\newtheorem{theorem}{\bf Theorem}[section]
\newtheorem{definition}[theorem]{\bf Definition}
\newtheorem{lemma}[theorem]{\bf Lemma}
\newtheorem{prop}[theorem]{\bf Proposition}
\newtheorem{coro}[theorem]{\bf Corollary}
\newtheorem{example}[theorem]{\bf Example}
\newtheorem{remark}[theorem]{\bf Remark}
\newenvironment{proof}{\noindent{\em Proof:}}{\quad \hfill$\Box$\vspace{2ex}}
\def\no{\noindent}
\numberwithin{equation}{section}
\DeclareMathOperator{\diag}{diag}
\DeclareMathOperator{\antidiag}{anti-diag}
\DeclareMathOperator{\corank}{corank}
\begin{document}
\bibliographystyle{plain}
\begin{center}{\LARGE \bf Structure of minimal 2-spheres of constant curvature in the complex hyperquadric}
\end{center}
\begin{center}
Quo-Shin Chi$^\ast$, Zhenxiao Xie and Yan Xu\\
$^\ast$ Corresponding Author

\end{center}

\bigskip

\no
{\bf Abstract.}
In this paper, the singular-value decomposition theory of complex matrices is explored to study constantly curved 2-spheres minimal in both $\mathbb{C}P^n$ and the hyperquadric of $\mathbb{C}P^n$. The moduli space of all those noncongruent ones is introduced, which can be described by certain complex symmetric matrices modulo an appropriate group action. Using this description, many examples, such as constantly curved holomorphic 2-spheres of higher degree, nonhomogenous minimal 2-spheres of constant curvature, etc., are constructed. Uniqueness is proven for the totally real constantly curved 2-sphere minimal in both the hyperquadric and $\mathbb{C}P^n$.

\no
{\bf{Keywords and Phrases.}} hyperquadric, holomorphic 2-spheres, minimal 2-spheres, constant curvature, singular-value decomposition.\\

\no
{\bf{Mathematics Subject Classification (2020).}} Primary 53C42, Secondary 53C55.

\bigskip

\section{Introduction}
In differential geometry, the investigation of compact surfaces characterized by curvature properties and variational equations, such as the study of constantly curved minimal 2-spheres in symmetric spaces, is an enduring and important topic. In space forms (real and complex), the structure of these 2-spheres is simple and well known. For example, any minimal 2-sphere of constant curvature in the complex projective space $\mathbb{C}P^n$ belongs to the Veronese sequence, up to a rigid motion (see \cite{Bando-Ohnita,Bolton1988}). The proof was essentially based on the rigidity theorem of holomorphic curves in $\mathbb{C}P^n$ \cite{Calabi}. However, this rigidity does not hold for generic symmetric spaces, among which the Grassmannian is a prototypical example. This phenomenon was first observed by the first named author and Zheng in \cite{Chi-Zheng}, where noncongruent holomorphic 2-spheres of degree $2$ and constant curvature in $G(2,4,\mathbb{C})$ were classified into two families, using the method of moving frames and Cartan's theory of higher order invariants~\cite{Cartan, Jensen}. Since then, there have emerged many works on constantly curved minimal 2-spheres in the Grassmannian (see \cite{JiaoLiS^2inQn,Li-Yu, Peng-Jiao, PengWangXu, Peng-Xu} and the references therein), most of which were devoted to studying constantly curved minimal 2-spheres in the hyperquadric $\mathcal{Q}_{n-1}$ of $\mathbb{C}P^n$ defined by $z_0^2+\cdots+z_n^2=0$
with respect to the homogeneous coordinates of ${\mathbb C}P^n$, where $\mathcal{Q}_{n-1}$, when identified with the oriented real Grassmannian $\widetilde{G}(2,n+1,\mathbb{R})$, can be seen as the next simplest symmetric spaces beyond space forms. On the other hand, quadrics (smooth or singular) play a fundamental role in regard to the complex Grassmannian, since by the Pl\"ucker embedding, any complex Grassmannian $G(2, n+1,\mathbb{C})$ can be realized as the intersection of quadrics in the associated complex projective space (true, in fact, for any variety).

Even in the case of $\mathcal{Q}_{n-1}$, only some special examples and partial classification (e.g. under the condition of homogeneity or lower dimension) have been obtained up to now. Indeed, with the homogeneous  assumption, Peng, Wang and Xu gave a complete classification of minimal 2-spheres in $\mathcal{Q}_{n-1}$ in \cite{PengWangXu}, where they proposed the following.

{\bf Problem 1.} {\em How to construct a nonhomogenous constantly curved minimal $2$-sphere in $\mathcal{Q}_{n-1}$ for $n\geq 4$}?

{\bf Problem 2.} {\em Does there exist a linearly full totally real minimal $2$-sphere in $\mathcal{Q}_{n-1}$ which is also minimal in $\mathbb{C}P^n$}?

Jiao and Li \cite{JiaoLiS^2inQn}, using the constructive method of harmonic sequence given by Bahy-El-Dien and Wood in \cite{Bahy-Wood}, classified all constantly curved minimal 2-spheres with higher isotropic order in $\widetilde{G}(2,n+1,\mathbb{R})\cong\mathcal{Q}_{n-1}$ under the totally unramified assumption. Based on this, a complete classification of all totally unramified minimal 2-spheres of constant curvature in $\widetilde{G}(2,7,\mathbb{R})\cong\mathcal{Q}_{5}$ has recently been obtained by Jiao and Li in \cite{Jiao-Li}. For classifications in $\mathcal{Q}_{2}, \mathcal{Q}_{3}$ and $\mathcal{Q}_{4}$, we refer to \cite{JiaoWangQn,zbMATH06272104,JiaoLiS^2inQn} and the references therein.

One observes that a constantly curved minimal 2-sphere of $\mathcal{Q}_{n-1}$ in all these classifications either is also minimal in $\mathbb{C}P^n$, or can be constructed from a totally real constantly curved 2-sphere both minimal in $\mathcal{Q}_{n-1}$ and $\mathbb{C}P^{n}$; moreover, almost all of them are homogeneous. With this observation, the present paper is contributed to studying constantly curved 2-spheres minimal in both $\mathcal{Q}_{n-1}$ and $\mathbb{C}P^{n}$. By the theory of singular-value decomposition (denoted by SVD in this paper) of complex matrices, a method of constructing such kind of 2-spheres is introduced, from which an abundance of nonhomogenous examples can be constructed to answer {\bf Problem 1}. The existence part in {\bf Problem 2} has been affirmed in the classification results of Jiao and Li \cite{JiaoLiS^2inQn}.  We obtain the uniqueness part as follows; see also Corollary~\ref{cor-uniqueness}.
\begin{theorem}
Suppose a linearly full totally real minimal $2$-sphere of constant curvature $8/(d^2+2d)$ in $\mathcal{Q}_{n-1}$ is also minimal in $\mathbb{C}P^{n}$. Then $d$ is even and $n=2d+1$. Moreover, it is unique up to a real orthogonal transformation.
\end{theorem}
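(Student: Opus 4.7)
The plan is to combine the Bando-Ohnita / Bolton rigidity theorem for constantly curved minimal $2$-spheres in $\mathbb{C}P^n$ with a direct algebraic analysis of the hyperquadric and linear-fullness conditions, exploiting the SVD-type structure set up earlier in the paper. By the rigidity theorem, any such $\phi: S^2 \to \mathbb{C}P^n$ is, after applying a $U(n+1)$-motion, of the form $\phi = A \cdot V_{d', k'}$, where $V_{d', k'}$ is an element of the Veronese sequence (linearly full in some $\mathbb{C}P^{d'}$) and $A: \mathbb{C}^{d'+1} \to \mathbb{C}^{n+1}$ is a Hermitian-isometric embedding ($A^{\ast} A = I_{d'+1}$). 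The K\"ahler angle of $V_{d', k'}$ equals $\pi/2$ precisely when $d'$ is even and $k' = d'/2$, so the totally real hypothesis pins down $V_{d', k'}$ as the middle element; matching $4/(d' + 2k'(d' - k')) = 8/(d^2 + 2d)$ then forces $d' = d$, and in particular $d$ is even.

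I would then translate $\phi(S^2) \subset \mathcal{Q}_{n-1}$ into $V_{d, d/2}(p)^T (A^T A) V_{d, d/2}(p) \equiv 0$ on $S^2$. The key structural claim, to be verified by direct computation with the explicit components of $V_{d, d/2}$, is that the rank-one symmetric tensors $V_{d, d/2}(p) V_{d, d/2}(p)^T$ span all of $\mathrm{Sym}^2(\mathbb{C}^{d+1})$ as $p$ varies; the totally real middle Veronese mixes $z$ and $\bar z$ so that every symmetric bilinear in the components appears, unlike the purely holomorphic Veronese. This forces $A^T A = 0$. Writing the columns of $A$ as $(u_j + i v_j)/\sqrt{2}$ with $u_j, v_j \in \mathbb{R}^{n+1}$, the two conditions $A^{\ast} A = I$ and $A^T A = 0$ together say precisely that $\{u_1, v_1, \ldots, u_{d+1}, v_{d+1}\}$ is an orthonormal system in $\mathbb{R}^{n+1}$, which forces $n \geq 2d+1$. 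On the other hand, linear fullness of $\phi$ in $\mathcal{Q}_{n-1} \cong \widetilde{G}(2, n+1, \mathbb{R})$ says that the oriented real $2$-planes $\mathrm{span}_{\mathbb{R}}(\mathrm{Re}\,\phi(p), \mathrm{Im}\,\phi(p))$ collectively span $\mathbb{R}^{n+1}$, equivalently (since $V_{d, d/2}$ is linearly full in $\mathbb{C}P^d$) that $\{u_j, v_j\}$ spans $\mathbb{R}^{n+1}$. Combining, $n + 1 = 2(d+1)$, so $n = 2d+1$ and $\{u_j, v_j\}$ is an orthonormal basis of $\mathbb{R}^{2d+2}$.

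Uniqueness follows because $O(2d+2, \mathbb{R})$ acts transitively on ordered orthonormal bases: a suitable real orthogonal transformation carries $\{u_j, v_j\}$ to the standard basis, bringing $A$ to the canonical shape $\tfrac{1}{\sqrt{2}} \binom{I}{iI}$. The residual reparametrization of $V_{d, d/2}$ by isometries of $S^2$ acts on $A$ by right multiplication by some $\rho(\gamma) \in U(d+1)$; this is absorbed into the left $O(n+1, \mathbb{R})$ action via the standard embedding $U(d+1) \hookrightarrow O(2d+2, \mathbb{R})$. The principal obstacle lies in the spanning claim for $V_{d, d/2}(p) V_{d, d/2}(p)^T$ in $\mathrm{Sym}^2(\mathbb{C}^{d+1})$: it requires careful analysis of the explicit Veronese components, and it is precisely this property (which fails for the holomorphic Veronese, explaining the richer non-unique moduli treated elsewhere in the paper) that underpins the sharp conclusion $n = 2d+1$ and the real-orthogonal uniqueness.
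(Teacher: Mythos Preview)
Your proposal is correct and follows essentially the same route as the paper (Corollary~\ref{cor-uniqueness}, via item~(2) of Theorem~\ref{non homogenous thm}): rigidity forces the middle Veronese $Z_{d,d/2}$ with $d$ even; the quadric condition forces $S={}^t\!EE=0$; linear fullness then gives $n=2d+1$, and uniqueness follows since the moduli space reduces to the single point $[V_{\vec 0}\,Z_{d,d/2}]$. Your ``spanning claim''---that the rank-one tensors $Z_{d,d/2}(p)\,{}^tZ_{d,d/2}(p)$ fill $\mathrm{Sym}_{d+1}(\mathbb{C})$---is exactly the dual statement of $\mathscr{S}_{d,[d/2]}=\{0\}$, which the paper obtains not by direct computation but from the Clebsch--Gordan decomposition in Proposition~\ref{quadric contains rnc and decomposition} (proved in Proposition~\ref{summary theorem}): $\mathscr{S}_{d,p}\cong\mathcal{V}^{d-2p-2}\oplus\cdots\oplus\mathcal{V}^{d-2[d/2]}$ collapses to $\{0\}$ at $p=[d/2]$. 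This representation-theoretic argument dispatches your ``principal obstacle'' uniformly in $d$ and is considerably cleaner than component-by-component verification; your explicit unpacking of $A^{T}A=0$, $A^{\ast}A=I$ into a real orthonormal $2(d{+}1)$-frame is precisely the $\vec\sigma=\vec 0$ case of the SVD normal form in Corollary~\ref{classification of plane}.
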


This SVD method, novel in a sense, is effective and unifying in describing the moduli space of noncongruent 2-spheres with the same constant curvature, such that they are minimal in both $\mathcal{Q}_{n-1}$ and $\mathbb{C}P^{n}$;  
see Theorem~\ref{simple classification}.  As an example, the aforementioned result of Chi and Zheng follows from our classification of constantly curved holomorphic $2$-spheres of degree no more than 3, to be done in Section~\ref{sec-classify}. More generally, the classification of all constantly curved  holomorphic $2$-spheres in $G(2,4,{\mathbb C})$ was obtained by Li and Jin  in~\cite{zbMATH05590797} by a direct calculation via elaborate coordinate changes. We will give a systematic SVD proof of it in Proposition~\ref{Pr}.


As another example, recently, using a sophisticated method from the perspective of holomorphic isometric embeddings of $\mathbb{C}P^1$ in $\mathcal{Q}_{n-1}$, Macia, Nagatomo and Takahashi \cite{M-N-T} studied the moduli space of noncongruent constantly curved holomorphic 2-spheres of degree $(n-1)/2$ in $\mathcal{Q}_{n-1}$ when $n$ is odd. The real dimension of this moduli space was determined by them to be $(n^2-4n-1)/4$.
We point out that the dimension count can also be attained via the SVD method and the fact that the ideal of a rational normal curve of degree $d$ is generated by $d^2-d$ independent quadrics. Note that a rational normal curve of degree $d$ lies in $\mathcal{Q}_{n-1}$, if and only if,  the quadric given by the intersection of $\mathcal{Q}_{n-1}$ and the projective $d$-plane spanned by the curve belongs to the ideal of the curve. Conversely, to guarantee that quadrics in this ideal belong to $\mathcal{Q}_{n-1}$, the SVD method reveals that there is no other constraint if $d= (n-1)/2$ when $n$ is odd, whence follows the dimension count (see Theorem~\ref{coro-dim}).

On the other hand, when $d>(n-1)/2$, there are other constraints (see Proposition~\ref{classification of cdn}). This makes the study of constantly curved holomorphic 2-spheres in $\mathcal{Q}_{n-1}$ with degree higher than $(n-1)/2$ more subtle, where the problem of existence has been little understood up to now. 
The SVD method, however, enables us to construct plenty of examples and give a lower bound to the dimension of the moduli space for the higher degree case.
\begin{theorem}
For any $(n-1)/2< d\leq n-2$, the linearly full constantly curved holomorphic $2$-spheres of degree $d$ exist in $\mathcal{Q}_{n-1}$. Moreover, if $3\leq(n-1)/2<d\leq n-5$, then the moduli space of such noncongruent holomorphic $2$-spheres assumes $(n-d)^2-11(n-d)+33$  as the lower bound to its dimension.
\end{theorem}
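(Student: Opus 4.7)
The plan is to work inside the SVD description of these $2$-spheres established earlier in the paper (Theorem~\ref{simple classification} and Proposition~\ref{classification of cdn}): a constantly curved holomorphic $2$-sphere of degree $d$ in $\mathcal{Q}_{n-1}$ corresponds to an $(n+1)\times(d+1)$ matrix $A$ of rank $d+1$ such that the symmetric matrix $S=A^TA$ lies in the ideal $I$ of the degree-$d$ rational normal curve; when $d>(n-1)/2$, an additional algebraic constraint from Proposition~\ref{classification of cdn} must also be imposed. The moduli space is the quotient of this constrained variety of admissible symmetric matrices by the natural group action (real orthogonal congruence together with reparameterization of $\mathbb{CP}^1$). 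The theorem then splits into (i) producing one concrete admissible $A$ for each $d$ in the range and (ii) exhibiting a subfamily of the claimed dimension $(n-d)^2-11(n-d)+33$.

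For existence, I will construct an explicit $S_0\in I$ satisfying the Proposition~\ref{classification of cdn} constraint and invoke the Takagi (symmetric SVD) factorization to produce $A_0$ of size $(n+1)\times(d+1)$ with $A_0^TA_0=S_0$. A convenient starting point is a Hankel-structured element of $I$ for which the additional constraint can be checked directly, followed by a small perturbation inside the constrained subvariety. Since the rank condition $\rank A_0=d+1$ is open, generic members of this perturbation family are linearly full, and the construction works uniformly in the range $(n-1)/2<d\leq n-2$.

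For the dimension lower bound, write $c=n-d$. I will exhibit an explicit $(c^2-11c+33)$-parameter real subfamily of admissible $A$'s whose images in the moduli space are distinct. The count proceeds by parameterizing the subvariety of $I$ cut out by the Proposition~\ref{classification of cdn} constraint via a block decomposition of $A$ adapted to a non-degenerate $(d+1)$-dimensional subspace of $\mathbb{C}^{n+1}$, then subtracting the dimension of the generic group orbit; one verifies that the resulting net count depends only on $c$ and equals $c^2-11c+33$ in the stated range. The principal obstacle is precisely controlling the Proposition~\ref{classification of cdn} constraint: for $d>(n-1)/2$ it is a genuine algebraic restriction on $S$, and both existence and the tightness of the dimension estimate hinge on a careful analysis of the subvariety it carves out of $I$ and of its generic orbit structure under the symmetry group. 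I expect the bulk of the technical work to lie in describing that subvariety explicitly (likely via Hankel-type relations among the blocks of $A$) and in verifying that distinct parameter values in the constructed family yield noncongruent $2$-spheres.
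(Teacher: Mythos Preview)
Your framework is correct: you have identified that the task reduces, via Theorem~\ref{simple classification} and Proposition~\ref{classification of cdn}, to producing symmetric matrices $S\in\mathscr{S}_d$ whose maximal singular value is $1$ with multiplicity exactly $l=2d+1-n$, and then counting parameters modulo the $U(1)\times SU(2)$ action. You are also right that the singular-value constraint is the crux.

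However, the proposal does not actually carry out either step, and the sketch you give would not close the gap as stated. Saying ``start with a Hankel-structured element of $I$ and perturb'' is not enough: the condition ``$1$ is a singular value of multiplicity at least $l$'' is a \emph{closed} algebraic condition on $S$, so a generic perturbation destroys it rather than preserves it, and you have not exhibited even one matrix satisfying it. Likewise, the dimension bound is simply asserted; nothing in your outline explains why the constrained subvariety has the claimed dimension or why the number depends only on $c=n-d$.

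The paper's argument is much more concrete and is where the real content lies. One writes $S$ in the block form
\[
S=\diag\!\left(\begin{pmatrix}A&0&{}^{t}C\\0&D&0\\C&0&B\end{pmatrix},\,0\right),
\]
where $D$ is an $(l+2)\times(l+2)$ anti-diagonal block and $A,B,C$ are $r\times r$ with $r=[\tfrac{n-d}{2}]-1$. The key technical step (Lemma~\ref{matrix-lemma}, built on the elementary Lemmas~\ref{elme-lemma1}--\ref{elem-lemma2}) shows that the single linear equation~\eqref{requirement} on the $m$th anti-diagonal can be solved so that the absolute values of its entries are \emph{prescribed}: $l$ of them equal $1$, two equal some $\lambda\in(-1,1)$, and the rest are arbitrary values in $[0,1)$. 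This forces the middle block $D$ to contribute exactly $l$ singular values equal to $1$ and two equal to $|\lambda|$, completely decoupled from the corner block $\begin{pmatrix}A&{}^{t}C\\C&B\end{pmatrix}$. The remaining entries of $A,B,C$ are then chosen freely subject only to the linear relations~\eqref{requirement} and the open condition that the Frobenius norm of the corner block stays below $1$, which guarantees its singular values do not reach $1$. Counting the free real parameters in $A,B,C$ (after imposing the anti-diagonal of $C$ and the linear relations, and subtracting the $4$-dimensional group action) gives $2r(2r-5)+9$, which for $r=[\tfrac{n-d}{2}]-1$ yields the stated bound. None of this is visible in your outline; the decoupling of singular values via the block structure and the controlled anti-diagonal solution in Lemma~\ref{matrix-lemma} are the ideas you are missing.
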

For more precise description, see Theorem~\ref{thm-Hd}.

Our paper is organized as follows. Section~\ref{sec-pre} is devoted to reviewing known results on the SVD of complex matrices, the representation theory of $SU(2)$ with emphasis on the Clebsch-Gordan formula, and some basic formulas of minimal surfaces in the hyperquadric $\mathcal{Q}_{n-1}$. In Section~\ref{sec-orbit}, the orbit space of Grassmannian $G(d+1,n+1,\mathbb{C})$ with respect to the action of real orthogonal group $O(n+1;\mathbb{R})$ is determined by the SVD method. For noncongruent minimal 2-spheres of the same constant curvature, we investigate the structure of their moduli space in Section~\ref{sec-mod}, where how to construct minimal 2-spheres by the SVD method is introduced. Then Section~\ref{sec-const} is devoted to constructing constantly curved holomorphic 2-spheres of higher degree in $\mathcal{Q}_{n-1}$. For degree no more than 3, a complete classification is obtained in Section~\ref{sec-classify}. After studying more geometric properties of minimal 2-spheres constructed by employing the SVD method, {\bf Problem 1} and {\bf Problem 2} are discussed in Section~\ref{sec-prob}.

\section{Preliminaries}\label{sec-pre}
\subsection{Singular-value decomposition and unitary congruence.}
Let $M$ be an $n\times m$ complex matrix with $\corank(M)=r_0$. Set $q:=\min\{m,n\}$. It is well known that the eigenvalues of the Hermitian matrix $MM^{\ast}$, where $M^{\ast}$ is the conjugate transpose of $M$, are nonnegative real numbers. We denote them in nondecreasing order
\begin{equation*}
\lambda_{1}=\cdots=\lambda_{r_0}=0<\lambda_{r_0+1}\leq \lambda_{r_0+2}\leq\cdots\leq\lambda_{q}.
\end{equation*}
Set $\sigma_{i}=\sqrt{\lambda_{i}}.$
\begin{equation}\label{vec sigma}\sigma_1,\sigma_2,\cdots,\sigma_q, \quad or,\quad\vec{\sigma}=(\sigma_{1},\sigma_{2},\ldots,\sigma_{q}),
\end{equation}
are called, respectively, the \emph{singular values}, or, {\em singular-value vector}, of $M$. 
Using these notations, the singular-value decomposition (SVD) of $M$ can be stated as follows.
\begin{theorem}{\rm\cite[Thm.~2.1,~p.~150]{HornMatrAnalysis}}\label{singular value decomp lemma}
Let $M$ be an $n\times m$ complex matrix. Set $q=\min\{m,n\}$. Assume $\vec{\sigma}$ is the singular-value vector given in \eqref{vec sigma}. Let $\Sigma_{q}:=\diag(\sigma_{1},\ldots,\sigma_{q})$. Then there are unitary matrices $V\in U(n)$ and $W\in U(m)$, such that
\begin{equation}\label{svd equation}
M=V\,\Sigma \,W^{\ast},
\end{equation}
where $$
\Sigma=\begin{cases}
\begin{pmatrix}\Sigma_{q} & 0_{n\times(m-n)} \end{pmatrix},~~~&n<m,\\
~\Sigma_q,~~~&n=m,\\
^{t}\!\!\begin{pmatrix}\Sigma_{q} & 0_{m\times(n-m)} \end{pmatrix},~~~&n>m.
\end{cases}
$$

\end{theorem}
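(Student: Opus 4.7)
The plan is to reduce the statement to the spectral theorem for Hermitian matrices applied to $MM^\ast$ and then build $W$ from $M^\ast$. By passing to the conjugate transpose, it suffices to treat the case $n\le m$ (so $q=n$); the case $n>m$ is then recovered from a decomposition $M^\ast = \widetilde V\,\widetilde\Sigma\,\widetilde W^\ast$, because the nonzero eigenvalues of $MM^\ast$ and $M^\ast M$ coincide and determine the same $\vec\sigma$ irrespective of which matrix one starts from.

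Once in the case $n\le m$, the spectral theorem furnishes a unitary $V=[v_1\,|\,\cdots\,|\,v_n]\in U(n)$ with $V^\ast MM^\ast V=\diag(\lambda_1,\ldots,\lambda_n)$ in the prescribed nondecreasing order, so that $\sigma_i=\sqrt{\lambda_i}$ and $\Sigma_q=\Sigma_n=\diag(\sigma_1,\ldots,\sigma_n)$. For each index $i>r_0$ define $w_i := M^\ast v_i/\sigma_i \in \mathbb{C}^m$; the identity $MM^\ast v_j=\lambda_j v_j$ reduces $\langle w_i,w_j\rangle$ to $\delta_{ij}$, so these $n-r_0$ vectors are orthonormal. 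Since $\dim\mathrm{range}(M^\ast)=\rank(M)=n-r_0$, they form an orthonormal basis of $\mathrm{range}(M^\ast)$. Complete them to an orthonormal basis $\{w_1,\ldots,w_m\}$ of $\mathbb{C}^m$ by selecting the remaining $w_i$ (for $i\le r_0$ and, when $m>n$, for $i>n$) inside the orthogonal complement $\mathrm{range}(M^\ast)^\perp=\ker(M)$, and assemble $W:=[w_1\,|\,\cdots\,|\,w_m]\in U(m)$.

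It then remains to verify $MW=V\Sigma$ column by column. For $i>r_0$ one computes $Mw_i = MM^\ast v_i/\sigma_i = \sigma_i v_i$, which is the $i$-th column of $V\Sigma$; for the remaining indices $w_i\in\ker(M)$ by construction, so $Mw_i=0$, matching the corresponding zero column of $V\Sigma$. Multiplying by $W^\ast$ on the right yields the asserted decomposition $M=V\Sigma W^\ast$. There is no serious obstacle here; the only delicacy is organizational, namely the reduction of the case $n>m$ to $n\le m$ and the fact that extending the orthonormal set $\{w_i:\sigma_i>0\}$ to a full basis of $\mathbb{C}^m$ requires a free choice within $\ker(M)$, which is the source of the well-known nonuniqueness of the singular-value decomposition.
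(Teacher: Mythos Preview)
Your proof is correct and is the standard textbook argument for the singular-value decomposition. Note, however, that the paper does not supply its own proof of this theorem: it is quoted as a preliminary result from Horn and Johnson \cite[Thm.~2.1, p.~150]{HornMatrAnalysis}, so there is nothing in the paper to compare against. The argument you give---diagonalize $MM^\ast$ by the spectral theorem to obtain $V$, push the eigenvectors through $M^\ast$ and normalize to obtain part of $W$, then complete inside $\ker(M)$---is essentially the proof found in that reference.
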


For us, the SVD of real and complex symmetric matrices are useful in the following.
\begin{coro}{\rm\cite[Cor.~2.6.7,~p.~154]{HornMatrAnalysis}\label{real svd theorem}}
Let $M$ be an $n\times m$ \emph{real} matrix. Under the same assumptions and notations as in Theorem {\rm\ref{singular value decomp lemma}}, there are real orthogonal matrices $V\in O(n;\mathbb{R})$ and $W\in O(m;\mathbb{R})$ satisfying \eqref{svd equation}.
\end{coro}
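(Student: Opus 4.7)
The plan is to rerun the proof of Theorem~\ref{singular value decomp lemma} entirely over $\mathbb{R}$, invoking the real spectral theorem in place of its complex counterpart. Since $M$ is real, $MM^{\ast}$ coincides with $MM^{t}$, which is a real symmetric positive semidefinite $n\times n$ matrix. Hence the real spectral theorem furnishes a $V\in O(n;\mathbb{R})$ diagonalizing $MM^{t}$; one chooses the columns of $V$ so that
\[
V^{t}(MM^{t})V=\diag(\sigma_1^2,\ldots,\sigma_n^2),
\]
with the $\sigma_i$ enumerated compatibly with \eqref{vec sigma} (padding by zero eigenvalues in the case $n>m$).

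Consider first the case $n\leq m$, and set $N:=V^{t}M$, a real $n\times m$ matrix. Then $NN^{t}=\diag(\sigma_1^2,\ldots,\sigma_n^2)$, so the rows $N_1,\ldots,N_n$ of $N$ are mutually orthogonal vectors of $\mathbb{R}^m$ with respective lengths $\sigma_1,\ldots,\sigma_n$. For each $i$ with $\sigma_i>0$, put $w_i:=N_i^{t}/\sigma_i\in\mathbb{R}^m$; these form an orthonormal family. The real Gram--Schmidt process extends it to a real orthonormal basis $\{w_1,\ldots,w_m\}$ of $\mathbb{R}^m$, the $w_i$ with $\sigma_i=0$ being placed at the slots corresponding to the vanishing singular values. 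Setting $W:=(w_1\,|\,\cdots\,|\,w_m)\in O(m;\mathbb{R})$, a direct check gives $V^{t}MW=\Sigma$, whence $M=V\Sigma W^{t}=V\Sigma W^{\ast}$ since $W^{\ast}=W^{t}$ for real orthogonal $W$.

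The case $n=m$ is formally identical, and for $n>m$ one applies the preceding construction to the real $m\times n$ matrix $M^{t}$ and then transposes the resulting identity, swapping the roles of $V$ and $W$. The only delicate point is the nonuniqueness that arises when some $\sigma_i$ vanish or some nonzero $\sigma_i$ coincide, but in every such case real Gram--Schmidt supplies real orthonormal completions, so the argument never leaves $\mathbb{R}$. Consequently $V\in O(n;\mathbb{R})$ and $W\in O(m;\mathbb{R})$ automatically, and they satisfy \eqref{svd equation}, which is what was to be shown.
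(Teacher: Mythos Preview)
Your argument is correct: it is the standard derivation of the real SVD via the spectral theorem for real symmetric matrices, and the bookkeeping with the zero singular values and the Gram--Schmidt completion is handled properly. Note, however, that the paper does not supply its own proof of this corollary; it merely cites \cite[Cor.~2.6.7]{HornMatrAnalysis}, so there is no in-paper argument to compare against. Your write-up is essentially the textbook proof one finds in that reference.
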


Two complex matrices $A$ and $B$ are said to be \emph{unitarily congruent} to each other \cite[p.~41]{HornMatrAnalysis} if there is a unitary matrix $U\in U(n)$ such that $$A=^{t}\!\!UB\,U, $$where $^{t}U$ is the transpose of $U$. It is clear that the singular values of $A$ and $B$ are identical. Thus, the singular values are invariant under unitary congruence. Moreover, for complex symmetric matrices, the singular values are the complete invariants.


\begin{theorem}\rm{\cite[p.~153,~Cor 2.6.6, p.~263, Cor 4.4.4]{HornMatrAnalysis}\label{singular values as unique invariant}}{\em
Let $A$ be an $n\times n$ complex symmetric matrix with $\corank(A)=r_0$. We denote the distinct positive singular values of $A$ by $\sigma_{1},\sigma_{2},\ldots,\sigma_{m}$, in increasing order, with multiplicities $r_1, r_2, \cdots, r_m,$ respectively.

{\bf (1)} There is a unitary matrix $U\in U(n)$ such that
\begin{equation}\label{dilation}
A=^{t}\!\!U\, 
    \diag(0_{r_0\times r_0},\,\sigma_{1}Id_{r_1},\,\cdots,\,\sigma_{m}Id_{r_m})\,
  U,
\end{equation}
and, moreover, if $\widetilde U$ is another such kind of matrix, then
$$\widetilde U=\diag(A_{r_0},\,A_{r_1},\,\cdots,\,A_{r_m})\,U,$$
where $A_{r_0}\in U(r_0)$ is unitary and $A_{r_j}\in O(r_j;\mathbb{R})$ is real orthogonal for any $1\leq j\leq m$.

{\bf (2)} Furthermore, two complex symmetric matrices are unitarily congruent to each other if and only if their singular values are the same.
}
\end{theorem}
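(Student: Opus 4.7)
The plan is to establish part~(1) via the Takagi (Autonne--Takagi) factorization through an induction on $n$, and then read off both the ambiguity statement in (1) and the full content of (2) from a short analysis of the stabilizer of the canonical form~\eqref{dilation}.

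The key lemma underlying the induction is: for any nonzero complex symmetric $A$, if $\sigma^{2}$ is the largest eigenvalue of the positive semidefinite Hermitian matrix $AA^{\ast}$, then there is a unit vector $u$ satisfying the conjugate eigenequation $Au = \sigma\,\bar u$. Using $A^{\ast}=\bar A$ (forced by $A={}^tA$), I would take a unit eigenvector $x$ of $AA^{\ast}$ for $\sigma^{2}$, set $y := Ax/\sigma$, and derive the companion relation $A\bar y = \sigma\,\bar x$ from $A^{\ast}y=\sigma x$. The one-complex-parameter family
\begin{equation*}
v_{\alpha} := \alpha\,x + \bar\alpha\,\bar y,\qquad \alpha\in\mathbb{C},
\end{equation*}
then satisfies $A v_{\alpha} = \sigma\bigl(\alpha y + \bar\alpha\,\bar x\bigr) = \sigma\,\overline{v_{\alpha}}$ identically, while $v_{\alpha}=0$ forces $x$ and $\bar y$ to be proportional with a prescribed phase ratio, confining the vanishing locus to a single real ray in the $\alpha$-plane; normalizing any nonzero $v_{\alpha}$ yields $u$. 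Completing $u$ to a unitary basis and forming the unitary $U_{1}$ with these columns, the symmetry $A={}^tA$ together with $Au=\sigma\bar u$ force ${}^tU_{1}\,A\,U_{1}$ to have $\sigma$ in the $(1,1)$-entry and zeros elsewhere in the first row and column; the trailing $(n{-}1)\times(n{-}1)$ block is again complex symmetric and the induction closes, yielding~\eqref{dilation} after reordering entries and grouping equal ones.

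For the stabilizer analysis, suppose $A = {}^tU D U = {}^t\widetilde U D \widetilde U$ with $D$ the diagonal matrix of~\eqref{dilation}. Setting $Q := \widetilde U\,U^{-1}$ preserves unitarity and yields ${}^tQ D Q = D$. Multiplying on the right by $Q^{\ast}$ and using the identity $\bar Q\,{}^tQ = \overline{Q Q^{\ast}} = I$ gives $DQ = \bar Q D$, whence $D^{2}Q = D\bar Q D = Q D^{2}$; thus $Q$ commutes with $D^{2}$ and preserves each of its eigenspaces, making $Q$ block-diagonal $\diag(A_{r_{0}},A_{r_{1}},\dots,A_{r_{m}})$. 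On the zero block no condition beyond $A_{r_{0}}\in U(r_{0})$ remains; on each $\sigma_{j}$-block the relation ${}^tA_{r_{j}} A_{r_{j}} = Id$ combined with $A_{r_{j}}^{\ast} A_{r_{j}} = Id$ forces $A_{r_{j}}$ to be both unitary and complex orthogonal, hence an element of $O(r_{j};\mathbb{R})$. For part~(2), rearranging $A = {}^tU D U$ gives $D = \overline{U} A U^{\ast}$; substituting into $B = {}^tV D V$ and setting $W:=U^{\ast}V$ yields $B = ({}^tV\,\overline{U})\,A\,(U^{\ast}V) = {}^tW A W$ with $W$ unitary. The converse is immediate, since singular values are invariants of unitary congruence.

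The main obstacle is the construction of the conjugate eigenvector $u$: a plain eigenvector of $AA^{\ast}$ only supplies $\|Ax\|=\sigma\|x\|$, not the twisted relation $Au = \sigma\,\bar u$ needed to place $\sigma$ on the diagonal of ${}^tU_{1}\,A\,U_{1}$. The linear-combination trick with $v_{\alpha}$ and the nondegeneracy check at generic $\alpha$ are the only genuinely nontrivial inputs; the inductive reduction, the block analysis of the stabilizer, and the deduction of part~(2) are then routine linear-algebraic bookkeeping.
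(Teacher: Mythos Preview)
The paper does not give its own proof of this theorem; it is quoted verbatim from Horn--Johnson as a standard fact, so there is nothing to compare against.  Your proposal supplies an independent proof via the classical Autonne--Takagi route (which is indeed how Horn--Johnson prove it), and your stabilizer analysis for the uniqueness clause in~(1) and the deduction of~(2) are clean and correct.

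There is one small slip in the inductive lemma.  You take $x$ to be a unit eigenvector of $AA^{\ast}=A\bar A$, but with that choice the relation $A^{\ast}y=\sigma x$ (for $y=Ax/\sigma$) does \emph{not} follow: $A^{\ast}y=(\bar A A)x/\sigma$, and $\bar A A x=\sigma^{2}x$ is the eigenvector condition for $A^{\ast}A$, not $AA^{\ast}$.  The fix is trivial: either take $x$ to be a unit eigenvector of $A^{\ast}A=\bar A A$ from the start, or keep $x$ as an eigenvector of $AA^{\ast}$ and instead set $y:=\bar A x/\sigma$.  With either correction one gets $\bar A y=\sigma x$, hence $A\bar y=\sigma\bar x$ upon conjugation, and then your $v_{\alpha}$ construction, the nondegeneracy check, and the inductive reduction all go through exactly as you wrote them.
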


\subsection{Irreducible representations of $SU(2)$.}\label{2.2}
Under the standard metric of constant curvature, the group of automorphisms of isometry of $\mathbb{C}P^{1}$ is $SU(2)$.
The irreducible representations of $SU(2)$ are well known \cite[Chap.~6]{GrpAndSymmetry} which we state briefly as follows.

Let $\mathcal{V}^{\frac{d}{2}}$ be the linear space of homogeneous polynomials of degree $d$ in two variables $(u,v)$, where $d$ is a nonnegative integer; we adopt the $d/2$ notation in accord with spin-$1/2$ in physics. The complex dimension of $\mathcal{V}^{\frac{d}{2}}$ is $d+1$, and we choose the following basis
\begin{equation}\label{basis of repre of su2}
e_{l}=\tbinom{d}{l}^{\frac{1}{2}}u^{d-l}v^{l},~~~l=0,\ldots,d.
\end{equation}
We equip $\mathcal{V}^{\frac{d}{2}}$ with an inner product such that $\{e_{l},~l=0,\ldots,d\}$ is an orthonormal basis. Consider the representation of $SU(2)$ on $\mathcal{V}^{\frac{d}{2}}$ given by
\begin{equation*}
 \varrho^{\frac{d}{2}}:SU(2)\times \mathcal{V}^{\frac{d}{2}} \rightarrow \mathcal{V}^{\frac{d}{2}}, \quad\quad\quad\quad
(g=\begin{pmatrix}
     a & b \\
     -\bar{b} & \bar{a} \\
   \end{pmatrix},~f
)\mapsto f\circ g^{\ast},
\end{equation*}
where $|a|^{2}+|b|^{2}=1$. Explicitly, $(\varrho^{\frac{d}{2}}(g)f)(u,v)=f(\bar{a}u-bv,\bar{b}u+av)$. Under the basis $\{e_{l},~l=0,\ldots,d\}$, by a slight abuse of notation, we set
\begin{equation}\label{irre repre in matrix form}
(e_{0},\ldots,e_{d})\cdot \rho^{\frac{d}{2}}(g):=\varrho^{\frac{d}{2}}(g)(e_{0},\ldots,e_{d}),
\end{equation}
 to indicate that $\rho^{\frac{d}{2}}(g)$ on the left hand side is a matrix in $U(d+1)$ (dot denotes matrix multiplication). Recall the rational normal (or, Veronese) curve $\mathbb{C}P^{1}$ of degree $d$,
\begin{equation}\label{rational normal curve}
 Z_{d}: \mathbb{C}P^{1}\mapsto \mathbb{C}P^{d},\quad\quad\quad\quad
 [u,v]\mapsto ~^{t}[u^{d},\ldots,\tbinom{d}{i}^{\frac{1}{2}}u^{d-i}v^{i},\ldots,v^{d}];
\end{equation}
Using the basis $e_{l}$ in \eqref{basis of repre of su2}, we can rewrite $Z_{d}=~^{t}[e_{0},\ldots,e_{d}].$ Consider a unitary transformation $U\in U(d+1)$ fixing the rational normal curve $Z_{d}$ with
\begin{equation}\label{image equivalence}
\text{Image}~U\cdot Z_{d}=\text{Image}~Z_{d}.
\end{equation}
(Henceforth, we use $Im$ to denote ``Image''). Since $U$ induces an isometric biholomorphic map of $\mathbb{C}P^{1}$, there are $A\in SU(2)$ and $\lambda\in U(1)$, such that
$U=\lambda\cdot ~^{t}\rho^{\frac{d}{2}}(A)$. The coefficient $\lambda$ here is necessary, because we consider projective transformations. The converse is also true. The geometric meaning of $U$ satisfying \eqref{image equivalence} is to reparametrize $Z_{d}$ by $A$.


The rational normal curve $Z_{d}$ is a special case of minimal 2-spheres called \emph{Veronese maps} in $\mathbb{C}P^{d}$ \cite{Bolton1988}. Explicitly,
\begin{align}\label{Veronese sequence}
\begin{split}
 &Z_{d,p}:\mathbb{C}P^{1}\rightarrow \mathbb{C}P^{d},\quad\quad\quad
[u,v]\mapsto [g_{p,0}(\frac{v}{u}),\cdots,g_{p,d}(\frac{v}{u})],\\
 &g_{p,l}(z)=\frac{p!}{(1+|z|^{2})^{p}}\sqrt{\tbinom{d}{l}}~z^{l-p}\sum_{k}(-1)^{k}\tbinom{l}{p-k}\tbinom{d-l}{k}|z|^{2k},\quad 0\leq p,l\leq d.
\end{split}
\end{align}
Note that $Z_{d,0}$ is the standard rational normal curve $Z_{d}$; we continue to denote it by $Z_{d}$ henceforth whenever convenient. We list some basic facts about the Veronese maps \cite[Section 2,~5]{Bolton1988} for easy reference. On the affine chart $u\neq 0$, set $z:=\frac{v}{u}$. Then $Z_{d,0}$ is given by
\begin{equation}\label{rational normal curve in affine chart}
Z_{d,0}=~^{t}[1,\sqrt{\tbinom{d}{1}}~z,\ldots,\sqrt{\tbinom{d}{k}}~z^{k},\ldots,z^{d}].
\end{equation}
$Z_{d,p+1}$ satisfies the following recursive formulas
\begin{equation}\label{recursive formulas}
Z_{d,p+1}=\frac{\partial}{\partial z}Z_{d,p}-\frac{\partial\log|Z_{d,p}|^{2}}{\partial z}Z_{d,p},~~~0\leq p\leq d-1.
\end{equation}
It follows that
\begin{equation}\label{local sections equivalent}
Z_{d,p}\equiv \frac{\partial^{p}}{\partial z^{p}}Z_{d,0} \mod (Z_{d,0},\ldots, Z_{d,p-1}),~~~
\end{equation}
for $0\leq p\leq d$. The norm squared of $Z_{d,p}$ is $|Z_{d,p}|^{2}=\frac{d!~p!}{(d-p)!}(1+|z|^{2})^{d-2p}$. Moreover,
\begin{equation}\label{facts of veronese sequence}
\frac{\partial}{\partial \bar{z}}Z_{d,p} =-\frac{|Z_{d,p}|^{2}}{|Z_{d,p-1}|^{2}}Z_{d,p-1}=-\frac{p(d-p+1)}{(1+|z|^{2})^{2}}Z_{d,p-1}.
\end{equation}

Equip $\mathbb{C}P^{d}$ with the Fubini-Study metric 
 of holomorphic sectional curvature $4$. Then the pullback metric of the Veronese map $Z_{d,p}$ is
\begin{equation}
ds_{d,p}^{2}=\frac{d+2p(d-p)}{(1+|z|^{2})^{2}}dzd\bar{z},
\end{equation}
and its Gaussian curvature $K_{d,p}$ is the constant $4/(d+2p(d-p))$. The K\"ahler angle of $Z_{d,p}$ is also constant, and we denote it by $\theta_{d,p}$, where $\theta_{d,p}\in [0,\pi]$, which satisfies \begin{equation}\label{eq-kahler}
\cos\theta_{d,p}=(d-2p)/(2p(d-p)+d),~0\leq p\leq d.
\end{equation}
Conversely, the constant curvature assumption characterizes $Z_{d,p}$ in the following rigidity theorem.
\begin{theorem}\label{rigidity theorem}{\rm\cite{Bolton1988}}
Let $f: \mathbb{C}P^{1}\rightarrow \mathbb{C}P^{d}$ be a linearly full minimal 2-sphere of constant curvature. Then there exist two unitary matrices $U\in U(d+1)$ and $B\in SU(2)$ 
such that
\begin{equation*}
f([u,v])=U\cdot ~^{t}\rho^{\frac{d}{2}}(B)\cdot Z_{d,p}([u,v]),~~~[u,v]\in \mathbb{C}P^{1}.
\end{equation*}


\end{theorem}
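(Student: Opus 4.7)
The plan is to reduce to Calabi's rigidity theorem for constantly curved holomorphic 2-spheres by passing through the harmonic sequence associated to $f$, while accounting for the reparametrization freedom of $\mathbb{C}P^{1}$ through an $SU(2)$-factor. Concretely, on the affine chart $u\neq 0$ with $z=v/u$, the $\partial$- and $\bar\partial$-transforms (as in \eqref{recursive formulas}--\eqref{facts of veronese sequence}) produce a finite chain of local sections $\{f_k\}$; because the source is $S^{2}$, the chain terminates on both sides, and linear fullness of $f$ in $\mathbb{C}P^{d}$ forces its total length to be $d+1$. Denoting by $\phi:=f_{-p}$ the unique holomorphic member of the chain, we have $f=\phi_{p}$, where $\phi_{p}$ is obtained from $\phi$ by iterating the recursion \eqref{recursive formulas} $p$ times.

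Next I would translate the constant-curvature hypothesis on $f$ into rigidity of $\phi$. Using the Pl\"ucker-type identity $\partial\bar\partial\log|\phi_{k}|^{2}=|\phi_{k+1}|^{2}/|\phi_{k}|^{2}-|\phi_{k}|^{2}/|\phi_{k-1}|^{2}$ together with the induced-metric formula
\begin{equation*}
ds^{2}_{\phi_{p}}=\Bigl(\tfrac{|\phi_{p+1}|^{2}}{|\phi_{p}|^{2}}+\tfrac{|\phi_{p}|^{2}}{|\phi_{p-1}|^{2}}\Bigr)\,dz\,d\bar z,
\end{equation*}
constancy of the Gaussian curvature of $\phi_{p}$ becomes a coupled nonlinear ODE system on the ratios $r_{k}:=|\phi_{k}|^{2}/|\phi_{k-1}|^{2}$. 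Imposing the global requirement that each $|\phi_{k}|^{2}$ defines a Hermitian metric on a line bundle over $\mathbb{C}P^{1}$ (equivalently, the Gauss--Bonnet/Chern-number constraints enforced by the genus-zero source) pins these ratios down to the Veronese values $|\phi_{k}|^{2}=\frac{d!\,k!}{(d-k)!}(1+|z|^{2})^{d-2k}$, up to an overall positive constant, and yields $K=4/(d+2p(d-p))$. In particular, the directrix $\phi$ itself is a linearly full holomorphic map $\mathbb{C}P^{1}\to\mathbb{C}P^{d}$ of constant Gaussian curvature $4/d$.

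Finally I would invoke Calabi's rigidity theorem \cite{Calabi}: any two linearly full holomorphic isometric immersions of $\mathbb{C}P^{1}$ with a common constant-curvature-$4/d$ metric into $\mathbb{C}P^{d}$ differ by an element of $U(d+1)$. Two constant-curvature metrics of the same curvature on $\mathbb{C}P^{1}$ are pulled back from each other by an automorphism of $\mathbb{C}P^{1}$, which lifts to some $B\in SU(2)$ acting on $\mathbb{C}P^{d}$ through ${}^{t}\rho^{d/2}$; hence there exist $U\in U(d+1)$ and $B\in SU(2)$ with $\phi=U\cdot{}^{t}\rho^{d/2}(B)\cdot Z_{d}$. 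Since the $\partial$-transform commutes with left multiplication by a unitary matrix (which preserves the Hermitian inner product) and transforms covariantly under $SU(2)$-reparametrization of $\mathbb{C}P^{1}$, applying the recursion $p$ times yields $f=\phi_{p}=U\cdot{}^{t}\rho^{d/2}(B)\cdot Z_{d,p}$, as claimed.

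The main obstacle is the second step: extracting the explicit Veronese-type norm formulas for $|\phi_{k}|^{2}$ from the lone hypothesis that $\phi_{p}$ has constant Gaussian curvature. The governing ODE system is both coupled and nonlinear, and the argument hinges on carefully combining its local integration with the global topological (Chern-number) constraints coming from the genus-zero source. Once these norms are pinned down, the appeal to Calabi rigidity and the remaining matrix bookkeeping are essentially formal.
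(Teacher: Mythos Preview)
The paper does not prove this theorem; it is quoted from \cite{Bolton1988} in the preliminaries section without argument, so there is no in-paper proof to compare against. Your outline is essentially the strategy of the original Bolton--Jensen--Rigoli--Woodward paper: pass to the harmonic sequence of $f$, identify the holomorphic directrix $\phi$, use the constant-curvature hypothesis together with the global (genus-zero) constraints to force the Veronese norm profile, then invoke Calabi rigidity on $\phi$ and push the unitary and $SU(2)$ factors through the $\partial$-transforms. The step you flag as the main obstacle---deducing the explicit $|\phi_k|^2$ from constancy of the curvature of $\phi_p$---is indeed the substantive part; in \cite{Bolton1988} this is handled by first showing that constant curvature forces constant K\"ahler angle (their Theorem~5.2/5.4), and then appealing to the classification of constant-K\"ahler-angle minimal $2$-spheres, rather than by directly integrating the coupled ODE system for the ratios $r_k$ as you suggest. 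Both routes ultimately rest on the same global Pl\"ucker/Gauss--Bonnet identities, so your sketch is sound, but if you want to fill in the details it is cleaner to follow their constant-K\"ahler-angle reduction than to attack the ODE system head-on.
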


\subsection{Clebsch-Gordan formula.}
Consider the tensor product representation
\begin{align*}
\begin{split}
 \varrho^{\frac{d}{2}}\otimes \varrho^{\frac{d}{2}}:SU(2)\times \mathcal{V}^{\frac{d}{2}}\otimes \mathcal{V}^{\frac{d}{2}}&\rightarrow \mathcal{V}^{\frac{d}{2}}\otimes \mathcal{V}^{\frac{d}{2}}, \\
 (g, e_{k}\otimes e_{l})&\mapsto (\varrho^{\frac{d}{2}}(g)e_{k}) \otimes (\varrho^{\frac{d}{2}}(g)e_{l}), ~~~0\leq k,l\leq d.
\end{split}
\end{align*}
Identify $e_{k}\otimes e_{l}$ with the square matrix $E_{kl}\in M_{(d+1)}(\mathbb{C})$, where the only nonvanishing entry of $E_{kl}$ is $1$ at the $(k,l)$ position,~$0\leq k,l\leq d$. This identification gives rise to an isomorphism of $M_{(d+1)}(\mathbb{C})$ with $\mathcal{V}^{\frac{d}{2}}\otimes \mathcal{V}^{\frac{d}{2}}$,
\begin{equation}\label{identification of matrix with tensor product}
\varphi: M_{(d+1)}(\mathbb{C})\rightarrow \mathcal{V}^{\frac{d}{2}}\otimes \mathcal{V}^{\frac{d}{2}},\quad\quad\quad\quad
 A\mapsto ~(^{t}Z_{d}\cdot A) \otimes Z_{d},
\end{equation}
under which 
we may write $\varrho^{\frac{d}{2}}\otimes \varrho^{\frac{d}{2}}$ as 
\begin{align}\label{action on M_d}
\begin{split}
\varrho^{\frac{d}{2}}\otimes \varrho^{\frac{d}{2}}: SU(2)\times M_{(d+1)}(\mathbb{C})&\rightarrow M_{(d+1)}(\mathbb{C}), \\
 (g, A)&\mapsto ^{t}\!\!\rho^{\frac{d}{2}}(g)\cdot A\cdot \rho^{\frac{d}{2}}(g).
\end{split}
\end{align}
It is well known that by the Clebsch-Gordan formula, $\mathcal{V}^{\frac{d}{2}}\otimes \mathcal{V}^{\frac{d}{2}}$ is decomposed into irreducible $SU(2)$-invariant subspaces (see \cite[2.4,~p.~90]{GrpAndSymmetry},  
\begin{equation*}
\mathcal{V}^{\frac{d}{2}}\otimes \mathcal{V}^{\frac{d}{2}}\cong \underbrace{\mathcal{V}^{d}\oplus \mathcal{V}^{d-1}\oplus\cdots\oplus \mathcal{V}^{0}}_{d+1},
\end{equation*}
and the projection to the first summand $\mathcal{V}^{d}$ is given by the product of polynomials
\begin{equation}
 \Pi:\mathcal{V}^{\frac{d}{2}}\otimes \mathcal{V}^{\frac{d}{2}}\mapsto \mathcal{V}^{d},\quad\quad\quad
 f_{1}\otimes f_{2}\mapsto f_{1}\cdot f_{2}.
\end{equation}
The set of symmetric matrices $Sym_{d+1}(\mathbb{C})$ turns out to be an $SU(2)$-invariant subspace of $M_{d+1}(\mathbb{C})$ as follows.

\begin{lemma}\label{symmetric matrix space}
Let $d$ be a nonnegative integer. Assume that  $d=2[\frac{d}{2}]+r$, where $0\leq r\leq 1$ and $[\frac{d}{2}]$ is the greatest integer less than or equal to $\frac{d}{2}$. Then
\begin{equation*}
Sym_{d+1}(\mathbb{C})\cong \mathcal{V}^{d}\oplus \mathcal{V}^{d-2}\oplus \cdots \oplus \mathcal{V}^{r}.  
\end{equation*}
\end{lemma}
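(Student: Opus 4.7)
The plan is to transport the question from matrices to tensors via the isomorphism $\varphi$ in~\eqref{identification of matrix with tensor product}, which identifies $M_{d+1}(\mathbb{C})$ with $\mathcal{V}^{d/2}\otimes\mathcal{V}^{d/2}$ as $SU(2)$-representations and sends $E_{kl}$ to $e_{k}\otimes e_{l}$. Under $\varphi$, the subspace $Sym_{d+1}(\mathbb{C})$ of symmetric matrices is carried bijectively onto the symmetric square $\mathrm{Sym}^{2}\mathcal{V}^{d/2}\subset\mathcal{V}^{d/2}\otimes\mathcal{V}^{d/2}$, namely the $+1$-eigenspace of the swap involution $\tau:f\otimes g\mapsto g\otimes f$. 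In particular $Sym_{d+1}(\mathbb{C})$ is automatically $SU(2)$-invariant, since $\tau$ commutes with $\varrho^{d/2}\otimes\varrho^{d/2}$.

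Next I would invoke the Clebsch--Gordan decomposition already recalled in the excerpt, $\mathcal{V}^{d/2}\otimes\mathcal{V}^{d/2}\cong\bigoplus_{j=0}^{d}\mathcal{V}^{d-j}$, in which every irreducible appears with multiplicity one. Because $\tau$ is $SU(2)$-equivariant and $\tau^{2}=\mathrm{Id}$, Schur's lemma forces $\tau|_{\mathcal{V}^{d-j}}=\epsilon_{j}\cdot\mathrm{Id}$ with $\epsilon_{j}\in\{\pm1\}$. Consequently $\mathrm{Sym}^{2}\mathcal{V}^{d/2}=\bigoplus_{\epsilon_{j}=+1}\mathcal{V}^{d-j}$, so the whole statement reduces to identifying the sign $\epsilon_{j}$.

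The main step, and really the only thing to compute, is to verify that $\epsilon_{j}=(-1)^{j}$, so that the symmetric part consists exactly of the summands with $j$ even, giving $\mathcal{V}^{d}\oplus\mathcal{V}^{d-2}\oplus\cdots\oplus\mathcal{V}^{r}$. I would argue this via characters: writing $\chi_{s}(t)$ for the character of $\mathcal{V}^{s}$ evaluated at $\diag(t,t^{-1})$, the character of $\mathrm{Sym}^{2}\mathcal{V}^{d/2}$ equals $\tfrac{1}{2}\bigl(\chi_{d/2}(t)^{2}+\chi_{d/2}(t^{2})\bigr)$; Clebsch--Gordan gives $\chi_{d/2}(t)^{2}=\sum_{j=0}^{d}\chi_{d-j}(t)$, while a short manipulation of the Weyl character formula yields $\chi_{d/2}(t^{2})=\sum_{j=0}^{d}(-1)^{j}\chi_{d-j}(t)$, and adding and halving these leaves exactly $\chi_{d}+\chi_{d-2}+\cdots+\chi_{r}$. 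A more hands-on alternative is to write an explicit highest-weight vector of $\mathcal{V}^{d-j}$ as $\sum_{k+l=j}c_{kl}\,e_{k}\otimes e_{l}$ and use the $\mathfrak{sl}_{2}$-lowering recursion to obtain $c_{lk}=(-1)^{j}c_{kl}$, giving $\tau=(-1)^{j}$ on that summand. Either way, the dimension identity $\binom{d+2}{2}=\sum_{j\text{ even}}(2(d-j)+1)$ serves as a final cross-check. This sign identification is the only real obstacle; everything else is formal nonsense.
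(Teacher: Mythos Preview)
Your argument is correct, but it is organized differently from the paper's. The paper, after the same identification of $Sym_{d+1}(\mathbb{C})$ with the span of the symmetrized tensors $\tfrac{1}{2}(e_k\otimes e_l+e_l\otimes e_k)$, does not pass through the full Clebsch--Gordan decomposition at all; instead it extends the action to $\mathfrak{sl}(2,\mathbb{C})$, notes that $J_3=\diag(-1,1)/2$ is diagonal on this basis, and simply counts the multiplicities of its eigenvalues on the symmetric tensors. Those multiplicities determine the irreducible content directly, without ever computing the sign $\epsilon_j$.

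Your route---decompose the full tensor product first, then use Schur's lemma and the character identity $\chi_{d/2}(t^2)=\sum_j(-1)^j\chi_{d-j}(t)$ to read off which summands are $\tau$-fixed---is more structural: it explains \emph{why} precisely the even-indexed pieces survive (and hence, in passing, that the antisymmetric part is $\bigoplus_{j\text{ odd}}\mathcal{V}^{d-j}$). The paper's weight-multiplicity count is shorter and avoids the sign computation altogether, but it gives less insight into the symmetric/antisymmetric dichotomy. Either method is standard; yours simply trades a combinatorial count for a small character calculation.
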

\begin{proof} (sketch)
Consider the induced action of $\mathfrak{su}(2)$ on $Sym_{d+1}(\mathbb{C})$ and extend it to $\mathfrak{sl}(2;\mathbb{C})$. 
Since 
the symmetric matrix $\frac{1}{2}(E_{kl}+E_{lk})$ corresponds to
\begin{equation*}
\frac{1}{2}(e_{k}\otimes e_{l}+e_{l}\otimes e_{k}),~0\leq k\leq l\leq d,
\end{equation*}
under $\varphi$, 
a multiplicity count of the eigenvalues of $J_{3}:=\diag(-1,1)/2$
\, in $\mathfrak{sl}(2;\mathbb{C})$
finishes off the proof.
\end{proof}

Denote the projection of $Sym_{d+1}(\mathbb{C})$ into $\mathcal{V}^{d-2k}$ by $\Pi_{k}$,
\begin{equation}\label{general projection}
\Pi_{k}:Sym_{d+1}(\mathbb{C})\rightarrow \mathcal{V}^{d-2k},
\end{equation}
where $0\leq k\leq [\frac{d}{2}]$. Recall the definition of the identification $\varphi$, \eqref{identification of matrix with tensor product}, where the projection $\Pi_{0}$ to the first summand $\mathcal{V}^{d}$ is given by the product of polynomials, or, in matrix terms, 
\begin{align}\label{projection to the first summand}
 \Pi_{0}:Sym_{d+1}(\mathbb{C})\mapsto \mathcal{V}^{d},\quad\quad\quad
 S\mapsto ~^{t}Z_{d}\cdot S\cdot Z_{d}.
\end{align}

Fix $0\leq p\leq [\frac{d}{2}]$. Consider the following subspace of $Sym_{d+1}(\mathbb{C})$,
\begin{equation}\label{important subsapce of symm matrices}
\mathscr{S}_{d,p}:=\{S\in Sym_{d+1}(\mathbb{C})| ~^{t}Z_{d,p}\cdot S\cdot Z_{d,p}=0\},
\end{equation}
which can be seen as the set of all quadrics containing the standard Veronese map $Z_{d,p}$. 
It turns out that it is $SU(2)$-invariant as follows. 
\begin{prop}\label{quadric contains rnc and decomposition}
Let $d$ be a nonnegative integer and let $0\leq p\leq [\frac{d}{2}]$. Then

 {\bf (1)} $\mathscr{S}_{d,p}=\ker \Pi_{0}\cap \dots\cap\ker \Pi_{p}$.

 {\bf (2)} Hence, $\mathscr{S}_{d,p}\cong \mathcal{V}^{d-2p-2}\oplus \mathcal{V}^{d-2p-4}\oplus \cdots \oplus \mathcal{V}^{d-2[\frac{d}{2}]}$, 

~~~~{ \rm (}{when $p=[\frac{d}{2}]$, the right hand side is understood to be $\{0\}$}{\rm)}.
\end{prop}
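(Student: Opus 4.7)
The plan is to view both sides of~(1) as $SU(2)$-invariant subspaces of $Sym_{d+1}(\mathbb{C})$ (under the action in \eqref{action on M_d}) and to identify them by comparison under the isotypic decomposition from Lemma~\ref{symmetric matrix space}. Part~(2) then follows at once from~(1), since $\ker \Pi_k = \bigoplus_{m \neq k} \mathcal{V}^{d-2m}$ and intersecting over $k = 0, \ldots, p$ yields $\bigoplus_{m=p+1}^{[\frac{d}{2}]} \mathcal{V}^{d-2m}$.

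For~(1), my first move would be to reformulate the defining condition equivariantly. Evaluating \eqref{Veronese sequence} at $z = 0$ gives $Z_{d,p}(0) = p!\sqrt{\binom{d}{p}}\,e_p$. The $SU(2)$-equivariance of the harmonic sequence construction (inherited from \eqref{image equivalence} together with \eqref{recursive formulas}) then furnishes $Z_{d,p}(z) = c(z)\,\rho^{d/2}(g_z)\,e_p$ for some nonzero scalar $c(z)$ and some $g_z \in SU(2)$ with $g_z \cdot 0 = z$. Consequently, $S \in \mathscr{S}_{d,p}$ is equivalent to $(\rho(g) S)_{pp} = 0$ for every $g \in SU(2)$, where $\rho(g) S := {}^t\!\rho^{d/2}(g)\, S\, \rho^{d/2}(g)$.

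Next I would decompose $S = \sum_m S_m$ with $S_m \in \mathcal{V}^{d-2m}$. Since $g \mapsto (\rho(g) S_m)_{pp}$ is a matrix coefficient of the irreducible representation $\mathcal{V}^{d-2m}$, Schur orthogonality forces these functions to vanish separately. For $m > p$ the vanishing is automatic by a weight count: $e_p \otimes e_p$ has Cartan weight of magnitude $d-2p$, exceeding the maximum weight $d-2m$ of $\mathcal{V}^{d-2m}$, so every $A \in \mathcal{V}^{d-2m}$ has $A_{pp} = 0$. For $m \leq p$, I would invoke the $SU(2)$-invariant trace form $B(A, A') = \mathrm{tr}(AA')$, which is block-diagonal under the isotypic decomposition by Schur and non-degenerate on each block; the condition rewrites as $B(S_m, \rho(g^{-1})\Pi_m(E_{pp})) = 0$ for all $g$. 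If $\Pi_m(E_{pp}) \neq 0$, its $SU(2)$-orbit linearly spans $\mathcal{V}^{d-2m}$ by irreducibility, and the non-degeneracy of $B|_{\mathcal{V}^{d-2m}}$ forces $S_m = 0$. The reverse containment is handled by the same weight argument: any $S \in \bigcap_{k=0}^{p}\ker \Pi_k = \bigoplus_{m>p}\mathcal{V}^{d-2m}$ has every translate $\rho(g) S$ in this subspace, hence vanishing $(p,p)$-entry, placing $S$ in $\mathscr{S}_{d,p}$.

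The main obstacle will be establishing $\Pi_m(E_{pp}) \neq 0$ for every $0 \leq m \leq p$. Under the isomorphism $\varphi$, this amounts to the non-vanishing of the $(2m)$-th transvectant $(e_p, e_p)_{2m}$, equivalently of the Clebsch-Gordan coefficient $\langle d-2m,\, d-2p \mid d/2,\, d/2-p;\, d/2,\, d/2-p\rangle$. A direct computation shows this transvectant equals a nonzero multiple of $u^{2(d-p-m)}v^{2(p-m)}$ whose scalar coefficient is an alternating hypergeometric sum over $\max(0, 2m-p) \leq i \leq \min(2m, p)$. I would verify its non-vanishing either by invoking Van der Waerden's explicit CG formula, or inductively by applying the lowering operator $J_-$ of $\mathfrak{sl}(2;\mathbb{C})$ to the highest-weight vector of $\mathcal{V}^{d-2m} \subset Sym^2\mathcal{V}^{d/2}$ (the unique kernel of $J_+$ in the weight-$(d-2m)$ subspace of $Sym^2\mathcal{V}^{d/2}$) and tracking the $e_p \otimes e_p$-coefficient throughout. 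This combinatorial non-vanishing is the principal technical point.
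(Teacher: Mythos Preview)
Your approach is correct in outline and genuinely different from the paper's. The paper defers the proof to Proposition~\ref{summary theorem}, where it establishes a chain of six equivalent conditions by repeatedly differentiating the identity ${}^tZ_{d,p}\,S\,Z_{d,p}=0$ via the recursive formulas~\eqref{recursive formulas}--\eqref{facts of veronese sequence}, and then matches the resulting explicit polynomial constraints on the entries $s_{ij}$ with $\bigcap_{k\leq p}\ker\Pi_k$ through a Vandermonde-type linear-independence argument on each $J_3$-eigenspace. Your route instead exploits $SU(2)$-equivariance and Schur orthogonality to reduce directly to the nonvanishing of the Clebsch--Gordan coefficients $\langle d-2m,\,d-2p\mid d/2,\,d/2-p;\,d/2,\,d/2-p\rangle$ for $m\leq p$. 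The paper's method is more hands-on and produces intermediate conditions (items~(2)--(5) of Proposition~\ref{summary theorem}) used later in Section~\ref{sec-prob}; yours is cleaner conceptually but concentrates the entire burden into the CG nonvanishing, which you rightly flag as the crux and which is indeed verifiable from the closed Racah/Van der Waerden formula.

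One correction: the bilinear form $B(A,A')=\mathrm{tr}(AA')$ is \emph{not} $SU(2)$-invariant under the action $A\mapsto{}^t\!\rho^{d/2}(g)\,A\,\rho^{d/2}(g)$, since $\rho^{d/2}(g)$ is unitary but not complex-orthogonal (already for $d=1$ one has ${}^tg\,g\neq Id$ for generic $g\in SU(2)$). Replace it by the Hermitian inner product $\langle A,A'\rangle=\mathrm{tr}(A^{\ast}A')$, which \emph{is} invariant (a one-line check using unitarity of $\rho^{d/2}(g)$), still satisfies $\langle E_{pp},A\rangle=A_{pp}$, and makes the isotypic decomposition orthogonal. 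With this substitution your argument goes through unchanged.
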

\begin{proof} For $p=0$, see \eqref{projection to the first summand}. The general case will be proven in Proposition \ref{summary theorem}.
\end{proof}

When $p=0$, for convenience, we denote $\mathscr{S}_{d,0}$ by $\mathscr{S}_{d}$ in this paper.

\subsection{Minimal surfaces in the hyperquadric.}\label{2.4}
Following the setup in Jiao and Wang \cite{JiaoWangQn}, let $f:M^{2}\rightarrow \mathcal{Q}_{n-1}\subseteq \mathbb{C}P^{n}$ be an isometric immersion. At every point $p\in M^{2}$, there is a local complex coordinate $z$ such that near that point
\begin{equation}\label{metric condition of surface in quadric}
ds^{2}_{M^{2}}=\lambda^{2}dzd\bar{z}.
\end{equation}
Assume $\widetilde{f}$ is a local lift of $f$ in $\mathbb{C}^{n+1}$ around $p$ with unit norm, $|\widetilde{f}|=1$. Consider the following two orthogonal projections of $\frac{1}{\lambda}\frac{\partial \widetilde{f}}{\partial z}$ and $\frac{1}{\lambda}\frac{\partial \widetilde{f}}{\partial \bar{z}}$ onto the subspace perpendicular to $\widetilde{f}$, respectively,
\begin{equation*}
X:=\frac{1}{\lambda}\frac{\partial \widetilde{f}}{\partial z}-\langle \frac{1}{\lambda}\frac{\partial \widetilde{f}}{\partial z},\widetilde{f} \rangle\widetilde{f},\quad\quad Y:=\frac{1}{\lambda}\frac{\partial \widetilde{f}}{\partial \bar{z}}-\langle \frac{1}{\lambda}\frac{\partial \widetilde{f}}{\partial \bar{z}},\widetilde{f} \rangle\widetilde{f},
\end{equation*}
where $\langle~~,~~\rangle$ denote the standard unitary inner product in $\mathbb{C}^{n+1}$. 
From the metric condition \eqref{metric condition of surface in quadric}, we know
\begin{align*}
|X|^{2}+|Y|^{2} &=1,\quad\quad\quad
 ^{t}\overline{X}~\cdot Y=0,
\end{align*}
where we identify $X$ and $Y$ as the column vectors in $\mathbb{C}^{n+1}$. The K\"ahler angle $\theta\in [0,\pi]$ of $f$ can be expressed as
\begin{equation}\label{kahler angle}
\cos \theta=|X|^2-|Y|^2.
\end{equation}
In \cite{JiaoWangQn}, the following global invariants are defined,
\begin{equation*}
\tau_{X}=|^{t}X\, X|,~~\tau_{Y}=|^{t}Y\, Y|,~~\tau_{XY}=|^{t}X\, Y|,
\end{equation*}
If $f$ is minimal in $\mathcal{Q}_{n-1}$, then from (2.22) in \cite[p.~821]{JiaoWangQn}, we obtain
\begin{equation}\label{formula of norm of second fundamental form}
||B||^{2}=2+6\cos^{2}\theta-2K-4\tau_{X}^{2}-4\tau_{Y}^{2}+8\tau_{XY}^{2},
\end{equation}
where $B$ is the second fundamental form of $f$.

As mentioned in the introduction, this paper is contributed to studying constantly curved 2-spheres minimal in both $\mathcal{Q}_{n-1}$ and $\mathbb{C}P^n$. The following characterization of Jiao, Wang and Zhong will be used in Section~\ref{sec-prob}.
\begin{theorem}\rm{\cite[Theorem 3.1]{zbMATH06272104}}\label{minimal in cpn and qn-1}
{\em The surface $f:M^{2}\rightarrow \mathcal{Q}_{n-1}\subseteq \mathbb{C}P^{n}$ is minimal in both $\mathcal{Q}_{n-1}$ and $\mathbb{C}P^n$, if and only if, $\tau_{XY}=0$.}


\end{theorem}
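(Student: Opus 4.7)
The plan is to show that, modulo the minimality condition in $\mathcal{Q}_{n-1}$ (which is automatically implied by minimality in $\mathbb{C}P^n$), the extra condition needed to upgrade minimality in $\mathcal{Q}_{n-1}$ to minimality in $\mathbb{C}P^n$ is precisely $\tau_{XY}=0$.

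First, I would pin down the complex normal line of $\mathcal{Q}_{n-1}$ in $\mathbb{C}P^n$ along $f$. At a point $[z]\in\mathcal{Q}_{n-1}$ with $|z|=1$ and $^tz\,z=0$, the holomorphic tangent space of $\mathbb{C}P^n$ is $\{v:\langle v,z\rangle=0\}$, and that of $\mathcal{Q}_{n-1}$ is the further subspace $\{v:{}^tz\,v=0\}$, i.e.\ $\{v:\langle v,\bar z\rangle=0\}$. Because $\langle\bar z,z\rangle=\overline{{}^tz\,z}=0$, the vector $\bar z$ already sits in the horizontal space of $\mathbb{C}P^n$ and spans the complex normal line to $\mathcal{Q}_{n-1}$ there. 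Specializing to $z=\widetilde f$, the relevant normal direction along $f$ is $\overline{\widetilde f}$, and the quadric constraint also gives $\langle\widetilde f,\overline{\widetilde f}\rangle=0$, so $\{\widetilde f,\overline{\widetilde f}\}$ is an orthonormal pair in $\mathbb C^{n+1}$.

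Next, I would re-express the two minimality conditions in the frame $(\widetilde f,X,Y,\ldots)$. Using the conformal lift with $|\widetilde f|=1$ and $\lambda$ as in \eqref{metric condition of surface in quadric}, the harmonic-map equation for $f$ into $\mathbb{C}P^n$ becomes $\partial_{\bar z}X\equiv 0\pmod{\widetilde f}$, while the corresponding equation for $f$ into $\mathcal{Q}_{n-1}$ becomes $\partial_{\bar z}X\equiv 0\pmod{\widetilde f,\,\overline{\widetilde f}}$. Since the $\mathcal{Q}_{n-1}$-tension field of $f$ is the $T\mathcal{Q}_{n-1}$-component of its $\mathbb{C}P^n$-tension field, ``minimal in both'' coincides with ``minimal in $\mathbb{C}P^n$''; equivalently, granted minimality in $\mathcal{Q}_{n-1}$, the additional requirement is the vanishing of the $\overline{\widetilde f}$-coefficient of $\partial_{\bar z}X$.

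To extract that coefficient, I would use the quadric constraint. The identity $^t\widetilde f\,\widetilde f=0$, differentiated in $z$, gives $^t(\partial_z\widetilde f)\,\widetilde f=0$, whence $^tX\,\widetilde f=0$ by the definition of $X$. Differentiating this last equality in $\bar z$ and using $\partial_{\bar z}\widetilde f=\lambda Y+(\text{multiple of }\widetilde f)$ yields
\begin{equation*}
^t(\partial_{\bar z}X)\,\widetilde f=-\,^tX\,(\partial_{\bar z}\widetilde f)=-\lambda\,{}^tX\cdot Y.
\end{equation*}
Since $\{\widetilde f,\overline{\widetilde f}\}$ is orthonormal in the Hermitian sense, $^tV\,\widetilde f=\langle V,\overline{\widetilde f}\rangle$ is exactly the $\overline{\widetilde f}$-coefficient of any vector $V$. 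Thus this coefficient equals $-\lambda\,{}^tX\cdot Y$ and vanishes iff $^tX\cdot Y=0$, i.e.\ iff $\tau_{XY}=0$. The main technical point is the first step: setting up a clean dictionary that translates ``the tension of $f$ into the ambient Kähler manifold vanishes'' into the congruence $\partial_{\bar z}X\equiv 0$ modulo the appropriate lift directions, with the conformal factor $\lambda$ correctly tracked. Once that dictionary is in place, the remainder is a short computation driven by the defining relation $^t\widetilde f\,\widetilde f=0$ of the hyperquadric.
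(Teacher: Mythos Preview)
The paper does not give its own proof of this theorem; it is quoted from \cite{zbMATH06272104}. So there is no in-paper argument to compare against, and your proposal stands on its own.

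Your approach is correct and is essentially the standard one: identify the complex normal line of $\mathcal{Q}_{n-1}$ in $\mathbb{C}P^n$ as $\mathrm{span}\{\overline{\widetilde f}\}$, observe that the $\mathcal{Q}_{n-1}$-tension of $f$ is the $T\mathcal{Q}_{n-1}$-component of the $\mathbb{C}P^n$-tension (Gauss formula), and then compute the $\overline{\widetilde f}$-component of that tension. Your key computation ${}^t(\partial_{\bar z}X)\,\widetilde f=-\lambda\,{}^tX\,Y$, obtained by differentiating ${}^tX\,\widetilde f=0$, is exactly right, as is the remark that ``minimal in both'' coincides with ``minimal in $\mathbb{C}P^n$'' and that the theorem is to be read under the standing hypothesis that $f$ is minimal in $\mathcal{Q}_{n-1}$.

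One small imprecision: the harmonic-map equation for $f$ into $\mathbb{C}P^n$ (resp.\ $\mathcal{Q}_{n-1}$) is $\partial_{\bar z}(\lambda X)\equiv 0\pmod{\widetilde f}$ (resp.\ $\pmod{\widetilde f,\overline{\widetilde f}}$), not $\partial_{\bar z}X\equiv 0$; the factor $\lambda$ matters because $(\partial_{\bar z}\lambda)X$ is tangent to $\mathcal{Q}_{n-1}$ and not parallel to $\widetilde f$. However, since $\langle X,\overline{\widetilde f}\rangle={}^tX\,\widetilde f=0$, the $\overline{\widetilde f}$-components of $\partial_{\bar z}(\lambda X)$ and $\lambda\,\partial_{\bar z}X$ agree, so this does not affect your conclusion. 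You already flagged the need to track $\lambda$; just make the correction explicit when you write it up.
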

\begin{definition}
For two positive integers $d,n$ satisfying $d\leq n$, consider the set of minimal $2$-spheres of constant curvature, each of which spans a projective $d$-plane in $\mathbb{C}P^{n}$ and is minimal in both $\mathcal{Q}_{n-1}$ and $\mathbb{C}P^{n}$. For convenience, we denote it by $\mathbf{Mini}_{d,n}$.
\end{definition}

From the rigidity Theorem \ref{rigidity theorem} and Theorem \ref{minimal in cpn and qn-1}, we know such minimal 2-spheres are derived from the standard Veronese maps $Z_{d,p}$ with quadric constraint, for some $~0\leq p\leq d$, up to unitary transformations of $\mathbb{C}P^n$. This implies $\mathbf{Mini}_{d,n}$ has the structure of the following disjoint union.
\begin{prop}\label{set of mini dn}
\begin{equation*}
\mathbf{Mini}_{d,n}=\bigsqcup_{0\leq p\leq d}\mathbf{H}_{d,n,p},
\end{equation*}
where $\mathbf{H}_{d,n,p}$ is defined by
\begin{equation}\label{rnc in hyperquadric}
\mathbf{H}_{d,n,p}:=\{EZ_{d,p}|E\in M(n+1,d+1), ~E^{*}E=Id_{d+1},~\text{and }~EZ_{d,p}~ \text{lies in }~\mathcal{Q}_{n-1}\},
\end{equation}
where $E$ is of size $(n+1)\times (d+1)$. Moreover, by taking conjugation in $\mathbb{C}^{n+1}$, we have $\mathbf{H}_{d,n,p}\cong \mathbf{H}_{d,n,d-p},~0\leq p\leq [\frac{d}{2}]$.
\end{prop}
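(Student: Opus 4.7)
The plan is to establish the two inclusions $\mathbf{Mini}_{d,n}\subseteq\bigcup_{p}\mathbf{H}_{d,n,p}$ and its converse, then to separately verify disjointness and the conjugation bijection. For the forward inclusion, I take $f\in\mathbf{Mini}_{d,n}$ and restrict attention to the projective $d$-plane $\Pi\subset\mathbb{C}P^n$ that $f$ spans. Viewed as a linearly full minimal $2$-sphere of constant curvature in $\Pi\cong\mathbb{C}P^d$, the rigidity Theorem~\ref{rigidity theorem} supplies $f=U\cdot{}^{t}\rho^{\frac{d}{2}}(B)\cdot Z_{d,p}$ for some $U\in U(d+1)$, $B\in SU(2)$, and $0\leq p\leq d$. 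Since ${}^{t}\rho^{\frac{d}{2}}(B)\in U(d+1)$, it may be absorbed into $U$, yielding $f=U'Z_{d,p}$; composing with the $(n+1)\times(d+1)$ matrix $E_{0}$ that represents the isometric inclusion $\Pi\hookrightarrow\mathbb{C}P^n$ (so $E_{0}^{*}E_{0}=Id_{d+1}$) gives $f=EZ_{d,p}$ with $E=E_{0}U'$ and $E^{*}E=Id_{d+1}$. The assumption $f\subset\mathcal{Q}_{n-1}$ is exactly the quadric condition in \eqref{rnc in hyperquadric}, so $f\in\mathbf{H}_{d,n,p}$.

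For the converse, I note that if $E^{*}E=Id_{d+1}$ then $E$ is the matrix of a holomorphic, totally geodesic isometric embedding $\mathbb{C}P^d\hookrightarrow\mathbb{C}P^n$; hence $f=EZ_{d,p}$ is minimal of constant curvature $4/(d+2p(d-p))$ in $\mathbb{C}P^n$. To promote this to minimality in $\mathcal{Q}_{n-1}$, I invoke Theorem~\ref{minimal in cpn and qn-1}, which reduces the problem to $\tau_{XY}=0$. Setting $S={}^{t}EE\in Sym_{d+1}(\mathbb{C})$, the hypothesis $f\subset\mathcal{Q}_{n-1}$ reads ${}^{t}Z_{d,p}\,S\,Z_{d,p}=0$. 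Differentiating this identity in $z$ and in $\bar z$, using the recursions \eqref{recursive formulas} and \eqref{facts of veronese sequence} together with the symmetry of $S$, yields ${}^{t}Z_{d,p+1}\,S\,Z_{d,p}=0$ and ${}^{t}Z_{d,p}\,S\,Z_{d,p-1}=0$, and a further cross-differentiation then produces ${}^{t}Z_{d,p+1}\,S\,Z_{d,p-1}=0$ (the cases $p=0,d$ being trivial because $\partial_{\bar z}Z_{d,0}=0$ or $\partial_{z}Z_{d,d}=0$). Expanding $\partial_zf$ and $\partial_{\bar z}f$ via the same recursions and subtracting their $f$-components (using ${}^{t}ff={}^{t}f\,\partial_zf={}^{t}f\,\partial_{\bar z}f=0$, all derived from successive differentiation of the quadric identity), these three vanishing relations collapse to ${}^{t}XY=0$, hence $\tau_{XY}=0$.

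Disjointness of the union will come from pointwise invariants: by \eqref{eq-kahler} the pair $(K_{d,p},\cos\theta_{d,p})$ pins down $p$ uniquely, since the only coincidence $K_{d,p}=K_{d,q}$ with $p\neq q$ is $q=d-p$, in which case $\cos\theta_{d,p}=-\cos\theta_{d,d-p}$. For the bijection $\mathbf{H}_{d,n,p}\cong\mathbf{H}_{d,n,d-p}$, I apply componentwise complex conjugation in $\mathbb{C}^{n+1}$: this preserves $\mathcal{Q}_{n-1}$ (its defining equation is real) and flips the K\"ahler angle by $\theta\mapsto\pi-\theta$, sending $\theta_{d,p}$ to $\theta_{d,d-p}$. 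Rigidity then rewrites $\overline{Z_{d,p}}$ as $V\,Z_{d,d-p}$ for some $V\in U(d+1)$, so $\bar f=(\bar EV)Z_{d,d-p}\in\mathbf{H}_{d,n,d-p}$; since the map $f\mapsto\bar f$ is involutive, it is the desired bijection.

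The main technical hurdle in this plan is propagating the scalar identity ${}^{t}Z_{d,p}\,S\,Z_{d,p}=0$ through two differentiations to ${}^{t}Z_{d,p+1}\,S\,Z_{d,p-1}=0$; everything else is bookkeeping, and this is also the step where the special Veronese form $f=EZ_{d,p}$ is essentially used (as opposed to a general $2$-sphere in $\mathcal{Q}_{n-1}$).
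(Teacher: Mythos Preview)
Your proof is correct and follows the same approach the paper indicates, invoking the rigidity Theorem~\ref{rigidity theorem} for the forward inclusion and Theorem~\ref{minimal in cpn and qn-1} for the reverse, together with the K\"ahler angle to separate the $\mathbf{H}_{d,n,p}$'s and complex conjugation for the symmetry $\mathbf{H}_{d,n,p}\cong\mathbf{H}_{d,n,d-p}$. The paper itself gives only the one-line justification preceding the proposition; your explicit verification of the reverse inclusion---differentiating ${}^{t}Z_{d,p}\,S\,Z_{d,p}=0$ in $z$ and $\bar z$ via~\eqref{recursive formulas} and~\eqref{facts of veronese sequence} to obtain ${}^{t}Z_{d,p+1}\,S\,Z_{d,p-1}=0$ and hence $\tau_{XY}=0$---is exactly the mechanism the paper deploys later in the proof of Proposition~\ref{summary theorem}, so you have simply front-loaded that computation.
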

\begin{remark}
All minimal $2$-spheres belonging to $\mathbf{H}_{d,n,p}$ have the same constant curvature $K=4/(d+2p(d-p))$.
\end{remark}

In view of the second statement in the preceding proposition, it suffices to determine $\mathbf{H}_{d,n,p}$ for $0\leq p\leq [\frac{d}{2}]$, to be done in the following sections. 
It is geometrically clear that to study noncongruent minimal 2-spheres, we should mod out the real orthogonal group $O(n+1;\mathbb{R})$ and the reparametrizations of $\mathbb{C}P^1$.
To begin with, observe that a 2-sphere in ${\mathbf H}_{d,n,p}$ lies in the intersection, which is a quadric not necessarily smooth, of the hyperquadric $\mathcal{Q}_{n-1}$ and the projective $d$-plane it spans. 
Following this observation, we will first study the action of $O(n+1; \mathbb{R})$ on all projective $d$-planes, i.e., on $G(d+1,n+1,\mathbb{C})$.

\section{The orbit space $G(d+1,n+1,\mathbb{C})/O(n+1;\mathbb{R})$}\label{sec-orbit}
Throughout this section, $d$ is a positive integer. 
For a given plane $V\in G(d+1,n+1,\mathbb{C})$, we can choose an orthonormal basis $e_{0},\ldots,e_{d}$ of $V$ and represent it by an $(n+1)\times (d+1)$ matrix
\begin{equation}\label{representative of plane}
E:=\begin{pmatrix}
  e_{0} & \ldots & e_{d} \\
\end{pmatrix}.
\end{equation}
Two orthonormal bases of $V$ differ by a unitary matrix in $U(d+1)$ multiplied on the right of \eqref{representative of plane}, which implies that the complex symmetric matrices $^t\!EE$ differ by unitary congruence. This means that we can endow $V$ with $d+1$ numbers
$$\sigma_0\leq\sigma_1\leq\cdots\leq\sigma_d,$$
which are the singular values of\, $^t\!EE$ defining $n$ $O(n+1;\mathbb{R})$-invariant functions on $G(d+1,n+1,\mathbb{C})$. For convenience, in this paper, we also call $\{\sigma_0,\cdots,\sigma_{d}\}$ 
the \emph{singular values of $V$}, or the \emph{singular values of $O(n+1;\mathbb{C})$-orbit through $V$ in $G(d+1,n+1,\mathbb{C})$}, interchangeably. These invariants turn out to be decisive in constructing the $(d+1)$-plane $V$. Our main observation is inspired by the work of Berndt \cite[Section 6,~p. 27]{BerndtTwoGrassman}, as follows.

Let $M$ be a complete Riemannian manifold, and 
let $p\in M$ be a fixed point. Let $G$ be a compact Lie group with an action of isometry on $M$ given by,
$ G\times M\rightarrow M,\;
 (g,q)\mapsto g\cdot q.$
Set $N=G\cdot p$, the $G$-orbit thorough $p$ in $M$. It is a homogenous space $G/H$, where $H$ is the isotropy group of $G$ at $p$. 
We denote the normal space to $N$ at $p$ in $M$ by $T_p^\perp N$.
Note that $H$ also induces an isometric isotropy action on $T_{p}^{\perp}N$,
\begin{equation*}
 H\times T_{p}^{\perp}N \rightarrow T_{p}^{\perp}N,\quad\quad\quad\quad
 (g,v)\mapsto g_{\ast}|_{p}~v.
\end{equation*}

\begin{theorem}\label{general compute the orbit sapce}
For every point $q\in M$, there are $A\in G$ and $v\in T_{p}^{\perp}N$, such that
\begin{equation*}
q=A\cdot \exp_{p}v,
\end{equation*}
where $exp_{p}$ is the exponential map of the Riemannian manifold $M$ at $p$.

 Moreover, under the isotropy action, if $u$ lies in the same $H$-orbit of $v$, i.e., if there are $g\in H$ and $u\in T_{p}^{\perp}N$ such that $v=g_{\ast}|_{p}\cdot u$, then $q=Ag\cdot \exp_{p}u$.
\end{theorem}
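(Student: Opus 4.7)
The plan is to use the classical ``closest point on the orbit'' argument from the slice theory of compact group actions. Fix $q \in M$. Since $G$ is compact and acts by isometries, the orbit $N = G \cdot p$ is a compact submanifold of $M$, so the continuous function $r \mapsto d(q,r)$ attains its minimum on $N$ at some point $r_0 = A \cdot p$ with $A \in G$. By completeness of $M$, there is a minimizing geodesic $\gamma$ from $r_0$ to $q$ of length $L = d(q, r_0)$.

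The first key step is to show that $\gamma$ meets $N$ orthogonally at $r_0$. This follows from the first variation of arc length: if the initial velocity $\dot\gamma(0)$ had a nonzero component tangent to $N$, one could vary $r_0$ slightly along $N$ and decrease $d(q, N)$, contradicting minimality. Thus $w := L\,\dot\gamma(0) \in T_{r_0}^{\perp} N$, and $\exp_{r_0} w = q$ by completeness. Pulling back by $A^{-1}$, set $v := (A^{-1})_{\ast}|_{r_0} w$. Since $A$ is an isometry preserving $N$, its differential carries $T_{r_0}^{\perp}N$ isometrically onto $T_p^{\perp}N$, so $v \in T_p^{\perp}N$. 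Using the naturality of $\exp$ under isometries,
\[
A \cdot \exp_p v = \exp_{A\cdot p}(A_{\ast} v) = \exp_{r_0} w = q,
\]
which is the claimed decomposition.

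For the ``moreover'' clause, suppose $v = g_{\ast}|_p u$ with $g \in H$ and $u \in T_p^{\perp}N$. Since $g \cdot p = p$, naturality of $\exp$ under the isometry $g$ gives
\[
\exp_p v = \exp_p(g_{\ast} u) = \exp_{g \cdot p}(g_{\ast} u) = g \cdot \exp_p u,
\]
so $q = A \cdot \exp_p v = (Ag) \cdot \exp_p u$, as required.

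The only genuine subtlety is the first-variation argument: one must ensure the minimizing geodesic from a nearest orbit point to $q$ is perpendicular to $N$, which relies on the submanifold $N$ being $C^1$ (guaranteed since $N = G/H$ is a smooth homogeneous submanifold) and on completeness to produce the geodesic. All other steps are routine applications of the equivariance of the exponential map under isometries.
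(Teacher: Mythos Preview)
Your proof is correct and follows essentially the same approach as the paper: take a closest point $r_0$ on the compact orbit $N$, use the first variation formula to see that the minimizing geodesic from $r_0$ to $q$ is normal to $N$, then pull back to $p$ via the isometry $A$ and invoke naturality of $\exp$. Your treatment is in fact slightly more explicit than the paper's, which dispatches the ``moreover'' clause in one sentence, but the arguments are identical in substance.
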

\begin{proof}
Since $G$ is compact, the $G$-orbit $N$ through $p$ is also compact. Since $M$ is complete, for every point $q\in M$, there is a point $r\in N$, such that
$dist(q,r)=dist(q,N).$

Let $l(t)$ be a minimal geodesic connecting $r$ and $q$. By the first variation formula, $l(t)$ is a normal geodesic starting from $r$. So there is a normal vector $w\in T_{r}^{\perp}N$, such that $q=exp_{r}w$. Since $G$ acts on the $G$-orbit $N$ transitively, there is an $A\in G$ such that
$A\cdot p=r.$
Denote $A^{-1}_{\ast}|_{r}~w\in T_{p}^{\perp}N$ by $v$. Then we have $q=A\cdot \exp_{p}v$ since the action of $G$ is isometric. The second claim is now clear with slight modification.
\end{proof}


Using Theorem~\ref{general compute the orbit sapce}, we obtain the following decomposition result of unitary matrices.
\begin{prop}\label{norm form of unitary group}
Let $U$ be a unitary matrix in $U(n+1)$ with $n\geq d$.

 {\bf (1)} If $2d+1\leq n$, then there exist $A\in O(n+1;\mathbb{R}), U_{1}\in U(d+1),~U_{2}\in U(n-d)$, such that
\begin{equation}\label{general uintary low d}
U=A\begin{pmatrix}
     \Lambda_{1} & \Lambda_{2} & 0 \\
     \Lambda_{2} & \Lambda_{1} & 0 \\
     0 & 0 & Id_{n-2d-1} \\
   \end{pmatrix}\begin{pmatrix}
                                                                       U_{1} & 0 \\
                                                                       0 & U_{2} \\
                                                                     \end{pmatrix},
\end{equation}
where $\Lambda_{1}=\diag(\cos a_{0},\ldots,\cos a_{d}),~\Lambda_{2}=\diag(\sqrt{-1}\sin a_{0},\ldots,\sqrt{-1}\sin a_{d})$, for some $a_{j}\in [0,2\pi],~0\leq j\leq d$.

 {\bf (2)} If $2d+1>n$, set $l=2d+1-n$. Then there exist $A\in O(n+1;\mathbb{R}),~U_{1}\in U(d+1),~U_{2}\in U(n-d)$, such that
\begin{equation}\label{general uintary high d}
U=A\begin{pmatrix}
     \Lambda_{1} & 0 & \Lambda_{2} \\
     \Lambda_{2} & 0 & \Lambda_{1} \\
     0 & Id_{l} & 0 \\

   \end{pmatrix}\begin{pmatrix}
                                                                       U_{1} & 0 \\
                                                                       0 & U_{2} \\
                                                                     \end{pmatrix},
\end{equation}
   where $\Lambda_{1}=\diag(\cos a_{0},\ldots,\cos a_{d-l}),~\Lambda_{2}=\diag(\sqrt{-1}\sin a_{0},\ldots,\sqrt{-1}\sin a_{d-l})$, for some $a_{j}\in [0,2\pi],~0\leq j\leq d-l$.
\end{prop}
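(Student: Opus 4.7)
The plan is to apply Theorem~\ref{general compute the orbit sapce} to the action of $G = O(n+1;\mathbb{R})$ on $M = G(d+1,n+1,\mathbb{C})$ with base point $V_0 = \Span(e_0,\ldots,e_d)$, and then translate the resulting orbit decomposition of the plane $V$ spanned by the first $d+1$ columns of $U$ into the asserted matrix decomposition.

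First I would identify the relevant geometric data at $V_0$. The orbit $N = G \cdot V_0$ is the real Grassmannian sitting inside the complex one, and the isotropy group is $H = O(d+1;\mathbb{R}) \times O(n-d;\mathbb{R})$, acting block-diagonally on $V_0 \oplus V_0^\perp$. Via the identification $T_{V_0}G(d+1,n+1,\mathbb{C}) \cong \mathrm{Hom}(V_0,V_0^\perp) \cong M_{(n-d)\times(d+1)}(\mathbb{C})$, the tangent space $T_{V_0}N$ corresponds to the real subspace $M_{(n-d)\times(d+1)}(\mathbb{R})$, so $T_{V_0}^\perp N = \sqrt{-1}\,M_{(n-d)\times(d+1)}(\mathbb{R})$. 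The isotropy action on a normal vector $\sqrt{-1}X$ is by real orthogonal bi-equivalence $X \mapsto QXP^{-1}$, so Corollary~\ref{real svd theorem} brings $X$ into diagonal form. This is where the two cases split: for $n \geq 2d+1$ the matrix $X$ becomes $^{t}(D,\,0)$ with $D = \diag(a_0,\ldots,a_d)$, while for $n < 2d+1$, setting $l = 2d+1-n$, it becomes $(D,\,0)$ with $D = \diag(a_0,\ldots,a_{d-l})$.

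Next I would exponentiate the skew-Hermitian lift $Y = \sqrt{-1}\bigl(\begin{smallmatrix} 0 & X^{T} \\ X & 0 \end{smallmatrix}\bigr) \in \mathfrak{u}(n+1)$. Since $X$ is diagonal, $Y$ decouples into commuting $2\times 2$ blocks $\sqrt{-1}\,a_j\bigl(\begin{smallmatrix}0 & 1 \\ 1 & 0\end{smallmatrix}\bigr)$, one for each nonzero $a_j$, together with a trivial action on the kernel of $X$. Each such block exponentiates to $\bigl(\begin{smallmatrix}\cos a_j & \sqrt{-1}\sin a_j \\ \sqrt{-1}\sin a_j & \cos a_j\end{smallmatrix}\bigr)$, and reassembling gives exactly the central matrix in \eqref{general uintary low d} for Case (1). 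In Case (2) the same computation produces a matrix in which the $Id_l$ block lies in the middle row position; a row permutation (which is an element of $O(n+1;\mathbb{R})$) moves it to the bottom row block and can be absorbed into $A$, delivering the central matrix in \eqref{general uintary high d}.

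Finally, given $U \in U(n+1)$, Theorem~\ref{general compute the orbit sapce} combined with the isotropy reduction above yields $A \in O(n+1;\mathbb{R})$ and a diagonal $\sqrt{-1}X$ with $V = A\cdot\exp_{V_0}(\sqrt{-1}X)$. The first $d+1$ columns of $U$ and of $A \cdot \exp(Y)$ are then two orthonormal bases of the same plane $V$, differing by a unitary $U_1 \in U(d+1)$; similarly the last $n-d$ columns differ by some $U_2 \in U(n-d)$. This produces the decomposition $U = A \cdot \exp(Y) \cdot \diag(U_1,U_2)$ asserted in the proposition. The main obstacle is the block bookkeeping in Case (2): the kernel of $X$ sits inside $V_0$ rather than $V_0^\perp$, so the $Id_l$ block must be permuted into the correct row position and one must verify the resulting column structure matches \eqref{general uintary high d}; the rest is a routine combination of Theorem~\ref{general compute the orbit sapce}, the real SVD, and the symmetric-space exponential map.
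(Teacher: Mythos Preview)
Your argument is correct and uses the same two ingredients as the paper --- Theorem~\ref{general compute the orbit sapce} plus the real SVD to normalize the normal direction --- but you instantiate them on a different space. The paper takes $M=U(n+1)$ with its bi-invariant metric and lets the larger group $G=O(n+1;\mathbb{R})\times U(d+1)\times U(n-d)$ act by $(A,K,L)\cdot U=AU\,\diag(K^{-1},L^{-1})$, so that the factors $U_1,U_2$ appear directly in the group action and the decomposition drops out of Theorem~\ref{general compute the orbit sapce} in one step. You instead take $M=G(d+1,n+1,\mathbb{C})$ with $G=O(n+1;\mathbb{R})$ alone, apply Theorem~\ref{general compute the orbit sapce} to the \emph{plane} spanned by the first $d+1$ columns, and then recover $U_1,U_2$ afterwards by comparing orthonormal bases of $V$ and $V^\perp$. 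Both routes lead to the identical normal space $\sqrt{-1}\,M_{(n-d)\times(d+1)}(\mathbb{R})$, the same isotropy reduction via Corollary~\ref{real svd theorem}, and the same block exponential; the paper's choice is marginally more direct since it avoids the lift-back step, while yours makes the connection to Corollary~\ref{classification of plane} more transparent from the outset.
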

\begin{proof}
We consider only the case (1) with $2d+1\leq n$, the proof for the other case is similar. Equip $U(d+1)$ with the standard bi-invariant metric given by $(u,v)=Re~tr(u^{\ast}~v)$ over the Lie algebra $\mathfrak{u}(n+1)$.
Consider the group $G:=O(n+1;\mathbb{R})\times U(d+1)\times U(n-d)$ and its action on $U(n+1)$ given by
\begin{equation*}
 G\times U(n+1)\rightarrow U(n+1),\quad\quad\quad
 ((A,K,L)
 ,U)\mapsto AU\begin{pmatrix}
      K^{-1} &  \\
       & L^{-1} \\
    \end{pmatrix},
\end{equation*}
which preserves the metric of $U(n+1)$. 
We denote the $G$-orbit through the identity $Id\in U(n+1)$ by $N$. The isotropy group $H$ at $Id$ is isomorphic to $O(d+1;\mathbb{R})\times O(n-d;\mathbb{R})$,
\begin{equation*}
H=\{(\begin{pmatrix}
      A_{1} & 0 \\
      0 & A_{2} \\
    \end{pmatrix}
,A_{1},A_{2})|~A_{1}\in O(d+1;\mathbb{R}),~A_{2}\in O(n-d;\mathbb{R})\}.
\end{equation*}
It is easy to see that $T_{Id}N=\mathfrak{o}(n+1;\mathbb{R})+\mathfrak{u}(d+1)+\mathfrak{u}(n-d)$ (this is not a direct sum). Hence the normal space $T_{Id}^{\perp}N$ is given by
\begin{equation*}
T_{Id}^{\perp}N=\{\begin{pmatrix}
        0_{(d+1)\times(d+1)} & \sqrt{-1} ~^{t}B\\
        \sqrt{-1}~B & 0_{(n-d)\times(n-d)} \\
      \end{pmatrix}|~B\in M_{(n-d)\times(d+1)}(\mathbb{R})\}.
\end{equation*}
The induced isotropy action of $H$ on $T_{Id}^{\perp}N$ is $H\times T_{Id}^{\perp}N\rightarrow T_{Id}^{\perp}N$ given by
$$
 \begin{small}\Big((\begin{pmatrix}
      A_{1} & 0 \\
      0 & A_{2} \\
    \end{pmatrix}
,A_{1},A_{2}),\begin{pmatrix}
        0_{(d+1)\times(d+1)} & \sqrt{-1} ~^{t}B\\
        \sqrt{-1}~B & 0_{(n-d)\times(n-d)} \\
      \end{pmatrix}\Big)\mapsto \begin{pmatrix}
        0_{(d+1)\times(d+1)} & \sqrt{-1} ~A_{1}~^{t}BA_{2}^{-1}\\
        \sqrt{-1}~A_{2}BA_{1}^{-1} & 0_{(n-d)\times(n-d)} \\
      \end{pmatrix}.
\end{small}
$$
By Corollary \ref{real svd theorem} and the assumption $n-d\geq d+1$, without loss of generality, we may assume that $B$ is in the form
\begin{equation*}
B=\begin{pmatrix}
    \Lambda \\
    0_{(n-2d-1)\times(d+1)} \\
  \end{pmatrix},
\end{equation*}
where $\Lambda=\diag(a_{0},\ldots,a_{d})$, for some $a_{j}\in\mathbb{R},~0\leq i\leq d$. Then by a straightforward computation on matrix exponential while invoking Theorem \ref{general compute the orbit sapce}, we arrive at \eqref{general uintary low d}.
\end{proof}

With the above decomposition theorem of unitary matrices, we can now show how to reconstruct the $(d+1)$-plane $V\subset \mathbb{C}^{n+1}$ from singular values.

\begin{coro}\label{classification of plane}
Let $V$ be a fixed $(d+1)$-plane in $\mathbb{C}^{n+1}$. Denote its distinct singular values by $\sigma_0,\sigma_1,\cdots,\sigma_m$, in increasing order, with multiplicities $r_0, r_1, \cdots, r_m$, respectively.  Then all $\sigma_j \in [0,1]$, and there exist numbers $a_j\in [0, \frac{\pi}{4}]$ such that $\sigma_{j}=\cos2a_j$ for all $j$. Moreover,

{\bf (1)} if $2d+1\leq n$, then any orthonormal basis $E$ of $V$ can be expressed as $E=AV_{\vec{\sigma}}U$, where $A\in O(n+1;\mathbb{R})$, $U\in U(d+1)$, and
\begin{small}
\begin{equation}\label{jm alpha0}
V_{\vec{\sigma}}\doteq\begin{pmatrix}
\mathrm{J}_0(\sigma_0)~&&\\
&\ddots&\\
&&\mathrm{J}_m(\sigma_m)\\
0_{(n-2d-1)\times r_0}&\cdots&0_{(n-2d-1)\times r_m}\\
\end{pmatrix},~~~
\mathrm{J}_j(\sigma_j)\doteq\begin{pmatrix}
  \cos a_j \,Id_{r_j} \\
  \sqrt{-1}\sin a_j\,Id_{r_j} \\
\end{pmatrix},~0\leq j\leq m;
  \end{equation}
\end{small}

 {\bf (2)} if $2d+1>n$ and we set $l=2d+1-n$, then $\sigma_m=1$, $r_m\geq l$, and any orthonormal basis $E$ of $V$ can be expressed by $E=AV_{\vec{\sigma}}U$, where $A\in O(n+1;\mathbb{R})$, $U\in U(d+1)$, and
\begin{small}
\begin{equation}\label{jm alpha}
V_{\vec{\sigma}}\doteq\begin{pmatrix}
\mathrm{J}_0(\sigma_0)~&&&\\
&\ddots&&\\
&&\mathrm{J}_{m-1}(\sigma_{m-1})&\\
&&&Id_{r_m}\\
0_{(r_m-l)\times r_0}&\cdots&\cdots&0_{(r_m-l)\times r_m}
\end{pmatrix},
  \end{equation}
\end{small}
where $\mathrm{J}_j(\sigma_j)$ is defined similarly as in \eqref{jm alpha0}.
\end{coro}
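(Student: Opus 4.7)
The plan is to extend the orthonormal basis $E$ to a unitary matrix in $U(n+1)$ and then invoke Proposition~\ref{norm form of unitary group}. Since $E^{*}E = Id_{d+1}$, we can append $n-d$ additional columns forming an orthonormal basis of the complementary subspace $V^\perp \subset \mathbb{C}^{n+1}$ to produce a unitary matrix $\widetilde{U} \in U(n+1)$; by construction $E$ is exactly the first $d+1$ columns of $\widetilde{U}$.

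Applying Proposition~\ref{norm form of unitary group} to $\widetilde{U}$, we obtain a decomposition
\begin{equation*}
\widetilde{U} = A\,M\,\mathrm{diag}(U_1,U_2),\qquad A\in O(n+1;\mathbb{R}),\; U_1\in U(d+1),\; U_2\in U(n-d),
\end{equation*}
where $M$ is as in \eqref{general uintary low d} in case (1) and as in \eqref{general uintary high d} in case (2). Reading off the first $d+1$ columns gives $E = A\,M_0\,U_1$, where $M_0$ is the $(n+1)\times(d+1)$ block of the first $d+1$ columns of $M$. Since ${}^{t}\!A\,A = Id$, a direct multiplication yields
\begin{equation*}
{}^{t}\!EE \;=\; {}^{t}U_1\,\bigl({}^{t}M_0\,M_0\bigr)\,U_1.
\end{equation*}
A straightforward block computation, using $\Lambda_2^{2}=-\mathrm{diag}(\sin^{2}a_j)$ and hence $\Lambda_1^{2}+\Lambda_2^{2}=\mathrm{diag}(\cos 2a_j)$, shows
\begin{equation*}
{}^{t}M_0\,M_0 =
\begin{cases}
\mathrm{diag}(\cos 2a_0,\ldots,\cos 2a_d), & \text{case (1)},\\
\mathrm{diag}(\cos 2a_0,\ldots,\cos 2a_{d-l},\,1,\ldots,1), & \text{case (2)},
\end{cases}
\end{equation*}
with $l$ trailing ones in case (2). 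By Theorem~\ref{singular values as unique invariant}, the singular values of $V$ are the multiset of magnitudes $|\cos 2a_j|$, augmented by $l$ additional $1$'s in case (2); in particular each $\sigma_j\in[0,1]$, and in case (2) the largest singular value $\sigma_m=1$ has multiplicity at least $l$.

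To reach the explicit normal form $V_{\vec{\sigma}}$, we first normalize each $a_j$ into $[0,\pi/4]$ so that $\sigma_j=\cos 2a_j$. Whenever $\cos 2a_j<0$, replace $a_j$ by $\pi/2-a_j$: this interchanges the corresponding $\cos a_j$ and $\sin a_j$ entries in the $j$-th column of $M_0$, and can be realized by swapping the associated pair of rows of $M_0$, a real permutation absorbed into $A$. Sign flips needed to make $\cos a_j,\sin a_j\ge 0$ are absorbed into $A$ as diagonal $\pm 1$ reflections. Finally, we group the indices by their common singular value and order them increasingly: the associated column permutation lies in $O(d+1;\mathbb{R})\subset U(d+1)$ and is absorbed into $U_1$, while the matching row permutation that simultaneously interleaves the $\Lambda_1$-rows with the $\Lambda_2$-rows of each group into the stacked $2r_j\times r_j$ block $\mathrm{J}_j(\sigma_j)$ lies in $O(n+1;\mathbb{R})$ and is absorbed into $A$. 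This produces the desired expressions \eqref{jm alpha0} and \eqref{jm alpha}.

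The main technical obstacle is the bookkeeping in this last step: one has to verify that every modification — the reduction of $a_j$ to $[0,\pi/4]$, the sign normalizations, the grouping of repeated singular values, and the interleaving of $\Lambda_1$- and $\Lambda_2$-rows into each $\mathrm{J}_j(\sigma_j)$ — is realized by a transformation that lies in $O(n+1;\mathbb{R})$ on the left (to be absorbed into $A$) or $U(d+1)$ on the right (to be absorbed into $U_1$), and that the left and right operations do not disturb one another. Granted this, the corollary is a direct unwinding of Proposition~\ref{norm form of unitary group} combined with Theorem~\ref{singular values as unique invariant}.
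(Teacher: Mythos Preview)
Your argument is correct and follows the paper's own approach: extend $E$ to a unitary matrix, apply Proposition~\ref{norm form of unitary group}, and then normalize the angles into $[0,\pi/4]$ by left $O(n+1;\mathbb{R})$ and right $U(d+1)$ moves (the paper leaves this last step as a ``routine exercise''). One small imprecision: the substitution $a_j\mapsto \pi/2-a_j$ cannot be realized by a row swap alone, since that sends $\bigl(\cos a_j,\ \sqrt{-1}\sin a_j\bigr)^{t}$ to $\bigl(\sqrt{-1}\sin a_j,\ \cos a_j\bigr)^{t}$; you also need a right multiplication of that column by $-\sqrt{-1}\in U(1)\subset U(d+1)$ together with a row sign flip, which is exactly the kind of left/right bookkeeping you already flag as needing verification.
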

\begin{proof}
~We only deal with the case of $2d+1\leq n$; the proof for the case of $2d+1>n$ is similar.
Assume $E=(e_{0},\ldots,e_{d})$ is an orthonormal basis of $V$. Then there is a $U\in U(n+1)$, such that
\begin{equation*}
E=U\begin{pmatrix}
          Id_{d+1} \\
          0 \\
        \end{pmatrix}.
\end{equation*}
By \eqref{general uintary low d}, we obtain \eqref{jm alpha0} for some real parameter $(a_{0},\ldots,a_{d})\in\mathbb{R}^{d+1}$, where $a_{j}\in[0,2\pi]$. We leave it to the reader as a routine exercise to show that 
by multiplying $O(n+1;\mathbb{R})$ on the left and $U(d+1)$ on the right of \eqref{jm alpha0}, we may assume $a_{i} \in[0,\frac{\pi}{4}]$.
\end{proof}
\begin{remark}\label{rk-nonfull}
From the above proposition, we see that if $2d+1<n$, then there exists a universal $\mathbb{C}^n\subset\mathbb{C}^{n+1}$ such that any $(d+1)$-planes in $\mathbb{C}^{n+1}$ can be transformed into $\mathbb{C}^n$ by orthogonal transformations. So, to consider the orbit space $G(d+1,n+1,\mathbb{C})/O(n+1;\mathbb{R})$, we may assume $2d+1\geq n$ in the following.
\end{remark}

Now the structure of $G(d+1,n+1,\mathbb{C})/O(n+1;\mathbb{R})$  follows from Corollary~\ref{classification of plane} directly. 

\begin{theorem}\label{moduli of plane}


If $d\leq n\leq 2d+1$, then the orbit space
$$G(d+1,n+1,\mathbb{C})/O(n+1;\mathbb{R})\cong \Theta_{d,n},$$
where
\begin{small}
\begin{equation}\label{eq-Theta}
\Theta_{d,n}:=\{\vec{\sigma}=(\sigma_{0},\sigma_{1},\cdots,\sigma_{d})\in \mathbb{R}^{d+1}|~0\leq \sigma_{0}\leq \sigma_{1}\leq\cdots\leq  \sigma_{d}\leq 1,~\sigma_{n-d}=\cdots=\sigma_{d}=1\},
\end{equation}
\end{small}
which is a convex polytope in $\mathbb{R}^{d+1}$.
\end{theorem}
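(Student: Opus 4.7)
My plan is to construct the isomorphism explicitly as the map that sends a plane to its tuple of singular values, and to derive bijectivity as a clean consequence of Corollary~\ref{classification of plane}. Define
\[
\Phi:G(d+1,n+1,\mathbb{C})\rightarrow \mathbb{R}^{d+1},\quad V\mapsto \vec{\sigma}(V)=(\sigma_{0}(V),\sigma_{1}(V),\ldots,\sigma_{d}(V)),
\]
where the $\sigma_j(V)$ are the singular values of $^{t}\!EE$ in increasing order, for any orthonormal frame $E$ of $V$ (as in \eqref{representative of plane}). Changing the frame replaces $^{t}\!EE$ by $^{t}U\,{}^{t}\!EE\,U$, which preserves singular values; left multiplication by $A\in O(n+1;\mathbb{R})$ leaves $^{t}\!EE$ itself unchanged since $^{t}A\,A=Id_{n+1}$. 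Hence $\Phi$ is well defined on $G(d+1,n+1,\mathbb{C})$ and constant on $O(n+1;\mathbb{R})$-orbits, so it descends to $\overline{\Phi}$ on the quotient.

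Next, I would show that $\overline{\Phi}$ lands in $\Theta_{d,n}$ and is bijective onto it. Using Corollary~\ref{classification of plane}, every plane $V$ admits a frame $E=AV_{\vec{\sigma}}U$ with $\sigma_j=\cos 2a_j$ and $a_j\in[0,\pi/4]$, so $\sigma_j\in[0,1]$; moreover in case~(2), with $l=2d+1-n$, the singular value $1$ appears with multiplicity $r_m\geq l$, forcing $\sigma_{d-l+1}=\cdots=\sigma_d=1$, and since $d-l+1=n-d$ this is precisely the defining equality of $\Theta_{d,n}$. For surjectivity, given any $\vec{\sigma}\in\Theta_{d,n}$, the canonical frame $V_{\vec{\sigma}}$ from \eqref{jm alpha0}/\eqref{jm alpha} is orthonormal and a direct block computation gives $^{t}V_{\vec{\sigma}}V_{\vec{\sigma}}=\diag(\sigma_0\,Id_{r_0},\ldots,\sigma_m\,Id_{r_m})$, which has precisely $\vec{\sigma}$ as its singular-value tuple. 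For injectivity, if $\overline{\Phi}([V])=\overline{\Phi}([V'])=\vec{\sigma}$, then orthonormal frames may be written $E=AV_{\vec{\sigma}}U$ and $E'=A'V_{\vec{\sigma}}U'$; consequently $V'=A'A^{-1}V$ lies in the same $O(n+1;\mathbb{R})$-orbit as $V$.

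Finally, the fact that $\Theta_{d,n}$ is a convex polytope is immediate, since it is cut out of $\mathbb{R}^{d+1}$ by the affine hyperplanes $\sigma_{n-d}=\cdots=\sigma_d=1$ together with the half-spaces $0\leq\sigma_0\leq\sigma_1\leq\cdots\leq\sigma_d\leq 1$. The only real friction is bookkeeping: one must check that the two canonical forms in Corollary~\ref{classification of plane} assemble into a single description via the identity $d-l+1=n-d$, and that the assumption $d\leq n\leq 2d+1$ is exactly the regime where Remark~\ref{rk-nonfull} does not force a trivial reduction into a smaller ambient $\mathbb{C}^{n}$. Once these conventions are pinned down, the theorem reduces to reading off the coordinates of $V_{\vec{\sigma}}$, so no substantive analytic difficulty remains beyond what is already contained in Corollary~\ref{classification of plane}.
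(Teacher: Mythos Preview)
Your proposal is correct and takes essentially the same approach as the paper: the paper simply states that the theorem follows directly from Corollary~\ref{classification of plane}, and your argument fleshes out precisely the details one would write down---defining the singular-value map, checking it is $O(n+1;\mathbb{R})$-invariant and lands in $\Theta_{d,n}$, and reading off surjectivity and injectivity from the canonical form $E=AV_{\vec{\sigma}}U$.
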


We point out that the isotropic $2$-planes in $Q_{4}$ given by the linear spans of $(e_i\pm\sqrt{-1}e_i)/\sqrt{2}, 1\leq i\leq 3$, respective of the sign, relative to the standard basis for ${\mathbb C}^6$, shows that two planes corresponding to the same parameters in $\Theta_{d,n}$ need not be congruent under the action of $SO(n+1;\mathbb{R})$. This is the key reason why we chose the group $O(n+1;\mathbb{R})$. 
\begin{remark}
For a given $\vec{\sigma}\in \Theta_{d,n}$, the orbit determined by it can also be described in a clear way through computing the isotropic group of $V_{\vec{\sigma}}$ {\rm(}see \eqref{jm alpha}{\rm)}. Since it will not be used in the following, we omit it here, only to point out that the computation is similar to the proof of Lemma~{\rm\ref{lemma-equiv}} in the next section.
\end{remark}

\section{The moduli space of constantly curved 2-spheres minimal in both $\mathcal{Q}_{n-1}$ and $\mathbb{C}P^n$}\label{sec-mod}
Let $d,n$ be two positive integers with $d\leq n$, and let $p$ be a nonnegative integer with $0\leq p\leq [\frac{d}{2}]$.
Suppose $\gamma\in \mathbf{H}_{d,n,p}$. 
From~\eqref{rnc in hyperquadric}, we see $\gamma$ can be parameterized as $\gamma=EZ_{d,p}$, where $E$ is an orthonormal basis of the projective $d$-plane spanned by $\gamma$. It follows from Remark~\ref{rk-nonfull} that if $2d+1<n$, then $\gamma$ is not linearly full. 
To consider the linearly full minimal 2-spheres, we make the following convention in the subsequent part of this paper,
$$n\leq 2d+1,~~~l:=2d+1-n.$$
The structure of $\mathbf{H}_{d,n,p}$ is now clear following the expression of $E$ given in Corollary~\ref{classification of plane}.
\begin{prop}\label{classification of cdn}
\begin{equation}
\mathbf{H}_{d,n,p}=\{EZ_{d,p}|~E\in M(n+1, d+1),~E^{*}E=Id_{d+1},~~^{t}\!EE\in \mathbf{S}_{d,n,p}\},
\end{equation}
where $\mathbf{S}_{d,n,p}$ is a closed subset of $\mathscr{S}_{d,p}$ {\rm(}see~\eqref{important subsapce of symm matrices} for notation{\rm)}, defined by
\begin{equation*}
\{S\in \mathscr{S}_{d,p}| ~\text{$S$ takes 1 as its maximal singular value with multiplicity no less than } 2d+1-n\}.
\end{equation*}
\end{prop}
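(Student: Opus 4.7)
The plan is to rewrite both defining conditions of $\mathbf{H}_{d,n,p}$, namely (i) $E^*E=\mathrm{Id}_{d+1}$ and (ii) $EZ_{d,p}\subset\mathcal{Q}_{n-1}$, as algebraic conditions on the complex symmetric matrix $S:={}^tE\,E$, and then to leverage Corollary~\ref{classification of plane} to pass back and forth between $S$ and an actual orthonormal frame $E$.

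For (ii), I would use the fact that $\mathcal{Q}_{n-1}$ is cut out by ${}^t w\cdot w=0$. Substituting $w=EZ_{d,p}([u,v])$ and requiring this identity for all $[u,v]\in\mathbb{C}P^1$ yields the polynomial identity ${}^tZ_{d,p}\cdot S\cdot Z_{d,p}=0$. Since ${}^tS=S$, the matrix $S$ lies in $\mathrm{Sym}_{d+1}(\mathbb{C})$, and the preceding identity is precisely the membership condition $S\in\mathscr{S}_{d,p}$ recorded in \eqref{important subsapce of symm matrices}.

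For (i), I would argue in both directions using Corollary~\ref{classification of plane}. Given $E$ with $E^*E=\mathrm{Id}_{d+1}$, write $E=A\,V_{\vec\sigma}\,U$ with $A\in O(n+1;\mathbb{R})$ and $U\in U(d+1)$; since ${}^tAA=\mathrm{Id}_{n+1}$ one has $S={}^tU\,({}^tV_{\vec\sigma}V_{\vec\sigma})\,U$, and a block-by-block check of each $\mathrm{J}_j(\sigma_j)$ (via $\cos 2a_j=\cos^2 a_j-\sin^2 a_j$ and $\cos^2 a_j+\sin^2 a_j=1$) identifies ${}^tV_{\vec\sigma}V_{\vec\sigma}$ with the diagonal matrix of singular values $\sigma_0,\ldots,\sigma_d$ of the plane $V$ spanned by $E$. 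Hence the singular values of $S$ lie in $[0,1]$, and when $2d+1>n$ the value $1$ occurs with multiplicity at least $l=2d+1-n$; this is precisely $S\in\mathbf{S}_{d,n,p}$. Conversely, given $S\in\mathbf{S}_{d,n,p}$, Theorem~\ref{singular values as unique invariant}(1) produces $U\in U(d+1)$ with $S={}^tU\,\Sigma\,U$, where $\Sigma$ is the diagonal matrix of singular values. The same block computation as above yields simultaneously $V_{\vec\sigma}^*V_{\vec\sigma}=\mathrm{Id}_{d+1}$ and ${}^tV_{\vec\sigma}V_{\vec\sigma}=\Sigma$, so $E:=V_{\vec\sigma}U$ satisfies $E^*E=\mathrm{Id}_{d+1}$ and ${}^tEE=S$, giving the required orthonormal frame.

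No serious obstacle is anticipated: the only bookkeeping subtlety is treating the two cases $2d+1\le n$ and $2d+1>n$ with the correct shape of $V_{\vec\sigma}$, and this has already been carried out in Corollary~\ref{classification of plane}. Closedness of $\mathbf{S}_{d,n,p}$ inside $\mathscr{S}_{d,p}$ then follows immediately from continuity of singular values as functions of matrix entries, once the characterization is in hand.
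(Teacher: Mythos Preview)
Your proposal is correct and follows essentially the same route as the paper, which simply states that the result is ``clear following the expression of $E$ given in Corollary~\ref{classification of plane}.'' One minor remark: the ``converse'' construction you give (producing an explicit $E$ with $E^*E=\mathrm{Id}$ and ${}^tEE=S$ from a given $S\in\mathbf{S}_{d,n,p}$) is more than the proposition itself requires---the reverse inclusion follows immediately since $\mathbf{S}_{d,n,p}\subset\mathscr{S}_{d,p}$---but it exactly anticipates the content of Remark~\ref{construction}, so it is useful rather than superfluous.
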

Here, we make another convention that will be used in the subsequent part of this paper,

$
~~~~~~~~~~~~~~~~~~~~~~~~~~~~~~~~~\mathbf{H}_{d,n}:=\mathbf{H}_{d,n,0},~~~~~~\mathbf{S}_{d,n}:=\mathbf{S}_{d,n,0}.
$
\begin{remark}\label{construction}
Conversely, for a given symmetric matrix $S\in \mathbf{S}_{d,n,p}$ with $\corank(S)=r_0$, suppose all the distinct positive singular values of $S$ are given by
$$\cos 2a_1< \cos 2a_2< \cdots< \cos 2a_{m},$$
with multiplicities $r_1, r_2, \cdots, r_m$, respectively, where $a_j\in[0,\frac{\pi}{4}]$.
Consider the SVD of $S$. It follows from Theorem~{\rm\ref{singular values as unique invariant} that there exists a unitary matrix $U\in U(d+1)$} such that
\begin{equation}\label{eq-SVD-S}
S=^{t}\!\!U\diag(0_{r_0\times r_0},\,\cos 2a_1I_{r_1},\,\cos 2a_2I_{r_2},\, \cdots , \,\cos 2a_{m}I_{r_m})\,U.
\end{equation}
It is easy to verify that
\begin{equation}\label{ex-construction}
V_{\vec{\sigma}}\,U\,Z_{d,p} \in \mathbf{H}_{d,n,p},
\end{equation}
where we have used the notation $V_{\vec{\sigma}}$ introduced in \eqref{jm alpha0} and \eqref{jm alpha}.
\end{remark}
\begin{lemma}\label{lemma-equiv}
If $\widetilde U\in U(d+1)$ is another unitary matrix in the SVD \eqref{eq-SVD-S} of $S$,
then there exists a matrix $A\in O(n+1;\mathbb{R})$ such that
$$AV_{\vec{\sigma}}U=V_{\vec{\sigma}}\widetilde U.$$
\end{lemma}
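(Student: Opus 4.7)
The plan is to invoke Theorem~\ref{singular values as unique invariant}(1) to reduce the task to an equivariance statement for $V_{\vec{\sigma}}$, and then to build the required $A$ blockwise. By that theorem, there is a block-diagonal matrix $D=\diag(A_{r_0},A_{r_1},\ldots,A_{r_m})$ with $A_{r_0}\in U(r_0)$ and $A_{r_j}\in O(r_j;\mathbb{R})$ for $1\leq j\leq m$, such that $\widetilde U = DU$. Thus it suffices to produce $A\in O(n+1;\mathbb{R})$ with $AV_{\vec{\sigma}}=V_{\vec{\sigma}}D$, since $AV_{\vec{\sigma}}U=V_{\vec{\sigma}}DU=V_{\vec{\sigma}}\widetilde U$ then follows immediately.

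The column-block partition $r_0+r_1+\cdots+r_m=d+1$ agrees with the column structure of $V_{\vec{\sigma}}$, so right-multiplication by $D$ amounts to multiplying each block $\mathrm{J}_j(\sigma_j)$ on the right by $A_{r_j}$ (and leaving the trailing zero rows unchanged). Correspondingly, I shall build $A$ as a block-diagonal real orthogonal matrix whose blocks $B_j$ act on the row groups associated with each $\mathrm{J}_j(\sigma_j)$, and as the identity on any trailing zero rows. For each $j$ with $\sigma_j\in (0,1)$, the matrix $A_{r_j}$ is already real orthogonal, so $B_j=\diag(A_{r_j},A_{r_j})\in O(2r_j;\mathbb{R})$ visibly satisfies $B_j\mathrm{J}_j(\sigma_j)=\mathrm{J}_j(\sigma_j)A_{r_j}$. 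When $\sigma_m=1$, the relevant block of $V_{\vec{\sigma}}$ is either $\bigl(\begin{smallmatrix}Id_{r_m}\\0\end{smallmatrix}\bigr)$ (Case~(1) of Corollary~\ref{classification of plane}) or simply $Id_{r_m}$ (Case~(2)), and in either case $A_{r_m}\in O(r_m;\mathbb{R})$ furnishes the required $B_m$ at once.

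The substantive step, and the only real obstacle, is the $\sigma_0=0$ block (when present). There $a_0=\pi/4$, so
\[
\mathrm{J}_0(0)=\tfrac{1}{\sqrt{2}}\begin{pmatrix}Id_{r_0}\\ \sqrt{-1}\,Id_{r_0}\end{pmatrix},
\]
and $A_{r_0}$ is only guaranteed to be unitary, not real orthogonal. Writing $A_{r_0}=C+\sqrt{-1}\,D$ with $C,D$ real, I take
\[
B_0=\begin{pmatrix}C & D\\ -D & C\end{pmatrix}.
\]
The unitarity relations $CC^{t}+DD^{t}=Id$ and $CD^{t}-DC^{t}=0$ yield $B_0B_0^{t}=Id$, so $B_0\in O(2r_0;\mathbb{R})$, and a one-line computation confirms $B_0\mathrm{J}_0(0)=\mathrm{J}_0(0)A_{r_0}$.

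Assembling the $B_j$ along the diagonal, padded by the identity on any trailing zero rows, produces the desired $A\in O(n+1;\mathbb{R})$. Everything outside the $\sigma_0=0$ case is routine block multiplication; the genuine content is the standard embedding $U(r_0)\hookrightarrow O(2r_0;\mathbb{R})$ by $C+\sqrt{-1}\,D\mapsto \bigl(\begin{smallmatrix}C&D\\-D&C\end{smallmatrix}\bigr)$, which converts a unitary right action into a real orthogonal left action and is precisely what compensates for the fact that $A_{r_0}$ lies only in $U(r_0)$.
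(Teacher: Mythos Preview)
Your proof is correct and follows essentially the same approach as the paper's: both invoke Theorem~\ref{singular values as unique invariant}(1) to write $\widetilde U=\diag(A_{r_0},\ldots,A_{r_m})U$, then construct $A$ blockwise via $\mathrm{J}_jA_{r_j}=\diag(A_{r_j},A_{r_j})\mathrm{J}_j$ for the positive singular values and the standard realification $U(r_0)\hookrightarrow O(2r_0;\mathbb{R})$ for the zero block. One trivial note: you use the symbol $D$ both for the block-diagonal matrix $\diag(A_{r_0},\ldots,A_{r_m})$ and for the imaginary part of $A_{r_0}$, which you may wish to disambiguate.
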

\begin{proof}
It follows from Theorem~\ref{singular values as unique invariant} that
$$\widetilde U=\diag(A_{r_0},A_{r_2},\cdots,A_{r_m})U,$$
where $A_{r_0}\in U(r_0)$ is unitary and $A_{r_j}\in O(r_j;\mathbb{R})$ is real orthogonal for any $1\leq j\leq m$.
Note that $V_{\vec{\sigma}}$ can also be written as
$$V_{\vec{\sigma}}=\diag(\mathrm{J}_0,\mathrm{J}_1,\cdots, \mathrm{J}_{m-1},\mathrm{J}_m),$$
where $\mathrm{J}_0=^t\!\!\begin{pmatrix}
  \frac{1}{\sqrt{2}} \,Id_{r_0},
  \frac{\sqrt{-1}}{\sqrt{2}}\,Id_{r_0}
\end{pmatrix}$, $\mathrm{J}_j=^t\!\!\begin{pmatrix}
 \cos a_j \,Id_{r_j},
  \sqrt{-1}\sin a_j\,Id_{r_j}
\end{pmatrix}$
for $1\leq j\leq m-1$, and
$\mathrm{J}_m=^t\!\!\begin{pmatrix}
 \cos a_m \,Id_{r_m},
  \sqrt{-1}\sin a_m\,Id_{r_m}
\end{pmatrix}$ or $\mathrm{J}_m=^t\!\!\begin{pmatrix}
  Id_{r_m} ,
  0_{ r_m\times(r_m-l)} \\
\end{pmatrix}$.

Now the conclusion follows from a straightforward verification that
$$\mathrm{J}_0A_{r_0}=\begin{pmatrix}
  \frac{1}{\sqrt{2}} \,Id_{r_0}\\
  \frac{\sqrt{-1}}{\sqrt{2}}\,Id_{r_0}
\end{pmatrix}A_{r_0}=\begin{pmatrix}
  Re(A_{r_0})&Im(A_{r_0})\\
  -Im(A_{r_0})&Re(A_{r_0})
\end{pmatrix}\begin{pmatrix}
  \frac{1}{\sqrt{2}} \,Id_{r_0}\\
  \frac{\sqrt{-1}}{\sqrt{2}}\,Id_{r_0}
\end{pmatrix},$$
$\mathrm{J}_jA_{r_j}=\diag(A_{r_j}, A_{r_j})\mathrm{J}_j$ for $1\leq j\leq m-1$, and $\mathrm{J}_mA_{r_m}=\diag(A_{r_m}, B)\mathrm{J}_m$ with $B=A_{r_m}$ or $B=Id_{r_m-l}$.
\end{proof}

To build a clearer relation between $\mathbf{H}_{d,n,p}$ and $\mathbf{S}_{d,n,p}$, some equivalences need to be introduced.

Two minimal 2-spheres in $\mathbf{H}_{d,n,p}$ are said to be equivalent if they are congruent in $\mathcal{Q}_{n-1}$, i.e., if one can be brought to the other by some $A\in O(n+1,\mathbb{R})$ and some $SU(2)$-reparametrization of ${\mathbb C}P^1$. For convenience, denote by $\mathbf{H}_{d,n,p}/O(n+1;\mathbb{R})$ the set of all equivalence classes. We point out that
\begin{equation*}
\mathbf{Mini}_{d,n}/O(n+1;\mathbb{R})=\bigsqcup_{0\leq p\leq d}\mathbf{H}_{d,n,p}/O(n+1;\mathbb{R}),
\end{equation*}
describes the moduli space of all noncongruent 2-spheres of constant curvature, minimal in both $\mathcal{Q}_{n-1}$ and $\mathbb{C}P^n$.

The equivalence on $\mathbf{S}_{d,n,p}$ is defined by the following group action, which is induced by \eqref{action on M_d},
\begin{align}\label{action on mathbf s_d}
 \varrho_{d}: U(1)\times SU(2)\times \mathbf{S}_{d,n,p}\rightarrow \mathbf{S}_{d,n,p},\quad\quad\quad
 (\lambda,g, S)\mapsto \lambda\, ^{t}\!\!\rho^{\frac{d}{2}}(g)\cdot S\cdot \rho^{\frac{d}{2}}(g).
\end{align}
The orbit space of $\varrho_{d}$ is denoted by $\mathbf{S}_{d,n,p}/U(1)\times SU(2)$.

From the above definition of equivalence, it is easy to see that the map
$$\mathbf{H}_{d,n,p}/O(n+1)\longrightarrow \mathbf{S}_{d,n,p}/U(1)\times SU(2),~~~~~~[\gamma]\mapsto [^{t}WW],$$
is well-defined with its inverse given by
$$\mathbf{S}_{d,n,p}/U(1)\times SU(2)\longrightarrow\mathbf{H}_{d,n,p}/O(n+1),~~~~~~[S]\mapsto [V_{\vec{\sigma}}UZ_{d,p}],$$
where $\vec{\sigma}$ is the singular-value vector of $S$, $U\in U(d+1)$ is a unitary matrix coming from the SVD \eqref{eq-SVD-S} of $S$, and the well-definedness of this inverse mapping follows from Lemma~\ref{lemma-equiv}.

\begin{theorem}\label{simple classification} The moduli space $\mathbf{H}_{d,n,p}/O(n+1;\mathbb{R})$ is given by
\[\mathbf{H}_{d,n,p}/O(n+1;\mathbb{R})\cong\mathbf{S}_{d,n,p}/U(1)\times SU(2).\]
\end{theorem}

A special case is $p=0$, for which $\mathbf{H}_{d,n}/O(n+1;\mathbb{R})$ is the moduli space of all noncongruent holomorphic 2-spheres of constant curvature and degree $d$ in $\mathcal{Q}_{n-1}$. As a consequence of Theorem~\ref{simple classification}, we reprove the main results of \cite{M-N-T}.
\begin{theorem}\label{coro-dim}
~

{\bf (1)} If $2d+1<n$, then the holomorphic $2$-spheres of constant curvature and degree $d$

~~~~~in $\mathcal{Q}_{n-1}$ is not linearly full.

{\bf (2)}\label{con-2} $\bigcup_{n<2d+1}\mathbf{H}_{d,n}/O(n+1;\mathbb{R})$ constitutes the boundary of\, $\mathbf{H}_{d,2d+1}/O(n+1;\mathbb{R})$, and
\begin{equation}\label{eq-dimcount}
\dim(\mathbf{H}_{d, 2d+1}/O(2d+2;\mathbb{R}))=d^2-d-4.
\end{equation}
\end{theorem}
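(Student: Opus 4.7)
Part (1) is immediate from Remark~\ref{rk-nonfull}. Any element of $\mathbf{H}_{d,n}$ is of the form $EZ_d$, where the columns of $E$ span a $(d{+}1)$-plane $V\subset\mathbb{C}^{n+1}$. Remark~\ref{rk-nonfull} (which is itself a consequence of Corollary~\ref{classification of plane}) says that if $2d+1<n$ then $V$ is $O(n{+}1;\mathbb{R})$-congruent to a plane contained in a fixed $\mathbb{C}^n\subset\mathbb{C}^{n+1}$. Applying such an orthogonal transformation to $EZ_d$ therefore embeds the 2-sphere into $\mathbb{C}P^{n-1}\subset \mathbb{C}P^n$, showing it cannot be linearly full.

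For part (2), set $n=2d+1$, so that $l=2d+1-n=0$. Theorem~\ref{simple classification} provides the homeomorphism
\[
\mathbf{H}_{d,2d+1}/O(2d+2;\mathbb{R})\;\cong\;\mathbf{S}_{d,2d+1}\big/\bigl(U(1)\times SU(2)\bigr),
\]
and Proposition~\ref{classification of cdn} identifies $\mathbf{S}_{d,2d+1}$ with the subset of $\mathscr{S}_d$ consisting of those symmetric matrices whose singular values all lie in $[0,1]$ (the condition on the multiplicity of the singular value $1$ is vacuous when $l=0$). Its interior, defined by the strict inequality $\sigma_j<1$ for every $j$, is open and dense in $\mathscr{S}_d$ (it contains a neighborhood of $0$), and by the construction in Remark~\ref{construction} it parametrizes those 2-spheres that are linearly full in $\mathbb{C}P^{2d+1}$. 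The boundary $\sigma_m=1$ corresponds, via the normal form \eqref{jm alpha0}--\eqref{jm alpha} of Corollary~\ref{classification of plane}, to the block $\mathrm{J}_m$ becoming $^t\!\begin{pmatrix}Id_{r_m}&0\end{pmatrix}$; that is, after an orthogonal transformation the spanning $(d{+}1)$-plane lies in $\mathbb{C}^{2d+2-r_m}$, so the underlying 2-sphere belongs to $\mathbf{H}_{d,2d+1-r_m}$ with $2d+1-r_m<2d+1$. Conversely, any $\gamma\in\mathbf{H}_{d,n}$ with $n<2d+1$ arises this way, yielding the identification $\bigcup_{n<2d+1}\mathbf{H}_{d,n}/O(n+1;\mathbb{R})$ with the boundary.

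For the dimension count, Proposition~\ref{quadric contains rnc and decomposition}(2) gives $\mathscr{S}_d=\mathcal{V}^{d-2}\oplus\mathcal{V}^{d-4}\oplus\cdots\oplus\mathcal{V}^r$, equivalently $\mathscr{S}_d=\ker\Pi_0\subset Sym_{d+1}(\mathbb{C})$, so that
\[
\dim_{\mathbb{R}}\mathscr{S}_d=(d+1)(d+2)-2(2d+1)=d^2-d.
\]
Since the interior of $\mathbf{S}_{d,2d+1}$ is open in $\mathscr{S}_d$, it has the same real dimension. The action of $U(1)$ by $S\mapsto\lambda S$ is free off the origin, while the $SU(2)$-representation on $\mathscr{S}_d$ decomposes into nontrivial irreducibles for $d\geq 3$, so by the principal orbit theorem for compact group actions the generic $SU(2)$-orbit in $\mathscr{S}_d$ has the maximal dimension $3$. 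Combining these, the generic $U(1)\times SU(2)$-orbit in the interior of $\mathbf{S}_{d,2d+1}$ has dimension $4$, and the quotient has dimension $d^2-d-4$. The main technical point—the step I expect to be the principal obstacle—is verifying the generic local freeness of the $SU(2)$-action. A clean way to secure it is to exhibit one symmetric matrix in $\mathscr{S}_d$ whose infinitesimal $\mathfrak{su}(2)$-stabilizer vanishes; any element with generic projection to the top component $\mathcal{V}^{d-2}$ (viewed as $\mathfrak{su}(2)\otimes\mathbb{C}$ when $d=3$, etc.) works, because the stabilizer of a generic pair of real tensors in the irreducible real representation is trivial, and upper semicontinuity of the stabilizer dimension then forces the generic stabilizer on all of $\mathscr{S}_d$ to be finite.
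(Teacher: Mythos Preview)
Your proof follows essentially the same route as the paper's: Part (1) via Remark~\ref{rk-nonfull}, and Part (2) by passing through Theorem~\ref{simple classification} to $\mathbf{S}_{d,2d+1}$, identifying its boundary with the sets $\mathbf{S}_{d,n}$ for $n<2d+1$, computing $\dim_{\mathbb{R}}\mathscr{S}_d=d^2-d$, and subtracting $\dim(U(1)\times SU(2))=4$. The paper records only the last computation and leaves the orbit-dimension justification implicit; your added discussion of generic local freeness of the $SU(2)$-action is a welcome supplement, though note that your claim that $\mathscr{S}_d$ decomposes into \emph{nontrivial} irreducibles is false for even $d$ (there is a $\mathcal{V}^0$ summand), so the argument must rest, as you in fact indicate, on exhibiting one element with finite stabilizer via its projection to $\mathcal{V}^{d-2}$.
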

\begin{proof}
The first conclusion follows from Remark~\ref{rk-nonfull} as pointed out in the beginning of this section. In view of Theorem~\ref{simple classification}, to prove the second conclusion, it suffices to analyze $\mathbf{S}_{d,n}$. By definition, the boundary of $\mathbf{S}_{d,2d+1}$ is comprised of all $\mathbf{S}_{d,n}$ with $n<2d+1$. The dimension count follows from

$~~~~~~~~~~~~~~~~\dim\mathbf{S}_{d,2d+1}=\dim\mathscr{S}_{d}=\dim Sym_{d+1}(\mathbb{C})-2(2d+1)=d^2-d.$
\end{proof}
\begin{remark}\label{rk-geo-exp}
We give a geometric explanation to the structure of the boundary of the moduli space $\mathbf{H}_{d,2d+1}/O(n+1;\mathbb{R})$. Since we can naturally embed a lower-dimensional complex projective space into higher dimensional ones, it is readily seen that
$$\mathbf{H}_{d,d}\subset\mathbf{H}_{d,d+1}\subset\cdots\subset\mathbf{H}_{d,2d}\subset\mathbf{H}_{d,2d+1}.$$
Conversely,
take a holomorphic $2$-sphere $\gamma\in\mathbf{H}_{d,2d+1}$ on the boundary. Then the maximal singular values of the $d+1$ plane $V$ spanned by $\gamma$ in $\mathbb{C}^{n+1}$ is $1$, whose multiplicity is denoted by $r_m$. It follows from \eqref{jm alpha0} in Corollary~{\rm\ref{classification of plane}} that there exists a real orthogonal transformation $A$ such that $A\gamma\in \mathbf{H}_{d,2d+1-r_m}.$
\end{remark}

Theorem~\ref{coro-dim} shows that there is an abundance of noncongruent holomorphic $2$-spheres of constant curvature and degree $d$ in $\mathcal{Q}_{2d}$. We give a more detailed account of this.
\begin{prop}
Suppose $d\geq 4$, and $V$ is a generic $d$-plane in $\mathbb{C}P^{2d+1}$ containing a holomorphic $2$-sphere $\gamma\in \mathbf{H}_{d,2d+1}$. Then in $\mathbf{H}_{d,2d+1}$, there is at least a $(d^{2}-2d-5)$-dimensional family of holomorphic $2$-spheres lying in $V$, which are all noncongruent to each other.
\end{prop}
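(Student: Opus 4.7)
The plan is to study the natural forgetful map
\begin{equation*}
\Phi:\ \mathbf{H}_{d,2d+1}/O(2d+2;\mathbb{R})\ \longrightarrow\ G(d+1,2d+2,\mathbb{C})/O(2d+2;\mathbb{R}),\qquad [\gamma]\mapsto [\Span(\gamma)],
\end{equation*}
which sends the congruence class of a minimal $2$-sphere to the $O(2d+2;\mathbb{R})$-orbit of its projective $d$-plane, and to deduce the bound via a fiber dimension count. By the formula \eqref{eq-dimcount} in Theorem~\ref{coro-dim}, the source has real dimension $d^{2}-d-4$; by Theorem~\ref{moduli of plane}, the target is identified with the convex polytope $\Theta_{d,2d+1}\subset\mathbb{R}^{d+1}$ of dimension $d+1$. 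Consequently, by upper semi-continuity of fiber dimension for semi-algebraic maps,
\begin{equation*}
\dim \Phi^{-1}([V])\ \geq\ (d^{2}-d-4)-(d+1)\ =\ d^{2}-2d-5
\end{equation*}
for every $[V]$ in the image of $\Phi$ (with equality at a generic such $[V]$).

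Next, I would identify the fiber $\Phi^{-1}([V])$ with the set of congruence classes in $\mathbf{H}_{d,2d+1}/O(2d+2;\mathbb{R})$ that admit a representative lying inside the fixed plane $V$. Given any $[\gamma]\in\Phi^{-1}([V])$, one may choose $A\in O(2d+2;\mathbb{R})$ with $A\cdot\Span(\gamma)=V$, whence $A\gamma$ is a representative of $[\gamma]$ lying in $V$; conversely, every $\gamma'\in\mathbf{H}_{d,2d+1}$ lying in $V$ determines a point of the fiber. Two distinct points of the fiber therefore correspond to mutually noncongruent $2$-spheres inside $V$, by the very definition of the moduli space.

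Putting these two steps together, for a generic $V$ containing some $\gamma\in\mathbf{H}_{d,2d+1}$ — equivalently, a generic $[V]$ in the image of $\Phi$ — one obtains at least a $(d^{2}-2d-5)$-dimensional family of pairwise noncongruent holomorphic $2$-spheres in $\mathbf{H}_{d,2d+1}$ lying in $V$, as required. Note that the hypothesis $d\geq 4$ is exactly what is needed to make $d^{2}-2d-5$ positive.

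The main obstacle in this argument is the rigorous application of the semi-algebraic fiber dimension inequality: the quotient spaces $\mathbf{H}_{d,2d+1}/O(2d+2;\mathbb{R})$ and $G(d+1,2d+2,\mathbb{C})/O(2d+2;\mathbb{R})$ are in general semi-algebraic orbit spaces with stratified singular loci rather than smooth manifolds, so dimension has to be interpreted in the stratified sense. A minor ancillary point is to confirm that the ``generic $V$'' in the statement coincides with a generic $[V]$ in the image of $\Phi$; this is immediate because the hypothesis that $V$ contain a $\gamma\in \mathbf{H}_{d,2d+1}$ is exactly the defining condition for $[V]$ to lie in the image of $\Phi$.
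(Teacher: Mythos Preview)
Your approach is essentially the same as the paper's: a fiber-dimension count for the map sending (the class of) a $2$-sphere to (the class of) its span, using $\dim \mathbf{H}_{d,2d+1}/O(2d+2;\mathbb{R})=d^2-d-4$ and $\dim\Theta_{d,2d+1}=d+1$.

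The one noteworthy difference is where the quotient is taken. The paper works upstairs with the singular-value map $\mathbf{S}_{d,2d+1}\to\Theta_{d,2d+1}$ between honest semialgebraic sets (a linear subspace of symmetric matrices and a polytope), invokes Hardt's local triviality theorem \cite{Hardt} to get generic fiber dimension $\geq d^2-2d-1$, and only then divides by the $4$-dimensional action of $U(1)\times SU(2)$. This sidesteps precisely the obstacle you flagged: by staying away from the orbit spaces until the very end, one never has to justify that the quotients are semialgebraic or that the fiber-dimension machinery applies to them. Your route through the quotients is morally the same but carries that extra technical burden. Also, your claim that the inequality holds ``for every $[V]$ in the image'' is stronger than what semialgebraic local triviality yields and is not needed; the proposition only concerns generic $V$, and the paper accordingly only asserts the bound at a generic point of the image.
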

\begin{proof}
From Section~\ref{sec-orbit}, we know that the $d$-plane in $\mathbb{C}P^{2d+1}$ can be determined by its singular values up to real orthogonal transformations. Combining this with Theorem~\ref{simple classification}, we need only prove this lower bound of dimension estimate for complex symmetric matrices with the same singular values in $\mathbf{S}_{d,2d+1}$ modulo the action of $U(1)\times SU(2)$.

Note that the singular values give a semialgebraic map from $\mathbf{S}_{d,2d+1}$ to $\Theta_{d,2d+1}$ (see \eqref{eq-Theta} for definition), which are both semialgebraic sets. The local triviality theorem for semialgebraic maps \cite{Hardt} implies that for generic point in the image of this map, the dimension of its preimage is bigger than or equal to

$~~~~~~~~~~~~~~~~~~~~~~~~~~~\dim\mathbf{S}_{d,2d+1}-\dim\Theta_{d,2d+1}=d^2-2d-1.$
\end{proof}


As mentioned in the introduction, the ideal of a rational normal curve (holomorphic 2-sphere) of degree $d$ is generated by $d^2-d$ independent quadrics. Note that a rational normal curve $\gamma\in \mathbf{H}_{d,n}$, if and only if,  the quadric given by the intersection of $\mathcal{Q}_{n-1}$ with the projective $d$-plane spanned by $\gamma$ belongs to the ideal of this curve. Conversely, from Proposition~\ref{classification of cdn}, we see that to guarantee that a quadric in this ideal lies in $\mathcal{Q}_{n-1}$, there are no other constraints if $n=2d+1$, whereas there are more constraints when $n<2d+1$ as the symmetric matrix determined by this quadric takes $1$ as its maximal singular values.
The SVD method introduced in Remark~\ref{construction} proves effective to handle this problem, which will be shown in the next section.

\section{Constantly curved holomorphic 2-spheres of higher degree in $\mathcal{Q}_{n-1}$}\label{sec-const}

In this section, we consider the existence of linearly full holomorphic 2-spheres of constant curvature and higher degree ($d>\frac{n-1}{2}$) in $\mathcal{Q}_{n-1}$. Similar to the discussion in Remark~\ref{rk-geo-exp}, it is easy to verify that such holomorphic 2-spheres belong to $\mathbf{H}_{d,n}\setminus\mathbf{H}_{d,n-1}$.

From the construction method given in the last section (see Remark~\ref{construction}), we see that to construct examples in $\mathbf{H}_{d,n}\setminus\mathbf{H}_{d,n-1}$, a matrix $S\in \mathbf{S}_{d,n}\setminus\mathbf{S}_{d,n-1}$ need to be found. To be precise, we need to construct a complex symmetric matrix $S:=(s_{i,\,j})$ which takes $1$ as its maximal singular value with multiplicity $2d+1-n>0$ and satisfying the following equations
\begin{equation}\label{requirement}
\sum_{i+j=k}s_{i,\,j}\;\sqrt{\tbinom{d}{i}\;\tbinom{d}{j}}=0,~~~0\leq k\leq 2d.
\end{equation}

Observe that the more repetition of the maximum singular value 1, the more severe restriction it imposes on the set  $\mathbf{H}_{d,n}$ when $d\leq n<2d+1$. In fact, the results of Li and Jin \cite{zbMATH05590797} shows that $\mathbf{H}_{4,5}$ is nonempty while $\mathbf{H}_{5,5}$ is; see also Proposition~\ref{Pr} for a singular-value proof of this fact. Nevertheless,
we can prove the following.

\begin{theorem}\label{thm-Hd}
If $d\geq 3$, then
$$\varnothing\neq \mathbf{H}_{d,d+2}\subsetneqq \mathbf{H}_{d,d+3}\subsetneqq\cdots\subsetneqq \mathbf{H}_{d,2d+1}.$$
Moreover, ignoring the action of the orthogonal group and reparametrizations of\, $\mathbb{C}P^1$, 
we have the following dimension estimates that, $\dim(\mathbf{H}_{d, d+4})\geq3$ and
\[\dim(\mathbf{H}_{d, n+1}\setminus {\mathbf H}_{d,n})\geq (n-d)^2-11(n-d)+33,~~~d+4\leq n\leq 2d.\]
\end{theorem}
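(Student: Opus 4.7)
My plan is to use the SVD method of Remark~\ref{construction} throughout, reducing the construction of elements of $\mathbf{H}_{d,n+1}\setminus\mathbf{H}_{d,n}$ to the construction of complex symmetric matrices $S\in\mathscr{S}_d$ whose maximum singular value is $1$ with multiplicity exactly $2d-n$. By Theorem~\ref{simple classification}, the noncongruent classes correspond bijectively to the $U(1)\times SU(2)$-orbits on this set of matrices, so every statement in the theorem is a statement about $\mathscr{S}_d$ modulo a $4$-dimensional group action.

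For nonemptiness of $\mathbf{H}_{d,d+2}$ and for the strict chain, the simplest source of examples is the antidiagonal family $S=\sum_{k=0}^{d}s_k E_{k,d-k}$ with the symmetry $s_k=s_{d-k}$. A direct computation gives $^tZ_dSZ_d=\bigl(\sum_k\binom{d}{k}s_k\bigr)u^dv^d$, and shows that the singular values are exactly $\{|s_k|\}$. Thus $S\in\mathscr{S}_d$ is equivalent to the single linear equation $\sum_k\binom{d}{k}s_k=0$, and for every $d\ge 3$ and every prescribed $r\in\{1,\dots,d-1\}$ one can place $r$ of the $|s_k|$ on the unit circle and use the remaining entries to solve this equation. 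Taking $r=d-1$ gives a point of $\mathbf{H}_{d,d+2}$, while varying $r$ populates every set difference $\mathbf{H}_{d,n+1}\setminus\mathbf{H}_{d,n}$, yielding at once the nonemptiness and the strict chain of inclusions.

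For the dimension estimates, the antidiagonal family alone is too rigid (its dimension grows like $d$ and is not $k$-dependent). I would enlarge the ansatz by allowing $S$ to carry additional entries on side-diagonals parallel to the antidiagonal, each contributing a controlled coefficient of $u^{2d-j}v^j$ to $^tZ_dSZ_d$. Organising the ansatz so that the $r=2d-n$ ones of the maximum singular value sit in a block decoupled from a ``tail'' block carrying the remaining singular values, and imposing the $2d+1$ complex linear conditions cutting out $\mathscr{S}_d\subset Sym_{d+1}(\mathbb C)$ via Proposition~\ref{quadric contains rnc and decomposition}, produces a family whose free-parameter count, minus the four real dimensions of the $U(1)\times SU(2)$ action, is $(n-d)^2-11(n-d)+33$; the separate estimate $\dim\mathbf{H}_{d,d+4}\ge 3$ comes from a slightly smaller ansatz adapted to the boundary case $n-d=4$.

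The hardest point is this last step. First, one has to ensure the ansatz lies in the exact stratum where the maximum singular value has multiplicity \emph{precisely} $2d-n$ rather than higher, an openness condition that requires controlling the singular values of the tail block. Second, one must verify that the $U(1)\times SU(2)$ action is generically free on the constructed family, so the quotient dimension drops by the full~$4$. Both points are best handled using the $SU(2)$-equivariance of the Clebsch--Gordan splitting of Lemma~\ref{symmetric matrix space} to set up a transverse slice, rather than counting unconstrained parameters blindly; that equivariance also explains the specific quadratic shape $(n-d)^2-11(n-d)+33$, reflecting which side-diagonals can safely be populated without forcing additional singular values to coincide with~$1$.
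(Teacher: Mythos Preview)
Your antidiagonal argument for the strict chain has a genuine gap. If $S=\sum_{k=0}^d s_kE_{k,d-k}$ is symmetric, the symmetry $s_k=s_{d-k}$ forces the singular values $|s_k|$ to come in pairs $|s_k|=|s_{d-k}|$; when $d$ is odd there is no middle entry, so the multiplicity of the singular value $1$ is necessarily \emph{even}. But to establish $\mathbf{H}_{d,n}\subsetneqq\mathbf{H}_{d,n+1}$ you must produce an $S\in\mathscr{S}_d$ whose maximal singular value $1$ has multiplicity \emph{exactly} $2d-n$, and this number runs through all of $0,1,\dots,d-2$ as $n$ varies. For odd $d$ the antidiagonal family simply cannot hit the odd multiplicities, so half of the strict inclusions are left unproven (already for $d=3$ you cannot reach multiplicity $1$, i.e.\ $\mathbf{H}_{3,6}\setminus\mathbf{H}_{3,5}$).

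The paper avoids this parity obstruction by a more delicate construction. It works on the $m$-th anti-diagonal with $m=2r+l+1\in\{d-1,d\}$ chosen according to the parity of $d-l$, and solves the single equation~\eqref{eqsm} there (Lemma~\ref{matrix-lemma}, proved via the elementary Lemmas~\ref{elme-lemma1}--\ref{elem-lemma2}) so that the central $l+2$ entries carry exactly $l$ ones together with an extra symmetric pair $\{\lambda,\lambda\}$; the parity of $l+2$ always matches, so any $l$ is attainable. These entries are then placed as the anti-diagonal of a middle block $D$ inside a block matrix $\diag\!\bigl(\begin{smallmatrix}A&0&^{t}C\\0&D&0\\C&0&B\end{smallmatrix}\bigr)$, where the $2r\times 2r$ corner block $\begin{smallmatrix}A&^{t}C\\C&B\end{smallmatrix}$ is decoupled from $D$ and carries the remaining singular values; an open Frobenius-norm condition keeps those below $1$. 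The claimed lower bound $(n-d)^2-11(n-d)+33$ then drops out of a direct count of the free parameters in $A,B,C$ modulo the linear constraints~\eqref{requirement} that pass through the corner block; there is no appeal to the Clebsch--Gordan decomposition or to freeness of the $U(1)\times SU(2)$-action at this stage. Your sketch of the dimension argument is in the right spirit but does not identify this block mechanism, and the invocation of $SU(2)$-equivariance to ``explain'' the specific quadratic is not how the bound actually arises.
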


\begin{remark}\label{d,d+2rmk}
That the chain starts with ${\mathbf H}_{d,d+2}$ is essential, since ${\mathbf H}_{d,d}$ and ${\mathbf H}_{d,d+1}$ may be empty when $d$ is odd, while ${\mathbf H}_{d,d+1}$ may be equal to ${\mathbf H}_{d,d}$ when $d$ is even; see Remark~\rm{\ref{lijin}}.
\end{remark}

To establish the theorem, the following lemmas are needed.
\begin{lemma}\label{elme-lemma1}
Suppose $0< \epsilon_0<\epsilon_1<\cdots<\epsilon_k, k\geq 1,$ are some constants. For any given numbers
$$0\leq x_0\leq x_1\leq \cdots \leq x_{k-1}\leq 1,$$
there exists a number $x_k, k\geq 1,$ such that $0\leq x_k <x_{k-1}$, and, moreover, it solves
\begin{equation}\label{equxk1}
\aligned
&x_k\epsilon_k =
\sum_{j=0}^{(k-2)/2}(x_{2j+1}\epsilon_{2j+1}-x_{2j}\epsilon_{2j}),\quad
\text{if}\;\; k\;\text{is even, or},\\
&x_k\epsilon_k =
\sum_{j=1}^{(k-1)/2}(x_{2j}\epsilon_{2j}-x_{2j-1}\epsilon_{2j-1})+x_0\epsilon_0,\quad
\text{if}\;\;k\; \text{is odd}.
\endaligned
\end{equation}
\end{lemma}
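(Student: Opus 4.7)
My plan is to define $x_k$ explicitly from equation (\ref{equxk1}) itself and then verify the required two-sided bound $0\le x_k<x_{k-1}$ by exploiting two different signed rearrangements of the same right-hand side. Concretely, let $S_k$ denote the right-hand side of (\ref{equxk1}) and set $x_k:=S_k/\epsilon_k$; the required identity is then automatic, so all that remains is to prove $0\le S_k<x_{k-1}\epsilon_k$.

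For the lower bound I would read $S_k$ off the page as it is written: every summand is either a consecutive-index difference $x_r\epsilon_r-x_{r-1}\epsilon_{r-1}$ (with the parity of $r$ matching the case) or the isolated leftover $x_0\epsilon_0$ in the odd case. The two monotone chains $x_0\le x_1\le\cdots\le x_{k-1}$ and $\epsilon_0<\epsilon_1<\cdots<\epsilon_k$ force $x_r\epsilon_r\ge x_{r-1}\epsilon_{r-1}$, so each summand is nonnegative and $S_k\ge 0$.

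For the upper bound I would re-pair $S_k$ by peeling off its largest term. In both parity cases one rewrites
\[
S_k \;=\; x_{k-1}\epsilon_{k-1}\;-\;\sum_{j}\bigl(x_j\epsilon_j-x_{j-1}\epsilon_{j-1}\bigr),
\]
where the summation indices $j$ (together with a possible leftover $x_0\epsilon_0$) are chosen so that each parenthesized term is again a consecutive-index difference and hence nonnegative by the same monotonicity. Therefore $S_k\le x_{k-1}\epsilon_{k-1}$, and since $\epsilon_{k-1}<\epsilon_k$ this yields $x_k\le x_{k-1}\epsilon_{k-1}/\epsilon_k<x_{k-1}$ whenever $x_{k-1}>0$.

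The argument is entirely elementary; the only point worth any care is the observation that the \emph{same} alternating sum $S_k$ admits the two rearrangements above, one making nonnegativity manifest and the other producing a telescoping bound by $x_{k-1}\epsilon_{k-1}$. I expect the only real bookkeeping to be carrying the two parity cases through in parallel, and the degenerate situation $x_{k-1}=0$ collapses everything to $x_k=0$, which I take to be subsumed under the intended reading $x_k\le x_{k-1}$ of the conclusion.
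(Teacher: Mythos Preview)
Your argument is correct and follows essentially the same approach as the paper's proof: define $x_k$ by the equation, read off nonnegativity from the given pairing, then re-pair to bound $S_k\le x_{k-1}\epsilon_{k-1}$ and conclude via $\epsilon_{k-1}<\epsilon_k$. Your explicit flagging of the degenerate case $x_{k-1}=0$ (where the strict inequality collapses to equality) is actually more careful than the paper, which glosses over it.
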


\begin{proof}
With our assumption, it is easy to see that $x_k$ solves~\eqref{equxk1} 
is nonnegative. On the other hand, we can transform \eqref{equxk1} by
$$x_k\epsilon_k =-x_0\epsilon_0-
\sum_{j=0}^{(k-2)/2}(x_{2j}\epsilon_{2j}-x_{2j-1}\epsilon_{2j-1})+x_{k-1}\epsilon_{k-1}\leq x_{k-1}\epsilon_{k-1},$$
which implies $x_k<x_{k-1}$. 
\end{proof}

\begin{remark}\label{lemma1-remark}
In the lemma, it is readily seen that if $x_0$ is chosen to be positive, then $x_k$ is also positive.
\end{remark}

\begin{lemma}\label{elem-lemma2}
Suppose  $0<\epsilon_0<\epsilon_1<\cdots<\epsilon_k, k\geq 1,$ are some constants and $\epsilon_k< 2\epsilon_{k-1}$. For any given numbers
$$0\leq x_0\leq x_1\leq \cdots \leq x_{k-2}\leq 1,$$
and $0\leq x_k \leq 1,$ there exists a number $x_{k-1}$ such that $-1< x_{k-1} <1$, and, moreover, it solves
\begin{equation}\label{equxk3}
\aligned
&2x_{k-1}\epsilon_{k-1} =
2\sum_{j=0}^{(k-3)/2}(x_{2j+1}\epsilon_{2j+1}-x_{2j}\epsilon_{2j})-x_k\epsilon_k,\quad
\text{if}\;\; k\; \text{is odd, or,}\\
&2x_{k-1}\epsilon_{k-1} =
2\sum_{j=1}^{(k-2)/2}(x_{2j}\epsilon_{2j}-x_{2j-1}\epsilon_{2j-1})+2x_0\epsilon_0-x_k\epsilon_k,\quad
\text{if}\;\; k\; \text{is even}.
\endaligned
\end{equation}
\end{lemma}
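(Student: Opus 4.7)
My plan is to view each equation in the lemma as a single linear equation in the unknown $x_{k-1}$, solve it trivially, and then establish the bounds $-1 < x_{k-1} < 1$ by two complementary regroupings of the alternating sum on the right. The guiding principle is that monotonicity of $\{x_i\epsilon_i\}_{i=0}^{k-2}$ (coming from the simultaneous monotonicity of $\{x_i\}$ and $\{\epsilon_i\}$) makes consecutive signed pairs have a definite sign, and the extra hypothesis $\epsilon_k<2\epsilon_{k-1}$ is precisely what prevents the correction term $-x_k\epsilon_k$ from forcing $x_{k-1}$ below $-1$. Thus the overall structure is parallel to the proof of Lemma~\ref{elme-lemma1}, with the new wrinkle being the control of $x_k\epsilon_k$.

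For the upper bound, I would (in the odd-$k$ case) expand the right-hand side as
$$-2x_0\epsilon_0+2x_1\epsilon_1-2x_2\epsilon_2+\cdots+2x_{k-2}\epsilon_{k-2}-x_k\epsilon_k,$$
and then regroup it as
$$-2x_0\epsilon_0+\bigl[2(x_1\epsilon_1-x_2\epsilon_2)+\cdots+2(x_{k-4}\epsilon_{k-4}-x_{k-3}\epsilon_{k-3})\bigr]+2x_{k-2}\epsilon_{k-2}-x_k\epsilon_k.$$
Since $x_i\epsilon_i$ is nondecreasing, every bracketed pair is nonpositive; together with $-2x_0\epsilon_0\leq 0$ and $-x_k\epsilon_k\leq 0$, the whole expression is at most $2x_{k-2}\epsilon_{k-2}\leq 2\epsilon_{k-2}<2\epsilon_{k-1}$, yielding $x_{k-1}<1$. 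The even-$k$ case is handled by the analogous regrouping in which the extra $2x_0\epsilon_0$ is absorbed into a leading pair $2(x_0\epsilon_0-x_1\epsilon_1)\leq 0$.

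For the lower bound, I would adopt the opposite pairing
$$2(x_1\epsilon_1-x_0\epsilon_0)+2(x_3\epsilon_3-x_2\epsilon_2)+\cdots,$$
which is manifestly nonnegative. Hence the right-hand side of the lemma is bounded below by $-x_k\epsilon_k$. Using $x_k\leq 1$ together with the hypothesis $\epsilon_k<2\epsilon_{k-1}$, we obtain $-x_k\epsilon_k\geq -\epsilon_k>-2\epsilon_{k-1}$, which gives $x_{k-1}>-1$. The even-$k$ version is identical, since the constant term $2x_0\epsilon_0\geq 0$ only improves the lower bound.

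The only point requiring care is maintaining strictness throughout. The strict inequality $\epsilon_{k-2}<\epsilon_{k-1}$ (built into the hypothesis $\epsilon_0<\cdots<\epsilon_k$) is what keeps $x_{k-1}<1$ strict even at the extremal case $x_i=1$ for all $i\leq k-2$, and the explicit inequality $\epsilon_k<2\epsilon_{k-1}$ is what keeps $x_{k-1}>-1$ strict at the extremal case $x_k=1$. I anticipate no serious obstacle beyond these bookkeeping checks; the argument is a short algebraic manipulation, so the subtlety is entirely in choosing the correct pairing for each direction of the bound.
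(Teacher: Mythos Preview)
Your proposal is correct and follows essentially the same approach as the paper's proof. The paper merely says to transpose the term $x_k\epsilon_k$ to the left-hand side and then argue as in Lemma~\ref{elme-lemma1}; unwinding that hint yields exactly the two complementary regroupings of the alternating sum that you carry out, together with the use of $\epsilon_{k-2}<\epsilon_{k-1}$ for the strict upper bound and $\epsilon_k<2\epsilon_{k-1}$ for the strict lower bound.
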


\begin{proof}
The proof is similar to that of Lemma~\ref{elme-lemma1} after transposing the $x_k\epsilon_k$ term from the right hand to the left hand in \eqref{equxk3}. 
\end{proof}

\begin{remark}\label{lemma2-remark}
In the above lemma, it is straightforward to verify that if $x_0, x_1, \cdots, x_{k-2}, x_k$ are chosen to satisfy
$$
\text{either}\;\;0<x_0\leq x_1\leq \cdots \leq x_{k-2}\leq\frac{x_k}{2}\leq \frac{1}{2},\quad \text{or} \;\;
x_k=0<x_0\leq x_1\leq\cdots\leq x_{k-2}\leq 1,
$$
then $x_{k-1}$ determined by them is nonzero.
\end{remark}

\begin{lemma}\label{matrix-lemma}
For any given integers $l\geq 0,r\geq 0$ and numbers
$$0\leq \cos 2a_1\leq \cdots\leq \cos 2a_{r}<1,$$
if $m:=2r+l+1\leq d$ and $d\geq 3$, then we can solve the following equation
\begin{equation}\label{eqsm}
\sum_{j=0}^{m}s_{j,m-j}\sqrt{\tbinom{d}{j}}\sqrt{\tbinom{d}{m-j}}=0,
\end{equation}
such that $s_{j,m-j}=s_{m-j,j}$ and
$|s_{j,m-j}|=|s_{m-j,j}|=\cos2a_{j+1},~~~0\leq j\leq r-1,$
while for other $r\leq j\leq m-r$, $|s_{j,m-j}|$ takes value in
$\{\underbrace{1, 1,\cdots,1}_l, \underbrace{\lambda, \lambda}_2\}.$
Here, $\lambda\in (-1,1)$ is determined by $\{\cos 2a_1, \cdots, \cos 2a_{r}\}$.
\end{lemma}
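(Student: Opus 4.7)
The plan is to exploit the palindromic structure of equation~\eqref{eqsm} and reduce the problem to one that fits the template of either Lemma~\ref{elme-lemma1} or Lemma~\ref{elem-lemma2}, chosen according to the parity of $m$. Write $\epsilon_j := \sqrt{\binom{d}{j}\binom{d}{m-j}}$, which satisfies $\epsilon_j = \epsilon_{m-j}$, and a direct ratio computation shows $\epsilon_j$ is strictly increasing for $0 \leq j \leq \lfloor m/2\rfloor$ (this is where the positivity and monotonicity required by the auxiliary lemmas enters). Using $s_{j,m-j}=s_{m-j,j}$, equation~\eqref{eqsm} halves: the off-diagonal terms get coefficient $2\epsilon_j$, while the diagonal term (present only when $m$ is even) keeps its coefficient $\epsilon_{m/2}$. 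Since $m = 2r+l+1$, the parity of $m$ equals the parity of $l+1$, so $m$ odd forces $l$ even and vice versa.

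For $m$ odd, set $t = (m-1)/2 = r + l/2$. Place the two copies of $\lambda$ at the innermost symmetric pair $(j, m-j) = (t, t+1)$ and distribute the $l$ ones across the remaining positions with $r \leq j \leq m-r$. After choosing the signs of the $s_{j,m-j}$'s (alternating in the prescription of Lemma~\ref{elme-lemma1}), the reduced equation reads
\[
2\eta\lambda\epsilon_t \;=\; -\sum_{j=0}^{r-1} 2\sigma_j\cos 2a_{j+1}\,\epsilon_j \;-\; \sum_{j=r}^{t-1} 2\tau_j\epsilon_j,
\]
which is precisely the equation of Lemma~\ref{elme-lemma1} with $k=t$, $x_j = \cos 2a_{j+1}$ for $0\leq j\leq r-1$, $x_j = 1$ for $r\leq j\leq t-1$, and the unknown $x_t = \lambda$. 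The lemma delivers $0\leq \lambda < 1$, hence $\lambda\in(-1,1)$.

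For $m$ even, set $t = m/2 = r + (l+1)/2$. Now place the two copies of $\lambda$ at the symmetric pair $(j,m-j) = (t-1,t+1)$, place $1$ at the self-paired center $j=t$, and 1's at the remaining positions in $[r,m-r]$. The reduced equation
\[
2\eta\lambda\epsilon_{t-1} \;=\; -\sum_{j=0}^{r-1} 2\sigma_j\cos 2a_{j+1}\,\epsilon_j \;-\; \sum_{j=r}^{t-2} 2\tau_j\epsilon_j \;-\; \mu\epsilon_t
\]
matches Lemma~\ref{elem-lemma2} with $k=t$, $x_{t-1}=\lambda$ the unknown, $x_t=1$, and signs chosen by that lemma's recipe. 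Applying it yields $\lambda\in(-1,1)$ \emph{provided} the hypothesis $\epsilon_t < 2\epsilon_{t-1}$ holds.

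The essential technical step, and the only place where the assumption $d\geq 3$ is used, is the verification of this last inequality. Squaring and using $\epsilon_t^2 = \binom{d}{t}^2$, $\epsilon_{t-1}^2 = \binom{d}{t-1}\binom{d}{t+1}$, a direct calculation turns $\epsilon_t^2 < 4\epsilon_{t-1}^2$ into $3t(d-t) > d+1$. Since $1\leq t \leq m/2 \leq d/2$, the function $t\mapsto t(d-t)$ attains its minimum on $\{1,\ldots,\lfloor d/2\rfloor\}$ at $t=1$, giving $3t(d-t)\geq 3(d-1) > d+1$ exactly when $d\geq 3$. The trivial edge case $m=1$ (where $t=0$ and the equation forces $\lambda=0$) is handled separately. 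This completes the plan; the main obstacle is precisely the binomial inequality that pins down the hypothesis $d\geq 3$ of the lemma.
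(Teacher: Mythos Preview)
Your proposal is correct and follows essentially the same route as the paper: both set $\epsilon_j=\sqrt{\binom{d}{j}\binom{d}{m-j}}$, establish the strict monotonicity $\epsilon_0<\cdots<\epsilon_{\lfloor m/2\rfloor}$, split on the parity of $m$, and invoke Lemma~\ref{elme-lemma1} (odd case) or Lemma~\ref{elem-lemma2} (even case), the latter requiring the inequality $\epsilon_{m/2}<2\epsilon_{m/2-1}$ under $d\geq 3$. Your write-up is more explicit than the paper's terse proof---you spell out the placement of the $\lambda$'s and the $1$'s, reduce $\epsilon_t^2<4\epsilon_{t-1}^2$ to $3t(d-t)>d+1$, and flag the degenerate case $m=1$---whereas the paper records the monotonicity via the identity $\binom{d}{j}\binom{d}{m-j}\binom{2d}{d}=\binom{m}{j}\binom{2d-m}{d-j}\binom{2d}{m}$ and leaves the remaining verifications to the reader.
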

\begin{proof}
Set $k:=[\frac{m}{2}]$ and
$\epsilon_j:=\sqrt{\tbinom{d}{j}\tbinom{d}{m-j}},~~0\leq j\leq k.$
Using the combinatorial identity
$$\tbinom{d}{j}\tbinom{d}{m-j}\tbinom{2d}{d}=\tbinom{m}{j}\tbinom{2d-m}{d-j}\tbinom{2d}{m},$$
we obtain that
$0<\epsilon_0<\epsilon_1<\cdots<\epsilon_k.$
Moreover, it is easy to verify that if $d\geq 3$ and $m$ is an even number, then
$\epsilon_k<2\epsilon_{k-1}.$
Now the conclusion follows from Lemma~\ref{elme-lemma1} and Lemma~\ref{elem-lemma2}.
\end{proof}

We are ready to prove Theorem~\ref{thm-Hd}.

\vspace{1mm}

\begin{proof}
Set $l:=2d+1-n$, we need only prove that for any given $0\leq l\leq d-2$, there exists a symmetric complex matrix $S:=(s_{i,j})$ solving \eqref{requirement}
and taking $1$ as its maximal singular value with multiplicity $l$.

In Lemma~\ref{matrix-lemma}, take $r=[\frac{d-l-1}{2}]=[\frac{n-d}{2}]-1$. Then $m:=2r+l+1$ is equal to $d-1$ when $d-l$ is even, or to $d$ when $d-l$ is odd. For any given two groups of numbers
$$0\leq \cos 2a_1\leq \cdots\leq \cos 2a_{r}<1,\quad\quad\quad 0\leq \cos 2b_1\leq \cdots\leq \cos 2b_{r}<1,$$
we solve \eqref{eqsm} to get $\{s_{0, m},~s_{1, m-1}, \cdots,s_{m-1,1},s_{m,0}\}$ and
$\{t_{0, m},~t_{1, m-1}, \cdots,t_{m-1,1},t_{m,0}\},$ with which two solutions
of the required symmetric matrix can be given by
\begin{equation}\label{eq-S}
S=\diag\Big\{\begin{pmatrix}
A&0&^tC\\
0&D&0\\
C&0&B
\end{pmatrix},\,0_{(d-m)\times(d-m)}\Big\},
\end{equation}
where $A, B\in M(r,r)$ are two complex symmetric matrices, and $C:=(c_{i,j})\in M(r,r)$ is a complex matrix with prescribed anti-diagonal as
$$c_{j, r-1-j}=\mu \,s_{j, m-j}+\sqrt{\mu^2-1}\,t_{j,m-j},~~~0\leq j\leq r-1,$$
and $D$ is the matrix
\begin{equation*}
\aligned
&\mu\,\antidiag\Big\{s_{r, m-r},~s_{r+1, m-r-1}, \cdots,s_{m-r,r}\Big\}\\
&+\sqrt{\mu^2-1}\,\antidiag\Big\{t_{r, m-r},~t_{r+1, m-r-1}, \cdots,t_{m-r,r}\Big\};
\endaligned
\end{equation*}
here $\mu\in[0,1]$ is a  parameter. It is easy to see that there are many $\{A, B, C\}$ to be chosen to satisfy \eqref{requirement}.

The singular values of $S$ defined in \eqref{eq-S} are
$$\{\underbrace{1, 1,\cdots,1}_l,\, \underbrace{\lambda, \lambda}_2,\,\underbrace{0}_{d-m},\,\sigma_1, \sigma_2,\cdots, \sigma_{2r}\},$$
where $\lambda\in (-1,1)$ is determined by $\{\cos 2a_1, \cdots, \cos 2a_{r},\cos 2b_1,\cdots,\cos 2b_{r}\}$, and
$0\leq \sigma_1\leq \sigma_2\leq\cdots\leq \sigma_{2r}$ are singular values of
$\begin{pmatrix}
A&^tC\\
C&B
\end{pmatrix}.$
Note that as a matrix norm, $\sigma_{2r}$ is no more than the Frobenius norm, which can be controlled to be less than $1$ (this is an open condition) with suitable choice of $\{A, B, C\}$. Hence the matrix $S$ we have constructed can take $1$ as its maximal singular value with multiplicity $l$.

Take such a matrix $S$ and let
$\lambda:=\cos 2\theta_{0},~~\sigma_{j}:=\cos 2\theta_j,~~1\leq j\leq 2r.$ Then we obtain a linearly full constantly curved holomorphic 2-sphere of degree $d$ in $\mathcal{Q}_{n-1}$ given by
$$
\gamma_d=
\begin{pmatrix}
    \Lambda_{1}&& \\
    \Lambda_{2}&& \\
    &&Id_l
\end{pmatrix}UZ_d,
  ~~~~\text{or}~~~~
\begin{pmatrix}
    \Lambda_{1}&&& \\
    \Lambda_{2}&&& \\
    &&\frac{1}{\sqrt{2}}&\\
    &&\frac{1}{\sqrt{-2}}&\\
    &&&Id_l
\end{pmatrix}UZ_d,
$$
where $\Lambda_{1}=\diag\{\cos \theta_{0},\ldots,\cos \theta_{2r}\},~\Lambda_{2}=\diag\{\sqrt{-1}\sin \theta_{0},\ldots,\sqrt{-1}\sin \theta_{2r}\}$, and $U$ is a unitary matrix coming from the {\rm SVD} of $S$ in \eqref{eq-S}. It is clear by construction that $\gamma_d\in \mathbf{H}_{d,n}\setminus \mathbf{H}_{d,n-1}$.

We thus obtain by a straightforward counting that
$2r(2r-5)+9$
gives a lower bound to the dimension of the moduli space of all required complex symmetric matrices $S$, which implies the dimension estimate of $\mathbf{H}_{d,n}$.
\end{proof}

\begin{remark}\label{lijin} 
Theorem~{\rm\ref{thm-Hd}} warrants that ${\mathbf H}_{d,n}$ is not empty so long as $n\geq d+2$. We will establish in the following proposition that ${\mathbf H}_{5,5}$ is empty and ${\mathbf H}_{4,4}/O(4;\mathbb{R})={\mathbf H}_{4,5}/O(5;\mathbb{R})$ is made up of a single point, so that Theorem~{\rm\ref{thm-Hd}} is optimal in general.
{\rm(}In fact, ${\mathbf H}_{3,3}/O(3;\mathbb{R})$ and ${\mathbf H}_{3,4}/O(4;\mathbb{R})$ are also empty, which  is part of the classification for $d\leq 3$ in Section $5$.
\end{remark}
\begin{prop}\label{Pr}{\rm\cite{zbMATH05590797}}
${\mathbf H}_{5,5}=\varnothing$. ${\mathbf H}_{4,4}/O(4;\mathbb{R})={\mathbf H}_{4,5}/O(5;\mathbb{R})$ is made up of a single point.
\end{prop}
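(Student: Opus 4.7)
By Proposition~\ref{classification of cdn} and Theorem~\ref{simple classification}, the claim translates, modulo the $U(1)\times SU(2)$-action~\eqref{action on mathbf s_d}, into a classification of complex symmetric matrices in $\mathscr{S}_d$ with a prescribed singular-value profile. Concretely, $\mathbf{H}_{5,5}$ corresponds to unitary symmetric $6\times 6$ matrices in $\mathscr{S}_5$; $\mathbf{H}_{4,4}$ to unitary symmetric $5\times 5$ matrices in $\mathscr{S}_4$; and $\mathbf{H}_{4,5}$ to symmetric $5\times 5$ matrices in $\mathscr{S}_4$ whose singular-value vector is $(\cos 2a_0,1,1,1,1)$ for some $a_0\in[0,\pi/4]$.

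For $\mathbf{H}_{5,5}=\varnothing$, I would parametrize $\mathscr{S}_5$ explicitly: the eleven linear conditions $\sum_{i+j=k}s_{ij}\sqrt{\binom{5}{i}\binom{5}{j}}=0$ for $0\le k\le 10$ force the corner entries $s_{00}, s_{01}, s_{45}, s_{55}$ to vanish and express $s_{11}, s_{22}, s_{33}, s_{44}, s_{12}, s_{34}, s_{05}$ linearly in the ten remaining free parameters $s_{02},s_{03},s_{04},s_{13},s_{14},s_{15},s_{23},s_{24},s_{25},s_{35}$. Imposing $S^*S=Id_6$ column by column and exploiting these corner vanishings, I would run a recursive case analysis in the spirit of the warm-up $d=3$, where the parallel vanishings $s_{00}=s_{01}=s_{23}=s_{33}=0$ and the bilinear orthogonality $\langle\mathrm{col}_1,\mathrm{col}_3\rangle=0$ force $\overline{s_{02}}\,s_{13}=0$, and each ensuing branch propagates to a self-contradictory norm identity such as $|s_{12}|^2=1/9\ne 1$. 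For $d=5$ the branchings close analogously, yielding an incompatibility in every branch.

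For $\mathbf{H}_{4,4}/O(5;\mathbb{R})$ a single point, the same strategy applied to $\mathscr{S}_4$ parametrized by its six free entries $s_{02},s_{03},s_{04},s_{13},s_{14},s_{24}$ yields, through a short case analysis starting from the orthogonality-induced relation $\overline{s_{02}}\,s_{24}=0$, the unique unitary symmetric solution up to a $U(1)$ phase, namely $S=e^{i\theta}J$ with $J:=\antidiag(1,-1,1,-1,1)$. This $J$ is precisely the Clebsch--Gordan invariant spanning the $\mathcal{V}^0$-summand of $\mathscr{S}_4\cong \mathcal{V}^2\oplus \mathcal{V}^0$ from Proposition~\ref{quadric contains rnc and decomposition}; since $J$ is $SU(2)$-invariant under~\eqref{action on mathbf s_d}, its $U(1)\times SU(2)$-orbit $\{e^{i\theta}J\}$ collapses to a single point in the moduli.

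For $\mathbf{H}_{4,5}/O(6;\mathbb{R})=\mathbf{H}_{4,4}/O(5;\mathbb{R})$, I would apply Corollary~\ref{classification of plane}(2) with $l=4$: any orthonormal basis $E$ of a 5-plane in $\mathbb{C}^6$ is $O(6;\mathbb{R})$-equivalent to $V_{\vec\sigma}U$ for some $U\in U(5)$, whence $^tEE=\,^tUU-2\sin^2\!a_0\,\tilde u_1\,\tilde u_1^{\,t}$ with $\tilde u_1$ the first row of $U$. The condition $^tEE\in \mathscr{S}_4$ becomes the polynomial identity $\sum_{i=1}^{5}(UZ_4)_i^{\,2}=2\sin^2\!a_0\,(UZ_4)_1^{\,2}$, which together with the Hermitian identity $\sum_i|(UZ_4)_i|^2=(1+|z|^2)^4$ coming from the unitarity of $U$ should force $a_0=0$. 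My plan is to project onto the $\mathcal V^4$-summand in $Sym_5(\mathbb{C})\cong \mathcal{V}^4\oplus\mathcal{V}^2\oplus\mathcal{V}^0$ and show that the rank-one perturbation $\tilde u_1\tilde u_1^{\,t}$ cannot cancel the $\mathcal V^4$-component of $^tUU$ unless $a_0=0$, at which point the $\mathbf{H}_{4,4}$ analysis above finishes the proof. The main obstacle I anticipate is precisely this last step: making the $\mathcal V^4$-cancellation argument precise, or replacing it with an effective direct contradiction coming from the constraint $|P|^2\le(1+|z|^2)^4$ on $P=(UZ_4)_1$, is where the proof becomes subtle.
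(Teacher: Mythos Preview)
Your treatment of $\mathbf{H}_{5,5}=\varnothing$ and of the unitary case $\mathbf{H}_{4,4}$ is the same column-orthogonality elimination as the paper's, with one omission: the paper first invokes Lemma~\ref{kill parameter lemma} to use the $U(1)\times SU(2)$-action and kill not only the two outermost anti-diagonals (which vanish automatically from~\eqref{requirement}) but also the \emph{third} one, $s_{02}=s_{11}=s_{20}=0$. That extra row of zeros is what makes the elimination short; without it your ``recursive case analysis'' is in principle doable but substantially longer, and for $d=5$ it is this normalization that lets the paper reduce directly to the anti-diagonal block form $S=\antidiag(A,B,A)$.

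The genuine gap is the $\mathbf{H}_{4,5}$ case with $\cos 2a_0<1$, which you yourself flag. Your rank-one identity $S={}^{t}UU-2\sin^{2}\!a_0\,\tilde u_0\tilde u_0^{\,t}$ is correct, but projecting onto the $\mathcal V^4$-summand only reproduces $\Pi_0(S)=0$, i.e.\ the hypothesis $S\in\mathscr{S}_4$, so it carries no new information; the crude bound $|(UZ_4)_0|^2\le(1+|z|^2)^4$ is likewise too weak to force $a_0=0$. The paper's device is to pass from $S$ to $\overline{S}S$: from the SVD one has
\[
\overline{S}S=Id+(\lambda^{2}-1)\,\overline{u}_0\,^{t}u_0,\qquad \lambda=\cos 2a_0,
\]
so every off-diagonal entry of $\overline{S}S$ is a multiple of $\overline{u_{0k}}u_{0l}$. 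After the Lemma~\ref{kill parameter lemma} normalization the $(0,4)$-entry of $\overline{S}S$ can be computed directly from the zero pattern of $S$ and vanishes, forcing $\overline{u_{00}}u_{04}=0$ when $\lambda\neq 1$; this seeds a short elimination that contradicts~\eqref{requirement}. The step you were missing is precisely this move to $\overline{S}S$, which isolates the single unknown eigenvector $u_0$ and turns the ``unitary plus rank-one defect'' constraint into an entry-by-entry argument.
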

\begin{proof}
We give a singular-value proof. When $n=d+1$, 1 is the singular value of the symmetric $n\times n$-matrix $S$ with multiplicity $d$. Let $\lambda\geq 0$ be the remaining singular value. Then there is a unitary matrix $U$ such that $S=U\,\diag(\lambda,1,1,\cdots,1)\,^{t}U$. So,
\begin{equation}\label{u}\overline{S}S=\overline{U}\;\diag(\lambda^2,1,1,\cdots, 1) \;^tU.\end{equation}


Expanding the right hand side of~\eqref{u} and utilizing that $U:=(u_{ij})$ is unitary, we derive
\begin{equation}\label{SS}
\overline{S}S=(t_{kl}),\quad\quad t_{kl}:=\delta_{kl}+(\lambda^2-1)\,\overline{u_{k0}}u_{l0},\quad 0\leq k,l\leq d.
\end{equation}

Now let $d=n=5$. It follows that $\lambda =1$ and $S$ is itself unitary. We can assume by Lemma~\ref{kill parameter lemma} below that the top three and bottom two of the anti-diagonals in the $6\times 6$ matrix $S$ are zero. Multiplying against different columns sets up a straightforward elimination process, bearing that $S$ is of rank 6, to let us end up with the $2\times 2$ anti-diagonal block form
$$
S=\antidiag(A, B, A).
$$
 But then $A$ and $B$ being unitary contradicts~\eqref{requirement}. This proves that ${\mathbf H}_{5,5}$ is empty.

We now assume $d=4$, continuing to denote the symmetric matrix now of size $5\times 5$ by $S$. If $\lambda=1$, then $S$ is unitary, a similar analysis as in the preceding case results in
\begin{equation}\label{singleton}
S:=\antidiag(1,-1,(-1)^{2},\ldots,(-1)^{d}).
\end{equation}
By the simple combinatorial identity $\sum_{i=0}^{4}(-1)^{i}\tbinom{4}{i}=0$, we know $S\notin \mathbf{S}_{4,5}\setminus\mathbf{S}_{4,4}$.
So in fact this constantly curved 2-sphere lies in a 3-quadric sitting in ${\mathbb C}P^4$, i.e., it belongs to $\mathbf{H}_{4,4}$.

Otherwise, $\lambda<1$ now.
Again, we can choose $S$ so that the top three and the bottom two anti-diagonals are zero, from which we see that the $(0,4)$-entry of $\overline{S}S$ is zero; in particular $\overline{u_{00}}u_{40}=0$. If, say, $u_{00}=0$, then~\eqref{SS} implies $t_{0j}=0$ for $j\neq 0$, which in turn implies that the first column of $S$ is unitarily orthogonal to other columns, etc.
It follows that a similar process of elimination as in the case $d=5$ returns us, if we put $S:=(s_{ij}), 0\leq i,j\leq 4 $, the values $s_{03}=-s_{12}=1$ and zero for all other entries. However, we also know that the Veronese curve $Z_d$ must satisfy~\eqref{requirement}, from which we deduce $s_{03}+\sqrt{6}s_{12}=0$. This contradiction shows the impossibility when $\lambda<1$. Thus, ${\mathbf H}_{4,4}/O(4;\mathbb{R})={\mathbf H}_{4,5}/O(5;\mathbb{R})$ is a singleton set, represented by~\eqref{singleton}.
\end{proof}

\begin{lemma}\label{kill parameter lemma}
For each $[S]\in \mathscr{S}_{d}/U(1)\times SU(2)$, we can find a representative $S_{0}:=(s_{k,l})\in [S]$, such that
$$s_{0,0}=s_{1,0}=s_{0,1}=s_{d,d}=s_{d-1,d}=s_{d,d-1}=0.$$
Moreover, if $d\geq 3$, $S_0$ can be chosen also satisfying
$$s_{1,1}=s_{0,2}=s_{2,0}=0.$$
\end{lemma}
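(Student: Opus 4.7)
The first block of six vanishings $s_{0,0}=s_{0,1}=s_{1,0}=s_{d,d}=s_{d-1,d}=s_{d,d-1}=0$ is in fact automatic from the definition of $\mathscr{S}_d$, not a consequence of choosing a representative, so my plan is to open by recording this. Indeed, $S\in\mathscr{S}_d$ says exactly that $\Pi_0(S)={}^tZ_d\cdot S\cdot Z_d\equiv 0$; reading off the coefficients of $u^{2d}$, $u^{2d-1}v$, $uv^{2d-1}$ and $v^{2d}$ in this identity, and using the symmetry $s_{k,l}=s_{l,k}$, immediately produces the six equalities. Reading off the coefficient of $u^{2d-2}v^2$ additionally records the single linear relation
\[ 2\sqrt{\tbinom{d}{2}}\,s_{0,2}+d\,s_{1,1}=0, \]
which will be the only tool needed later to promote the vanishing of $s_{0,2}$ to that of $s_{1,1}$ and $s_{2,0}$.

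For the \emph{moreover} part ($d\geq 3$), my plan is to exploit the $SU(2)$-equivariant decomposition $\mathscr{S}_d\cong\mathcal{V}^{d-2}\oplus\mathcal{V}^{d-4}\oplus\cdots$ from Proposition~\ref{quadric contains rnc and decomposition}, combined with a weight analysis under the diagonal torus $g_\theta=\diag(e^{i\theta/2},e^{-i\theta/2})$ of $SU(2)$. A direct calculation with $\rho^{d/2}(g_\theta)$ gives $(g_\theta\cdot S)_{i,j}=e^{i\theta(i+j-d)}s_{i,j}$, so $s_{0,2}$ transforms with weight $2-d$. Among the summands $\mathcal{V}^{d-2k}$ ($k\geq 1$) constituting $\mathscr{S}_d$, the weight $2-d=-(d-2)$ appears only inside $\mathcal{V}^{d-2}$, and only on its one-dimensional lowest-weight line $\mathbb{C}\cdot v^{d-2}$. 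Hence $s_{0,2}|_{\mathscr{S}_d}$ factors through $\Pi_1:\mathscr{S}_d\to\mathcal{V}^{d-2}$ and equals a nonzero scalar multiple of the coefficient of $v^{d-2}$ in $\Pi_1(S)$, i.e.\ of the evaluation $\Pi_1(S)(0,1)$; nonvanishing of the scalar is immediate from the $m=2$ relation above.

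The conclusion is then a short root-finding argument on $\mathbb{C}P^1$. If $\Pi_1(S)=0$ then $s_{0,2}(S)=0$ already, the $m=2$ relation forces $s_{1,1}=0$, and $S$ itself is the desired representative. Otherwise $\Pi_1(S)$ is a nonzero homogeneous polynomial of degree $d-2\geq 1$ in $(u,v)$, hence has a projective root $[\alpha:\beta]\in\mathbb{C}P^1$. I would then pick $g\in SU(2)$ whose induced M\"obius action on $\mathbb{C}P^1$ sends $[0:1]$ to $[\alpha:\beta]$; by the $SU(2)$-equivariance of $\Pi_1$, the transformed polynomial $\Pi_1(g\cdot S)$ vanishes at $[0:1]$, killing its $v^{d-2}$-coefficient and hence $s_{0,2}(g\cdot S)$. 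Then $s_{2,0}=s_{0,2}=0$ by symmetry and $s_{1,1}=0$ by the $m=2$ relation, finishing the proof.

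The one substantive step is the identification of $s_{0,2}|_{\mathscr{S}_d}$ with a nonzero multiple of the $v^{d-2}$-coefficient of $\Pi_1(S)$: the weight count pins the functional down up to a scalar, and nonvanishing of that scalar is routine. The $U(1)$ factor plays no role—a global scaling cannot turn a nonzero entry into zero—so the entire argument is carried by $SU(2)$, and the hypothesis $d\geq 3$ enters precisely as the condition that guarantees a projective root of the nonzero degree-$(d-2)$ polynomial $\Pi_1(S)$.
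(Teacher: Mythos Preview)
Your proof is correct and follows essentially the same route as the paper: the first six vanishings are read off directly from the constraint $\Pi_0(S)=0$, and the ``moreover'' part is handled by observing that the relevant weight space in $\mathscr{S}_d$ is one-dimensional and lies in the $\mathcal{V}^{d-2}$ summand, then using the $SU(2)$-action to kill the extremal coefficient of $\Pi_1(S)$---your root-finding formulation on $\mathbb{C}P^1$ is exactly the paper's choice of $a,b$ making $\widetilde{\alpha}_0=\sum_l\alpha_l\bar{a}^{2d-4-l}\bar{b}^l=0$. One minor slip: in the paper's convention $\mathcal{V}^{d-2}$ consists of polynomials of degree $2d-4$ (not $d-2$), so the lowest-weight line is $\mathbb{C}\cdot v^{2d-4}$ and $\Pi_1(S)$ has degree $2d-4$; this is harmless since $2d-4\geq 1$ for $d\geq 3$ and the argument is unchanged.
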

\begin{proof} The first equality holds by a direct calculation from \eqref{requirement}. To verify the second,
consider the projection $\Pi_{1}$ of $\mathscr{S}_{d}$ into the first summand $\mathcal{V}^{d-2}$; see Proposition \ref{quadric contains rnc and decomposition}. Write
\begin{equation*}
\Pi_{1}(S_{0})=\sum_{j=0}^{2d-4}\alpha_{j}\;u^{2d-4-j}v^{j}.
\end{equation*}
Consider the induced representation of Lie algebra $\mathfrak{sl}(2,{\mathbb C})$ on $\mathscr{S}_{d}$ from that on $Sym_{d+1}(\mathbb{C})$, set $J_{3}:=\diag(-1,1)/2\in \mathfrak{sl}(2,{\mathbb C})$. The eigenvector space of $J_{3}$ in $\mathscr{S}_{d}$ corresponding to eigenvalue $d-2$ is of dimension $1$ because it is
\begin{equation*}
Span_{\mathbb{C}}\{\frac{1}{2}(e_{0}\otimes e_{2}+e_{2}\otimes e_{0}),~e_{1}\otimes e_{1}\}\cap~\mathscr{S}_{d}.
\end{equation*}
So we have $s_{1,1}=s_{0,2}=s_{2,0}=0$ if and only if $\alpha_{0}=0$. For $g=\begin{pmatrix}
                                                                     a & b \\
                                                                     -\bar{b} & \bar{a} \\
                                                                   \end{pmatrix}\in SU(2)
$, by simple calculation
\begin{equation*}
 \aligned
& \Pi_{1}(\varrho^{\frac{d}{2}}\otimes \varrho^{\frac{d}{2}}(g)S_{0})=\varrho^{d-2}(g)\circ\Pi_{1}(S_{0})\\
 &=\sum_{l=0}^{2d-4}\alpha_{l}\;(\bar{a}u-bv)^{2d-4-l}(\bar{b}u+av)^{l}
=:\sum_{l=0}^{2d-4}\widetilde{\alpha}_{l}\;u^{2d-4-l}v^{l},
\endaligned
\end{equation*}
where $\widetilde{\alpha}_{0}=\sum_{l=0}^{2d-4}\alpha_{l}\bar{a}^{2d-4-l}\bar{b}^{l}$. It is easy to be verified that $a,b\in \mathbb{C},~|a|^{2}+|b|^{2}=1$ can be chosen such that $\widetilde{\alpha}_{0}=0$.
\end{proof}

To conclude this section, we present a result about the existence of complex symmetric matrix $S\in\mathbf{S}_{d,n}$ taking $0$ as its minimal singular value with a given multiplicity. We point out that this is useful in the construction of constantly curved holomorphic 2-spheres in a singular hyperquadric, which will not be discussed in this paper.
\begin{lemma}\label{matrix-lemma1}
For any given $m\leq d$, there exists solutions to the equation
$$\sum_{j=0}^{m}s_{j,\,m-j}\;\sqrt{\tbinom{d}{j}\,\tbinom{d}{m-j}}=0,$$
such that $s_{j,\, m-j}=s_{m-j,\,j}\neq0$ for all $0\leq j\leq m$.

\end{lemma}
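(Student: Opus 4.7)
My plan is to view the relation as a single $\mathbb{C}$-linear equation in the symmetrized unknowns and then observe that the nonvanishing condition is generic.

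First I would impose the symmetry $s_{j, m-j} = s_{m-j, j}$, so that the independent unknowns are $s_{j, m-j}$ for $0 \leq j \leq \lfloor m/2 \rfloor$, giving $\lfloor m/2 \rfloor + 1$ complex degrees of freedom. Grouping the terms indexed by $j$ and by $m - j$, the relation becomes the single equation
\begin{equation*}
\sum_{j=0}^{\lfloor m/2 \rfloor} w_j \, s_{j, m-j} \, \sqrt{\tbinom{d}{j}\tbinom{d}{m-j}} = 0,
\end{equation*}
where $w_j = 2$ for $j < m/2$ and $w_{m/2} = 1$ when $m$ is even. All coefficients are strictly positive reals, so this is a nontrivial $\mathbb{C}$-linear constraint.

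Next, for $m \geq 2$ there are at least two complex unknowns, so the solution locus is a $\mathbb{C}$-linear subspace of positive dimension. Each coordinate condition $\{s_{j, m-j} = 0\}$ intersects this subspace in a proper linear subspace, and a complex vector space of positive dimension is never covered by finitely many proper linear subspaces, so solutions with every $s_{j, m-j}$ nonzero exist. If one prefers an explicit witness, one may set $s_{0, m} = 1$ and, writing $c_j := \sqrt{\tbinom{d}{j}\tbinom{d}{m-j}}$ and $N := \lfloor m/2 \rfloor$, take $s_{j, m-j} = -w_0 c_0 / (N w_j c_j)$ for $1 \leq j \leq N$; a direct check gives the desired vanishing, and every entry is manifestly nonzero.

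There is no genuine obstacle to the argument; the only detail worth flagging is that the statement is implicitly restricted to $m \geq 2$, since for $m \in \{0, 1\}$ the symmetry leaves only a single independent unknown, which the equation forces to vanish, contradicting the nonzero requirement.
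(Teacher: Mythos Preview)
Your argument is correct and takes a genuinely different route from the paper's. The paper proves this lemma by invoking its earlier Lemmas~\ref{elme-lemma1} and~\ref{elem-lemma2} together with Remarks~\ref{lemma1-remark} and~\ref{lemma2-remark}: it uses the monotonicity $\epsilon_0<\epsilon_1<\cdots<\epsilon_{[m/2]}$ of the coefficients $\epsilon_j=\sqrt{\binom{d}{j}\binom{d}{m-j}}$ and an alternating-sign construction to produce \emph{real} solutions with controlled absolute values. That machinery was built for Lemma~\ref{matrix-lemma}, where singular-value constraints genuinely demand such control, and is simply reused here.

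Your approach is more elementary and more general: a single nontrivial linear equation in at least two unknowns has a solution space not covered by finitely many coordinate hyperplanes, and you give an explicit witness. This uses nothing about the coefficients beyond their being nonzero; in particular it avoids the parity split and the inequality $\epsilon_k<2\epsilon_{k-1}$ that the paper's even-$m$ case inherits from Lemma~\ref{elem-lemma2}. What you give up is any bound on the sizes of the $s_{j,m-j}$, but the present lemma does not ask for one.

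Your remark that the statement tacitly requires $m\geq 2$ is correct, and the paper's argument shares the same restriction: the underlying Lemmas~\ref{elme-lemma1} and~\ref{elem-lemma2} both assume $k\geq 1$, i.e.\ $[m/2]\geq 1$.
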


\begin{proof}
Similar to the proof of Lemma~\ref{matrix-lemma}, this follows from Lemma~\ref{elme-lemma1}, Remark~\ref{lemma1-remark}, Lemma~\ref{elem-lemma2} and Remark~\ref{lemma2-remark}.
\end{proof}

\begin{theorem}\label{thm-presin0}
For any given $q\geq 0$, there exists a constantly curved holomorphic $2$-sphere of degree $d$, by which the $d+1$ plane spanned takes $0$ as its minimal singular value with multiplicity $q$.
\end{theorem}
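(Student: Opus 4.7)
My plan: By Theorem~\ref{simple classification} together with the SVD-based reconstruction in Remark~\ref{construction}, the statement reduces to exhibiting a $(d+1)\times(d+1)$ complex symmetric matrix $S\in\mathscr{S}_d$ (that is, a solution of \eqref{requirement}) whose singular values lie in $[0,1]$ and whose corank is exactly $q$. Indeed, writing the symmetric-matrix SVD $S={}^tU\,\diag(0_{r_0},\sigma_1 I_{r_1},\ldots,\sigma_m I_{r_m})\,U$ and then $E=V_{\vec\sigma}U$ as in \eqref{jm alpha0}, one obtains an isometry with $E^{*}E=Id_{d+1}$ and ${}^tEE=S$, so $\gamma=EZ_d$ is the required holomorphic $2$-sphere in $\mathcal{Q}_{n-1}$ for any $n$ large enough (for instance $n\geq 2d+1$), and the $(d+1)$-plane $\im E$ has exactly $q$ vanishing singular values by construction.

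To build such an $S$, I would take the block form
\[
S=\begin{pmatrix}S' & 0\\ 0 & 0_{q\times q}\end{pmatrix}
\]
and further specialize $S'$ to be anti-diagonal, $S'=\antidiag(s_0,s_1,\ldots,s_{d-q})$ with the symmetry constraint $s_i=s_{d-q-i}$. With this choice, every defining relation in \eqref{requirement} is automatically trivial except at the $(d-q)$-th anti-diagonal, which collapses to the single linear equation
\[
\sum_{i=0}^{d-q}s_i\,\sqrt{\tbinom{d}{i}\tbinom{d}{d-q-i}}\;=\;0.
\]
This is precisely the equation treated by Lemma~\ref{matrix-lemma1} with $m=d-q$: provided $d-q\geq 2$, it yields a symmetric solution with every $s_i\neq 0$. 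Rescaling by a small positive constant so that $|s_i|\leq 1$ for all $i$ forces the singular values of $S'$ (which are $|s_0|,\ldots,|s_{d-q}|$, since $S'$ is anti-diagonal) into $(0,1]$, and thus $S$ has singular-value multiset $\{|s_0|,\ldots,|s_{d-q}|,\,0,\ldots,0\}$ with exactly $q$ zeros.

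The principal subtlety, and the only delicate point, is pinning down the admissible range of $q$. A rank-$1$ quadric is a hyperplane and a rank-$2$ quadric is a union of two hyperplanes; neither can contain the nondegenerate rational normal curve $Z_d$ in $\mathbb{C}P^d$. Hence every nonzero $S\in\mathscr{S}_d$ has rank at least $3$, so the corank bound $q\leq d-2$ is a genuine constraint, and the construction above realizes every such $q$. (The borderline case $q=d+1$, corresponding to $S=0$ and $EZ_d$ sitting inside a maximal isotropic $(d+1)$-plane, is handled trivially once $n\geq 2d+1$, while $q\in\{d-1,d\}$ are ruled out entirely.) This pins down the natural reading of the theorem and, combined with the anti-diagonal construction, completes the plan.
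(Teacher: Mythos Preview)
Your proposal is correct and matches the paper's own proof almost verbatim: both take $S=\diag\bigl\{\antidiag(s_{0},\ldots,s_{d-q}),\,0_{q\times q}\bigr\}$ with the anti-diagonal entries furnished by Lemma~\ref{matrix-lemma1} at $m=d-q$, and then reconstruct the $2$-sphere via Remark~\ref{construction}. Your additional care about the admissible range of $q$ (observing that a nonzero $S\in\mathscr{S}_d$ must have rank at least $3$, so $q\leq d-2$ unless $S=0$) and the harmless rescaling to force singular values into $[0,1]$ are valid refinements that the paper's terse proof leaves implicit.
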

\begin{proof}
The proof is similar to that of Theorem~\ref{thm-Hd}, except that in the proof we use Lemma~\ref{matrix-lemma1} instead of Lemma~\ref{matrix-lemma}. In fact, now the required symmetric matrix can be defined by
$$S=\diag\Big\{\antidiag\{s_{0,\, m},~s_{1,\, m-1},\;\, \cdots,\;\,s_{m-1,\,1},\;\,s_{m,\,0}\},\,\,\underbrace{0,\cdots,0}_{q}\Big\},$$
where $m=d-q$ and $\{s_{0,\, m},~s_{1,\, m-1}, \;\,\cdots,s_{m-1,\,1},\;\,s_{m,\,0}\}$ is given in Lemma~\ref{matrix-lemma1}.
\end{proof}

Consequently, combining Theorem~\ref{thm-Hd} with Theorem~\ref{thm-presin0}, we may prescribe the multiplicities of both singular values $0$ and $1$.

\section{Classification of constantly curved holomorphic 2-spheres of degree $\leq 3$ in $\mathcal{Q}_{n-1}$}\label{sec-classify}
Constantly curved holomorphic 2-spheres of degree $d\leq 3$ in $\mathcal{Q}_{n-1}$ are discussed in this section. Since we need only consider the linearly full case, we deal only with $\mathcal{Q}_{n-1}$ with $d\leq n\leq 2d+1$. Classification of the moduli space
$$\mathbf{H}_{d,n}/O(n+1;\mathbb{R}),~~~1\leq d\leq 3,~~~d\leq n\leq 2d+1,$$
is obtained.

\subsection{Case of $d=1$.}
It is clear that there are no isotropic lines in $\mathbb{C}P^{n}$ when $n\leq 2$, so that $\mathbf{H}_{1,1}$ and $\mathbf{H}_{1,2}$ are empty. In $\mathbb{C}P^3$, the isotropic line can be determined, up to a real orthogonal transformation and reparametrization of $\mathbb{C}P^{1}$, by
\begin{equation*}
 \gamma:\mathbb{C}P^{1}\rightarrow \mathcal{Q}_{2},\quad\quad\quad\quad
 [u,v]\mapsto ~^{t}[~\frac{1}{\sqrt{2}}(u,v,\sqrt{-1}u,\sqrt{-1}v)]. 
\end{equation*}
Hence ${\mathbf H}_{1,3}/O(4;\mathbb{R})$ is a singleton set.

\subsection{Case of $d=2$.}
By a direct computation, it is easy to verify that
\begin{small}
$$\mathbf{S}_{2,5}/U(1)\times SU(2)\!\!=\!\!\Big\{\![\antidiag(\sigma,-\sigma,\sigma)]\Big|\sigma\in [0,1]\!\Big\},~~\mathbf{S}_{2,4}/U(1)\times SU(2)\!\!=\!\!\Big\{\![\antidiag(1,-1,1)]\!\Big\}.$$
\end{small}
Combining this with Theorem~\ref{simple classification}, we have the following.
\begin{theorem}
The  moduli space
$$\mathbf{H}_{2,2}/O(3;\mathbb{R})=\mathbf{H}_{2,3}/O(4;\mathbb{R})=\mathbf{H}_{2,4}/O(5;\mathbb{R})$$
is a singleton set, and
$$\mathbf{H}_{2,5}/O(6;\mathbb{R})\cong [0,1].$$
\end{theorem}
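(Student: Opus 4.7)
The plan is to apply the main classification Theorem~\ref{simple classification}, which translates each moduli space $\mathbf{H}_{2,n}/O(n+1;\mathbb{R})$ into the symmetric-matrix orbit space $\mathbf{S}_{2,n}/U(1)\times SU(2)$. The work then reduces to three tasks: describing $\mathscr{S}_{2,0}$ explicitly, analyzing the $U(1)\times SU(2)$ action on it, and imposing the correct singular-value constraints cutting out $\mathbf{S}_{2,n}$ for each $n$.

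First I would pin down $\mathscr{S}_{2,0}$ concretely. Proposition~\ref{quadric contains rnc and decomposition} with $d=2,p=0$ gives $\mathscr{S}_{2,0}\cong\mathcal{V}^{0}$, which is one-complex-dimensional and on which $SU(2)$ acts trivially (the trivial representation). To identify the one parameter explicitly, I would write a symmetric $3\times 3$ matrix $S=(s_{ij})$, expand $^{t}Z_{2}\cdot S\cdot Z_{2}$ using $Z_{2}={}^{t}[1,\sqrt{2}z,z^{2}]$, and set the resulting quartic in $z$ to zero. The equations $s_{00}=s_{01}=s_{12}=s_{22}=0$ and $s_{11}+s_{02}=0$ drop out, forcing $S=s_{02}\cdot\antidiag(1,-1,1)$. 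Since $SU(2)$ acts trivially and the residual $U(1)$ acts by $S\mapsto\lambda S$, the orbit space $\mathscr{S}_{2,0}/U(1)\times SU(2)$ is identified with $[0,\infty)$, via representatives $\sigma\cdot\antidiag(1,-1,1)$ with $\sigma\geq 0$.

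Next I would compute the singular values of $\sigma\cdot\antidiag(1,-1,1)$. Since the matrix is real symmetric, a direct eigenvalue computation factoring $(s+\lambda)^{2}(s-\lambda)$ yields characteristic roots $\sigma,-\sigma,-\sigma$, so all three singular values equal $\sigma$. The definition of $\mathbf{S}_{2,n}\subset\mathscr{S}_{2,0}$ from Proposition~\ref{classification of cdn} then takes over. For $n=5$ the multiplicity condition is vacuous and the only constraint is $\sigma\in[0,1]$ (from the bound $\sigma_{j}\in[0,1]$ in Corollary~\ref{classification of plane}), giving $\mathbf{S}_{2,5}/U(1)\times SU(2)\cong[0,1]$. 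For $n=4,3,2$ the condition that $1$ be the maximal singular value with multiplicity at least $2d+1-n\geq 1$ forces $\sigma=1$, so each of the three orbit spaces collapses to the single point $[\antidiag(1,-1,1)]$.

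Combining these with Theorem~\ref{simple classification} yields $\mathbf{H}_{2,5}/O(6;\mathbb{R})\cong[0,1]$ and singleton moduli for $n=2,3,4$; the natural inclusions of Remark~\ref{rk-geo-exp} identify these three singletons with the endpoint $\sigma=1$ of $[0,1]$, yielding the chain of equalities in the theorem. None of the steps presents a real obstacle; the only mildly computational point is the direct verification of the quadric-containment equations pinning down the form of matrices in $\mathscr{S}_{2,0}$, and even that is a five-line calculation, consistent with the paper's remark that the orbit spaces can be verified by direct computation.
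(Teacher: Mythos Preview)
Your proposal is correct and follows essentially the same approach as the paper: the paper simply asserts that ``by a direct computation'' one has $\mathbf{S}_{2,5}/U(1)\times SU(2)=\{[\antidiag(\sigma,-\sigma,\sigma)]\mid\sigma\in[0,1]\}$ and $\mathbf{S}_{2,4}/U(1)\times SU(2)=\{[\antidiag(1,-1,1)]\}$, and then invokes Theorem~\ref{simple classification}. You have supplied exactly those details---the explicit determination of $\mathscr{S}_{2,0}$, the triviality of the $SU(2)$-action via $\mathscr{S}_{2,0}\cong\mathcal{V}^{0}$, the singular-value computation, and the multiplicity constraints---in the way the paper intends.
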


The constantly curved holomorphic 2-sphere of degree $2$ in $\mathcal{Q}_{1}\subset\mathcal{Q}_2\subset\mathcal{Q}_3$ can be parameterized as
$$\gamma: [u,v]\mapsto ~^{t}[\sqrt{-1}(u^{2}+v^{2}),\,u^{2}-v^{2},\,2uv,\,0,\,0].$$
The $1$-family of constantly curved holomorphic 2-spheres of degree $2$ in $\mathcal{Q}_{4}$ can be parametrized as
\begin{small}
$$\gamma_\sigma: [u,v]\mapsto ~^{t}[\sqrt{-1}\sigma(u^{2}+v^{2}),\,\sigma(u^{2}-v^{2}),\,2\sigma uv,\,\sqrt{1-\sigma^2} (u^{2}+v^{2}),\,\sqrt{\sigma^2-1}(u^{2}-v^{2}),\,2\sqrt{\sigma^2-1}uv].$$
\end{small}

To conclude the discussion of this case, we present a figure (Fig. 6.1) to show the distribution of constantly curved holomorphic 2-spheres of degree $2$ in $\mathcal{Q}_4$.
\begin{figure}[H]\label{Figure for deg 2.0}
\centering
\begin{tikzpicture}[scale=0.60]
\draw[->] (0,0) -- (xyz cs:x=5) node[anchor=north west] {$\sigma_{1}$ };;
\draw[->] (0,0) -- (xyz cs:y=5) node[anchor=south west] {$\sigma_{2}$ };;
\draw[->] (0,0) -- (xyz cs:z=5) node[anchor=north west] {$\sigma_{0}$ };
\draw[thin] (0,0)--(xyz cs:z=0,x=0,y=3)--(xyz cs:z=0,x=3,y=3);
\draw[thin] (xyz cs:z=0,x=3,y=3)--(0,0);
\draw[thick] (xyz cs:z=0,x=1.2,y=2)--(xyz cs:z=0,x=0,y=3);
\fill[fill=yellow!80!white] (xyz cs:z=0,x=0,y=3)--(xyz cs:z=0,x=1.2,y=2)--(0,0)--(xyz cs:z=0,x=0,y=3);
\fill[fill=yellow!80!white] (xyz cs:z=0,x=3,y=3)--(xyz cs:z=0,x=1.2,y=2)--(0,0)--(xyz cs:z=0,x=3,y=3);
\fill[fill=gray!40!white] (xyz cs:z=0,x=0,y=3)--(xyz cs:z=0,x=3,y=3)--(xyz cs:z=0,x=1.2,y=2)--(xyz cs:z=0,x=0,y=3);
\draw[thick,green] (xyz cs:z=0,x=3,y=3)--(xyz cs:z=0,x=1.2,y=2);
\draw[ultra thick, red] (xyz cs:z=0,x=1.2,y=2)--(0,0);
\draw (0,0) node[anchor=north] {(0,0,0)};
\draw (xyz cs:z=0,x=0,y=3) node[anchor=east] {(0,0,1)};
\draw (xyz cs:z=0,x=0,y=3) node[anchor=east] {(0,0,1)};
\draw (xyz cs:z=0,x=3,y=3) node[anchor=west] {(0,1,1)};
\draw (xyz cs:z=0,x=1.2,y=2) node[anchor=north west] {(1,1,1)};
\draw[thick, gray] (xyz cs:z=0,x=1.2,y=2)--(xyz cs:z=0,x=0,y=3);
\draw[thick, gray] (xyz cs:z=0,x=0,y=3)--(xyz cs:z=0,x=3,y=3);

\fill[fill=red] (0,0) circle (0.08cm);
\fill[fill=blue] (xyz cs:z=0,x=1.2,y=2) circle (0.12cm);
\fill[fill=green] (xyz cs:z=0,x=3,y=3) circle (0.08cm);
\fill[fill=gray!40!white] (xyz cs:z=0,x=0,y=3) circle (0.08cm);
\end{tikzpicture}
\caption{$\Theta_{2,2}=\mathbf{H}_{2,2}/O(3;\mathbb{R}), \,\Theta_{2,3}, \,\Theta_{2,4}, \,\Theta_{2,5}, \mathbf{H}_{2,5}/O(6;\mathbb{R})$}
\end{figure}
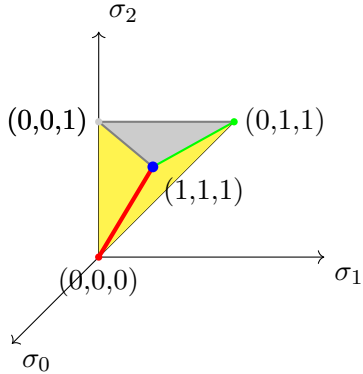
The tetrahedron in this figure 
represents $\Theta_{2,5}$, i.e., the orbit space of $G(3,6,\mathbb{C})/O(6;\mathbb{R})$ (see Theorem~\ref{moduli of plane}). 
$\mathbf{H}_{2,5}/O(6;\mathbb{R})$ is represented by the red line segment; it comprises, up to equivalence, of all planes whose intersection with $\mathcal{Q}_4$ contain constantly curved holomorphic 2-spheres of degree $2$. 
The blue endpoint of this segment represents $\Theta_{2,2}\cong G(3,3,\mathbb{C})/O(3;\mathbb{R})$; the constantly curved holomorphic 2-spheres lying in this plane are not linearly full in $\mathcal{Q}_{4}$. $\Theta_{2,3}\cong G(3,4,\mathbb{C})/O(4;\mathbb{R})$ is represented by the green line segment; it lies on the boundary of the grey triangle representing $\Theta_{2,4}\cong G(3,5,\mathbb{C})/O(5;\mathbb{R})$.

\subsection{Case of $d=3$.}\label{deg=3}
From Lemma \ref{kill parameter lemma}, it is easy to verify that
\begin{equation}\label{eq-Sform}
\mathscr{S}_{3}/U(1)\times SU(2)=\Big\{[\begin{pmatrix}
  0 & 0 & 0 & \frac{3}{2}y \\
  0 & 0& -\frac{1}{2}y & -\frac{\sqrt{3}}{2}z \\
  0& -\frac{1}{2}y & z & 0 \\
  \frac{3}{2}y & -\frac{\sqrt{3}}{2}z & 0 & 0 \\
\end{pmatrix}]\,\Big|\,y,\,z\in[0,+\infty)\Big\},
\end{equation}
where we have used the action of $U(1)$ and $SU(2)$ to modify the values of $y$ and $z$ such that they are nonnegative real numbers.
\begin{theorem}
$\mathbf{H}_{3,3}=\mathbf{H}_{3,4}=\varnothing$, $\mathbf{H}_{3,5}/O(6;\mathbb{R})$ is a singleton set, and

 {\bf (1)} $\mathbf{H}_{3,6}/O(7;\mathbb{R})$ is bijective to the closure of a $1/4$-circle, and,

{\bf (2)} $\mathbf{H}_{3,7}/O(8;\mathbb{R})$ is bijective to the closure of a $1/4$-disk.
\end{theorem}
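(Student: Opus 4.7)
The strategy is to invoke Theorem~\ref{simple classification} to identify each $\mathbf{H}_{3,n}/O(n+1;\mathbb{R})$ with $\mathbf{S}_{3,n}/U(1)\times SU(2)$, and then to exploit the explicit parameterization~\eqref{eq-Sform} of $\mathscr{S}_3/U(1)\times SU(2)$ by pairs $(y,z)\in\mathbb{R}_{\geq 0}^2$. The classification thereby reduces to determining, for each $n\in\{3,\dots,7\}$, the locus of $(y,z)$ at which the real symmetric matrix $S=S(y,z)$ has singular value $1$ with multiplicity at least $7-n$ (degenerating to $\sigma_{\max}(S)\leq 1$ when $n=7$). Since $S(y,z)$ is real symmetric, its four singular values are precisely the absolute values of its four real eigenvalues, so the entire analysis is driven by the characteristic polynomial; expanding $\det(\lambda I-S)$ along the sparse first row yields
\begin{equation*}
p(\lambda)=\lambda^4-z\lambda^3-\tfrac{10y^2+3z^2}{4}\lambda^2+\tfrac{3z(3y^2+z^2)}{4}\lambda+\tfrac{9y^4}{16}.
\end{equation*}

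For the vanishing and singleton claims, I would prescribe $m$ of the eigenvalues to lie in $\{\pm 1\}$ with free signs $\epsilon_i$, and match the resulting elementary symmetric functions $e_k$ against the coefficients of $p$. For $m=4$ or $m=3$, the constraint $e_4=9y^4/16\geq 0$ forces an even number of $-1$'s in the prescribed signs, whereupon either $e_1=z$ is driven negative, or $e_2=-(10y^2+3z^2)/4\leq 0$ contradicts the value computed from the signs, or the ``free'' eigenvalue falls outside $[-1,1]$; each configuration is ruled out, so $\mathbf{H}_{3,3}=\mathbf{H}_{3,4}=\varnothing$. For $m=2$, three sub-cases arise according to the sign pattern. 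The $(-1,-1)$ pattern forces $e_1=-2+s\geq 0$ with $s\leq 2$, hence $z=0$, which is then killed by $e_4$. The $(+1,+1)$ pattern yields a pair of cubic-type identities in $(y,z)$ whose resultant, after eliminating $y^2$, factors (up to a constant) as $(3z-4)^3(z+4)(3z^2-12z+16)$; the only real roots $z=4/3$ and $z=-4$ both produce $y^2<0$. The $(+1,-1)$ pattern collapses via $e_3$ and $e_4$ to $z=0$ together with $9y^4-40y^2+16=0$, whose only root compatible with $\max|\lambda_i|\leq 1$ is $y=2/3$. Direct substitution confirms that $S(2/3,0)$ has eigenvalues $\{\pm 1,\pm 1/3\}$, so $\mathbf{H}_{3,5}/O(6;\mathbb{R})$ is the singleton $\{[S(2/3,0)]\}$.

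For parts (1) and (2), set $R:=\{(y,z)\in\mathbb{R}_{\geq 0}^2:\sigma_{\max}(S(y,z))\leq 1\}$; by the same identification, $R$ parameterizes $\mathbf{H}_{3,7}/O(8;\mathbb{R})$. Along the axes $S$ becomes diagonal after an orthogonal permutation, with eigenvalues $\{\pm 3y/2,\pm y/2\}$ on $z=0$ and $\{0,z,\pm\sqrt{3}z/2\}$ on $y=0$, so $R$ meets the axes in the segments $\{0\leq y\leq 2/3\}$ and $\{0\leq z\leq 1\}$. Because $S(ty,tz)=tS(y,z)$, the eigenvalues scale linearly in $t\geq 0$, hence $R$ is star-shaped about the origin, and its remaining boundary is an ``outer arc'' defined by $p(1)=0$ joining $(2/3,0)$ to $(0,1)$. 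Treating $p(1)=0$ as a quadratic in $u=y^2$, the discriminant equals $-16(3z-4)^3$, which is nonnegative precisely for $z\leq 4/3$, and the smaller root provides the explicit branch
\begin{equation*}
y^2=\frac{2(10-9z)-2(4-3z)^{3/2}}{9},\qquad z\in[0,1],
\end{equation*}
strictly decreasing in $z$ with the correct endpoints. This presents $R$ as a curvilinear triangle — bijective to the closure of a quarter-disk — proving (2). The outer arc together with its endpoints is precisely the locus $\sigma_{\max}=1$, so $\mathbf{H}_{3,6}/O(7;\mathbb{R})$ is bijective to the closure of a quarter-circle, proving (1).

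The main obstacle I anticipate is the resultant computation in the $(+1,+1)$ sub-case: although the final factorization is clean, arriving at it requires combining two cubic-type polynomial relations and tracking signs carefully, and then verifying that every real root of the resulting sextic in $z$ produces a non-admissible (in particular, negative) value of $y^2$.
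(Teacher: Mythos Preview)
Your route is genuinely different from the paper's. The paper writes down closed-form eigenvalue formulas (nested radicals in $y,z$), verifies the ordering $\lambda_4\geq -\lambda_1\geq\lambda_2\geq -\lambda_3\geq 0$ directly, and then reads off from~\eqref{injective} that $z=\sigma_4-\sigma_3+\sigma_2-\sigma_1$ and $9y^4/16=\sigma_1\sigma_2\sigma_3\sigma_4$; the case analysis in~\eqref{singular value distribution for deg 3} then dispatches each $n$ quickly. You instead work with the elementary symmetric functions, do sign-pattern casework for the empty/singleton claims, and use homogeneity to exhibit $R$ as star-shaped with boundary on $p(1)=0$. Your approach avoids the radical expressions and gives an explicit algebraic equation for the boundary arc; the paper's approach makes the eigenvalue ordering and the injectivity of $(y,z)\mapsto\vec{\sigma}$ transparent.

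There is, however, a real gap. Equation~\eqref{eq-Sform} only asserts that every class in $\mathscr{S}_3/U(1)\times SU(2)$ has \emph{some} representative $S(y,z)$ with $y,z\geq 0$; it does not claim uniqueness. The paper proves injectivity of $(y,z)\mapsto[S(y,z)]$ inside the theorem's proof, precisely via~\eqref{injective}. Without it, your identifications in (1) and (2) are only surjections from $R$ and its outer arc onto the moduli spaces, not bijections. You can close this within your own framework: since the $U(1)\times SU(2)$ action preserves singular values, it suffices to recover $(y,z)$ from $\{\sigma_i\}$, and indeed $\prod_i\sigma_i^2=e_4^2$ gives $y$, after which $\sum_i\sigma_i^2=e_1^2-2e_2=\tfrac{5}{2}z^2+5y^2$ gives $z$. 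But as written this step is missing.

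Two smaller points. For $m=3$, the sentence ``$e_4\geq 0$ forces an even number of $-1$'s in the prescribed signs'' is not correct---the free eigenvalue absorbs the parity---though each of the four sign patterns is still eliminated by $e_1\geq 0$, $e_2\leq 0$, $e_3\geq 0$. And the claim that the outer boundary lies on $\{p(1)=0\}$ rather than $\{p(-1)=0\}$ deserves one line: the intersection $\{p(1)=0\}\cap\{p(-1)=0\}$ forces $e_1+e_3=0$, hence $z=0$, so the connected boundary curve cannot switch branches in the open quadrant.
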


\begin{proof}
Take a real symmetric matrix $S\in\mathscr{S}_{3}$ having the form given in \eqref{eq-Sform} and let $\lambda_1, \lambda_2,\lambda_3, \lambda_4$ be the eigenvalues of $S$. Their absolute values are just the singular values of $S$.
We find the characteristic polynomial of $S$ to be
\begin{equation}\label{charac polynomial}
\lambda^{4}+(-z)\lambda^{3}+(-\frac{3}{4}z^{2}-\frac{5}{2}y^{2})\lambda^{2}+(\frac{3}{4}z^{3}+\frac{9}{4}y^{2}z)\lambda+\frac{9}{16}y^{4}=0.
\end{equation}
The eigenvalues of $S$ are
\begin{equation}\label{eigenvalues}
 \aligned
 &\lambda_{1}=A-\frac{1}{2}[B+\frac{1}{2}C]^{\frac{1}{2}}-D,\quad\quad\quad&
 \lambda_{2}&=A+\frac{1}{2}[B+\frac{1}{2}C]^{\frac{1}{2}}-D,\\
& \lambda_{3}=A-\frac{1}{2}[B-\frac{1}{2}C]^{\frac{1}{2}}+D,\quad\quad\quad\quad&
 \lambda_{4}&=A+\frac{1}{2}[B-\frac{1}{2}C]^{\frac{1}{2}}+D,\\
\endaligned
\end{equation}
where $A=\frac{1}{4}z,~B=4y^{2}+2z^{2},~C=4z(y^{2}+\frac{z^{2}}{4})^{\frac{1}{2}},~D=\frac{1}{2}(y^{2}+\frac{z^{2}}{4})^{\frac{1}{2}}$. It is routine to check that $\lambda_{4}\geq -\lambda_{1}\geq \lambda_{2}\geq -\lambda_{3}\geq0$.
So the singular values of $S$ (in nondecreasing order) are given by $$\sigma_{1}=-\lambda_{3},~~~\sigma_{2}=\lambda_{2},~~~\sigma_{3}=-\lambda_{1},~~~\sigma_{4}=\lambda_{4}.$$
It follows that
\begin{equation}\label{injective}
z=\sigma_{4}-\sigma_{3}+\sigma_{2}-\sigma_{1},\quad
 9y^{4}/16=\sigma_{1}\sigma_{2}\sigma_{3}\sigma_{4},
\end{equation}
which implies the map
\begin{align}\label{singular values for deg 3}
 \phi:[0,\infty)\times[0,\infty)\rightarrow \mathbb{R}^{4},\quad\quad
(z,y)\mapsto (\sigma_{1},\sigma_{2},\sigma_{3},\sigma_{4})
\end{align}
is injective. This means $S(y,z)$ can be determined uniquely by its singular values, so that we have $\mathscr{S}_{3}/U(1)\times SU(2)\cong [0,\infty)\times [0,\infty)$.

Next, we consider the subsets $\mathbf{S}_{3,n}/U(1)\times SU(2)$, for which the following relations between singular values and $y,z$ are needed. They can be obtained from \eqref{eigenvalues} as follows.
\begin{align}\label{singular value distribution for deg 3}
\begin{split}
 (1)&~\text{If $yz\neq 0$, then $\sigma_{4}>\sigma_{3}>\sigma_{2}>\sigma_{1}>0$.}\\
 (2)&~\text{If $y\neq 0,~z=0$, then $\sigma_{4}=\sigma_{3}>\sigma_{2}=\sigma_{1}>0$.}\\
(3)&~\text{If $y=0,~z\neq 0$, then $\sigma_{4}>\sigma_{3}=\sigma_{2}>\sigma_{1}=0$.}\\
(4)&~\text{If $y=z=0$, then $\sigma_{1}=\cdots=\sigma_{4}=0$.}\\
\end{split}
\end{align}
A consequence is that $\sigma_4=\sigma_3=\sigma_2\neq0$ cannot happen, which implies $\mathbf{S}_{3,3}=\mathbf{S}_{3,4}=\varnothing$, and hence $\mathbf{H}_{3,3}=\mathbf{H}_{3,4}=\varnothing$.

For a matrix in $\mathbf{S}_{3,5}$, there must hold $\sigma_4=\sigma_3=1$. Then it follows from \eqref{singular value distribution for deg 3} that $z=0$ and $9y^4=16\sigma_1^2$. Substituting these and $\lambda=\sigma_1$ into \eqref{charac polynomial}, we solve to see that $\sigma_1=\sigma_2=\frac{1}{3}$ and $y=2/3$. This implies $\mathbf{H}_{3,5}/O(6;\mathbb{R})\cong\mathbf{S}_{3,5}/U(1)\times SU(2)$ is a singleton set.

To determine
\begin{small}
\begin{equation*}
\mathbf{S}_{3,6}/U(1)\times SU(2)\cong \{(y,z)\in\mathbb{R}^{2}_{\geq 0}|\sigma_{4}(y,z)=1\}\subset \mathbf{S}_{3,7}/U(1)\times SU(2)\cong\{(y,z)\in\mathbb{R}^{2}_{\geq 0}|\sigma_{4}(y,z)\leq 1\},
\end{equation*}
\end{small}
we need only note from \eqref{eigenvalues} that $\sigma_4(y,z)=1$ is a component $\eta$ of the following real algebraic curves of degree $4$
$$9y^4+12z^3+36y^2z-40y^2-12z^2-16z+16=0.$$
The part of the curve $\eta$ lying in the first quadrant is plotted below (Fig. 6.2), labeled by the red color; it represents the moduli space $\mathbf{H}_{3,6}/O(7;\mathbb{R})$.
In the figure, the blue area bounded by the curve $\eta$ and the coordinate axes gives the moduli space $\mathbf{H}_{3,7}/O(8;\mathbb{R})$.
\end{proof}
\begin{figure}[h]\label{Figure for deg 3}
\centering
\begin{tikzpicture}[scale=0.60]
        \begin{polaraxis}
 \fill[fill=blue!70!white,opacity=0.5] (canvas polar cs:angle=0, radius=3.12cm)--(canvas polar cs:angle=90, radius=2.10cm)--(canvas polar cs:angle=0, radius=0cm)--cycle;
 \addplot[fill=blue!70!white,opacity=0.5,domain=0:90,samples=360,smooth] (x,{sqrt(1+tan(x)^2)/(1/4+1/2*sqrt(4*tan(x)^2+2-2*sqrt(tan(x)^2+1/4))+1/2*sqrt(tan(x)^2+1/4))});
\addplot[red,domain=0:90,samples=360,smooth,ultra thick] (x,{sqrt(1+tan(x)^2)/(1/4+1/2*sqrt(4*tan(x)^2+2-2*sqrt(tan(x)^2+1/4))+1/2*sqrt(tan(x)^2+1/4))});
 \fill[fill=red] (canvas polar cs:angle=0, radius=3.1cm) circle (0.08cm) node[anchor=north east] {(1,0)};
\fill[fill=green] (canvas polar cs:angle=90, radius=2.09cm) circle (0.08cm) node[anchor=south west] {(0,\,$2/3$)};
\end{polaraxis}
 \end{tikzpicture}
\caption{$z-y$ plane}
\end{figure}
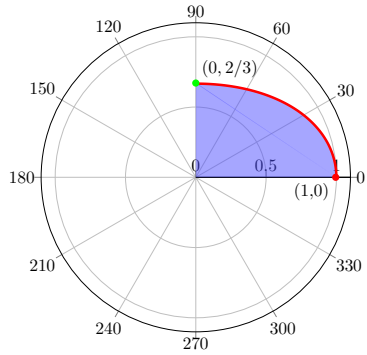


To conclude, we give an explicit example of constantly curved holomorphic 2-sphere of degree $3$ in $\mathcal{Q}_{6}$ to illustrate the constructive procedure given in Remark~\ref{construction}. 

\begin{example}
For convenience, we denote the matrix in \eqref{eq-Sform} by $S(\frac{1}{2},\frac{1}{3})$ with parameters $z=\frac{1}{2},~y=\frac{1}{3}$. Now
\begin{equation*}
S(\frac{1}{2},\frac{1}{3})=\begin{pmatrix}
                                        0 & 0 & 0 & \frac{1}{2} \\
                                         0& 0  & -\frac{1}{6} & -\frac{\sqrt{3}}{4} \\
                                        0 & -\frac{1}{6} & \frac{1}{2} & 0 \\
                                        \frac{1}{2} & -\frac{\sqrt{3}}{4} & 0 & 0 \\
                                      \end{pmatrix}.
\end{equation*}
Using \eqref{eigenvalues} and \eqref{singular values for deg 3}, we get the singular values of $S(\frac{1}{2},\frac{1}{3})$ to be
\begin{equation*}
 \sigma_{1}=\frac{\sqrt{19}}{12}-\frac{1}{3},\quad\sigma_{2}=\frac{1}{2},\quad
 \sigma_{3}=\frac{2}{3},\quad\sigma_{4}=\frac{\sqrt{19}}{12}+\frac{1}{3}.\\
\end{equation*}
Choose $a_j\in[0,\frac{\pi}{4}]$ such that $\cos2a_{j}=\sigma_{j},~1\leq j\leq 4$, and define $V_{\vec{\sigma}}$
as in \eqref{jm alpha0}. To the real symmetric matrix $S(\frac{1}{2},\frac{1}{3})$, a group of eigenvectors $v_1,v_2,v_3,v_4$ corresponding to the eigenvalues $-\sigma_{1},\,\sigma_2,\,-\sigma_3,\,\sigma_{4}$, respectively, is
\begin{equation*}
\aligned
&^{t}v_{1}=\begin{pmatrix}
        1 &
        \frac{4\sqrt{57}}{27}+\frac{\sqrt{3}}{54} &
        -\frac{\sqrt{57}}{27}+\frac{10\sqrt{3}}{27} &
        -\frac{\sqrt{19}}{6}+\frac{2}{3} &
      \end{pmatrix},~~^{t}v_{2}=\begin{pmatrix}
        1 &
       0 &
        -\frac{3\sqrt{3}}{2} &
        1
      \end{pmatrix},\\
      &^{t}v_{3}=\begin{pmatrix}
        1 &
       -\frac{14}{27}\sqrt{3} &
        -\frac{2\sqrt{3}}{27} &
        -\frac{4}{3}
      \end{pmatrix},~~^{t}v_{4}=\begin{pmatrix}
        1 &
        -\frac{4\sqrt{57}}{27}+\frac{\sqrt{3}}{54} &
        \frac{\sqrt{57}}{27}+\frac{10\sqrt{3}}{27} &
        \frac{\sqrt{19}}{6}+\frac{2}{3}
      \end{pmatrix}.
\endaligned
\end{equation*}
Set $U\in U(4)$ to be a unitary matrix whose rows from top to bottom are
   $$\sqrt{-1}\;^{t}v_{1}/|v_{1}|,\quad
   ^{t}v_{2}/|v_{2}|,\quad
    \sqrt{-1}\;^{t}v_{3}/|v_{3}|,\quad
   ^{t}v_{4}/|v_{4}|.$$
It satisfies $^{t}U\diag\{\sigma_{1},\sigma_{2},\sigma_{3},\sigma_{4}\}U=S(\frac{1}{2},\frac{1}{3}),$ so that $V_{\vec{\sigma}}U Z_{3}$ is a holomorphic $2$-sphere of degree $3$ and constant curvature in $\mathcal{Q}_{6}$.
\end{example}


\section{More geometry of minimal 2-spheres constructed by the SVD method}\label{sec-prob}


The main objective of this section is to discuss the two questions of Peng, Xu and Wang, raised in~\cite[Section 3,~p.~459]{PengWangXu} and mentioned in the introduction, as to construct nonhomogenous minimal as well as totally real minimal 2-spheres of constant curvature in $\mathcal{Q}_{n-1}$, such that they are also minimal in $\mathbb{C}P^{n}$.

As seen in the preceding sections, all constantly curved 2-spheres minimal in both $\mathcal{Q}_{n-1}$ and $\mathbb{C}P^{n}$ can be constructed by the SVD method, for which the norm of the second fundamental form $||B||$ is computed in this section. Then we show that $||B||$ is not generally constant, so that the generic 2-sphere constructed this way is not homogenous in $\mathcal{Q}_{n-1}$.


Recall the decomposition of $Sym_{d+1}(\mathbb{C})$ into $SU(2)$-invariant subspaces (see the discussion around \eqref{general projection} and \eqref{important subsapce of symm matrices}), and the definition of the set $\mathbf{H}_{d,n,p}$ in \eqref{rnc in hyperquadric}. The following is essentially Proposition \ref{quadric contains rnc and decomposition} with the promised proof.
\begin{prop}\label{summary theorem}
Let $d,n,p$ be three integers satisfying $1\leq d\leq n,~~0\leq p\leq [\frac{d}{2}]$. Consider a $2$-sphere given by $EZ_{d,p}$, where $E\in M(n+1,d+1)$ satisfying $E^{*}E=Id$. Let $S:=~^{t}\!EE$ and let the entries of $S$ be $s_{ij}$, where $0\leq i,j\leq d$, and $s_{ij}=s_{ji}$. Then the following are equivalent.

 {\bf (1)} The $2$-sphere $EZ_{d,p}$ lies in the hyperquadric $\mathcal{Q}_{n-1}$.


 {\bf (2)} $^{t}Z_{d,0}\, S\, Z_{d,l}=0,~0\leq l\leq 2p+1$.

{\bf (3)} $^{t}Z_{d,k}\, S\, Z_{d,l}=0,~0\leq k+l\leq 2p+1$.

{\bf (4)} The $2$-sphere $EZ_{d,k}$ lies in the hyperquadric $\mathcal{Q}_{n-1}$ for all $0\leq k\leq p$.

{\bf (5)} $\sum_{i,j=0}^{d}s_{ij}~j^{2k}\sqrt{\tbinom{d}{i}\tbinom{d}{j}}~z^{i+j-2k}=0,~0\leq k\leq p$.

 {\bf (6)} $S\in \ker \Pi_{0}\cap\cdots\cap\Pi_{p}$.
\end{prop}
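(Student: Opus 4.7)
The six conditions all encode the same geometric fact that $S={}^{t}EE$ makes each lift $EZ_{d,k}$ for $0\le k\le p$ land in $\mathcal{Q}_{n-1}$, rephrased as diagonal derivative identities, cross-term identities, an explicit coefficient system, and a representation-theoretic projection condition, respectively. The plan is to establish $(3)\Leftrightarrow(4)\Leftrightarrow(1)$ and $(3)\Leftrightarrow(2)$ by differentiating $\,{}^{t}Z_{d,k}\,S\,Z_{d,k}=0$, to obtain $(5)$ from $(4)$ by direct coefficient expansion, and to derive $(6)$ from $(1)$ using $SU(2)$-representation theory.

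For the differential implications, $(3)\Rightarrow(4)\Rightarrow(1)$ and $(3)\Rightarrow(2)$ are immediate by restriction. For $(1)\Rightarrow(4)$, I apply $\partial/\partial\bar{z}$ to $\,{}^{t}Z_{d,p}\,S\,Z_{d,p}=0$ and use \eqref{facts of veronese sequence}, which yields $\,{}^{t}Z_{d,p-1}\,S\,Z_{d,p}=0$ since the coefficient $-p(d-p+1)/(1+|z|^{2})^{2}$ is nonzero; iterating produces $\,{}^{t}Z_{d,j}\,S\,Z_{d,p}=0$ for all $0\le j\le p$. Coupling these with $\partial/\partial z$ via \eqref{recursive formulas}, where $\partial Z_{d,m}/\partial z=Z_{d,m+1}+(\partial_{z}\log|Z_{d,m}|^{2})Z_{d,m}$, and cancelling the vanishing pieces already established, one peels off successive cross terms; a straightforward induction on the index pair $(k,l)$ fills in all of (3), whose diagonal specialization recovers (4). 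For $(2)\Rightarrow(3)$, the same recursion produces $\,{}^{t}Z_{d,1}\,S\,Z_{d,l}+{}^{t}Z_{d,0}\,S\,Z_{d,l+1}=0$; the second term vanishes by (2) for $l\le 2p$, so $\,{}^{t}Z_{d,1}\,S\,Z_{d,l}=0$ in that range, and induction on the first index fills out $k+l\le 2p+1$.

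The equivalence $(4)\Leftrightarrow(5)$ is a direct computation: substitute \eqref{rational normal curve in affine chart} into \eqref{recursive formulas} to obtain each $Z_{d,k}$, form $\,{}^{t}Z_{d,k}\,S\,Z_{d,k}$, clear the common $(1+|z|^{2})$-factor, and equate the coefficients of the resulting monomials in $z$ to zero. For $(1)\Leftrightarrow(6)$, observe that $\mathscr{S}_{d,p}$ is $SU(2)$-invariant, because $Z_{d,p}$ transforms equivariantly under $\rho^{d/2}$. By the multiplicity-free decomposition of Lemma~\ref{symmetric matrix space}, $\mathscr{S}_{d,p}$ is then a direct sum of some of the summands $\mathcal{V}^{d-2k}$. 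Formula \eqref{projection to the first summand} gives $\mathscr{S}_{d,0}=\ker\Pi_{0}$. For higher $p$, the map sending $S$ to the function $[u,v]\mapsto{}^{t}Z_{d,p}([u,v])\,S\,Z_{d,p}([u,v])$ is $SU(2)$-equivariant, so Schur's lemma forces its kernel, restricted to each irreducible summand, to be either all or nothing. A direct test on a highest-weight vector shows $\mathcal{V}^{d-2k}\not\subseteq\ker$ for $k\le p$ and $\mathcal{V}^{d-2k}\subseteq\ker$ for $k>p$, identifying $\mathscr{S}_{d,p}=\ker\Pi_{0}\cap\cdots\cap\ker\Pi_{p}$.

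The main obstacle lies in this last Schur-theoretic step: while the multiplicity-free structure narrows $\mathscr{S}_{d,p}$ to a direct sum of certain irreducibles, pinning down which summands drop out requires either an explicit realization of each $\Pi_{k}$ (for instance as a transvectant of $\,{}^{t}Z_{d}\,S\,Z_{d}$ with derivative operators) or the construction of explicit highest-weight witnesses in each $\mathcal{V}^{d-2k}$ whose image under $S\mapsto {}^{t}Z_{d,p}\,S\,Z_{d,p}$ is demonstrably nonzero; matching dimensions serves as a cross-check. The differential arguments, by contrast, reduce to routine applications of \eqref{recursive formulas} and \eqref{facts of veronese sequence}.
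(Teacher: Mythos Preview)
Your overall strategy---propagating vanishing of ${}^{t}Z_{d,k}\,S\,Z_{d,l}$ via~\eqref{recursive formulas} and~\eqref{facts of veronese sequence}, then handling (6) separately---is the paper's. But your $(1)\Rightarrow(3)$ step has a concrete gap. You claim that iterating $\partial/\partial\bar{z}$ alone yields ${}^{t}Z_{d,j}\,S\,Z_{d,p}=0$ for all $j\le p$. The first step does give ${}^{t}Z_{d,p-1}\,S\,Z_{d,p}=0$; but applying $\partial/\partial\bar{z}$ again to this relation produces, by the product rule and~\eqref{facts of veronese sequence}, a nontrivial linear combination of ${}^{t}Z_{d,p-2}\,S\,Z_{d,p}$ and ${}^{t}Z_{d,p-1}\,S\,Z_{d,p-1}$. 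The second term is precisely the $(4)$-type diagonal statement you are trying to establish, so you cannot isolate the first. The paper's remedy is a \emph{zigzag}: from ${}^{t}Z_{d,p}\,S\,Z_{d,p}=0$ apply $\partial/\partial z$ to raise the second index, then $\partial/\partial\bar{z}$ to lower the first, alternating to reach ${}^{t}Z_{d,0}\,S\,Z_{d,2p+1}=0$; at each step the extra product-rule terms are ones already shown to vanish. Because $\partial_{\bar z}Z_{d,0}=0$, repeated $\partial/\partial\bar{z}$ then yields (2), and your $(2)\Rightarrow(3)$ argument finishes the chain.

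For (6), your Schur-lemma route is genuinely different from the paper's. The paper proves $(5)\Leftrightarrow(6)$ by restricting the equations in (5) to each anti-diagonal $i+j=t$ and showing that the resulting linear system has full rank, via a factorization into a lower-triangular matrix times a Vandermonde matrix; a $J_{3}$-eigenvalue multiplicity count then forces the solution space to coincide with $\mathcal{V}^{d-2p-2}\oplus\cdots\oplus\mathcal{V}^{r}$. Your approach is more conceptual but, as you note yourself, still needs explicit witnesses in each $\mathcal{V}^{d-2k}$ with $k\le p$ that are not annihilated by $S\mapsto{}^{t}Z_{d,p}\,S\,Z_{d,p}$---that is where the content lies, and you have not supplied it. The paper's Vandermonde argument handles this in one stroke at the level of coefficients. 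Incidentally, your $(4)\Leftrightarrow(5)$ is also workable but messier than the paper's $(2)\Rightarrow(5)$, since ${}^{t}Z_{d,k}\,S\,Z_{d,k}$ involves $\bar z$ for $k\ge1$, whereas ${}^{t}Z_{d,0}\,S\,\partial_{z}^{\,l}Z_{d,0}$ is a polynomial in $z$ alone.
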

 %

\begin{proof} (sketch)
Note that $(3)\Rightarrow (4)\Rightarrow (1)$ is clear. So, it suffices to show $(1)\Rightarrow (2)\Rightarrow(3),~(2)\Rightarrow (5)\Rightarrow (1)$, and $(5)\Leftrightarrow (6)$.

{\bf (1) $\Rightarrow$ (2)} Taking $\frac{\partial}{\partial z}$ on both sides of $^{t}Z_{d,p}\, S\,Z_{d,p}=0$, by \eqref{recursive formulas}, we get
\begin{equation}\label{induction step}
^{t}Z_{d,p}\, S\, Z_{d,p+1}=0.
\end{equation}
Taking $\frac{\partial}{\partial \bar{z}}$ on both sides of \eqref{induction step}, by \eqref{facts of veronese sequence} we see $^{t}Z_{d,p-1}\, S\, Z_{d,p+1}=0$.
Then taking $\frac{\partial}{\partial z}$ again yields $^{t}Z_{d,p-1}\, S\cdot Z_{d,p+2}=0$. 
Continuing the process $p$ times, we arrive at
\begin{equation}\label{final get}
^{t}Z_{d,0}\, S\, Z_{d,2p+1}=0.
\end{equation}
Then taking $\frac{\partial}{\partial \bar{z}}$ on \eqref{final get} for $2p+1$ times, we get (3) by \eqref{facts of veronese sequence}.

{\bf (2) $\Rightarrow$ (3)} Assuming $(2)$, then taking $\frac{\partial}{\partial z}$ on both sides, we get
$^{t}Z_{d,1}\, S\, Z_{d,l}=0,$
where $0\leq l\leq 2p$. Then by induction, we obtain (3).

{\bf (2) $\Rightarrow $ (5)} From $^{t}Z_{d,0}\, S\, Z_{d,l}=0,~0\leq l\leq 2p+1$, we deduce
\begin{equation*}
\sum_{i,j=0}^{d}s_{ij}~j(j-1)\cdots(j-l+1)\sqrt{\tbinom{d}{i}\tbinom{d}{j}}~z^{i+j-l}=0,~~~0\leq l\leq 2p+1.
\end{equation*}
Then expand
$j(j-1)\cdots(j-l+1)$
and note that the coefficients before $j^{k},~0\leq k\leq l$, are all non-zero.

{\bf (5) $\Rightarrow$( 1)} We use induction to prove it. The claim is right for $p=0$. Assume it is right for $0\leq k\leq p-1$. Then by the induction assumption, $(5)$ implies that
$^{t}Z_{d,0}\, S \, Z_{d,l}=0,$
for $0\leq l\leq 2p$. By an argument similar to that in $(3)\Rightarrow(4)$, we get $^{t}Z_{d,p}\, S \, Z_{d,p}=0$, which is equivalent to $(1)$.

{\bf (5) $\Leftrightarrow$ (6)} Consider the induced action of $\mathfrak{su}(2)$ on $Sym_{d+1}(\mathbb{C})$ and extend it to the action of $\mathfrak{sl}(2;\mathbb{C})$. For $t:=i+j$ fixed, where $0\leq t\leq 2d$, i.e., considering the entries on each anti-diagonal, we claim that the system of linear equations in (5) are linearly independent. We then count the multiplicity of the eigenvalues of $J_{3}:=\diag(-1,1)/2\in \mathfrak{sl}(2;\mathbb{C})$.


To prove the claim,
 if $t=2m$, set $a_{ij}=s_{ij}\sqrt{\tbinom{d}{i}\tbinom{d}{j}}$. Consider the following system of linear equations
  $$
 2\sum_{0\leq i\leq m-1}a_{i\,2m-i}+a_{m\, m}=0,\quad
 2\sum_{0\leq i\leq m-1}a_{i\, 2m-i}~[i^{2k}+(2m-i)^{2k}]+a_{m\, m}~m^{2k}=0,
$$
for $1\leq k\leq m$. We need only show that the associated $(m+1)\times(m+1)$ coefficient matrix of the unknowns $a_{ij}$ is nonsingular, which follows from the observation that it is a product of a lower triangular and a Vandermonde matrix. 

If $t=2m+1$, a similar argument to that in the preceding case takes care of the following system of linear equations,

$
 \sum_{0\leq i\leq m}a_{i\, 2m+1-i}=0,\quad \quad
 \sum_{0\leq i\leq m}a_{i\, 2m+1-i}\,[i^{2k}+(2m+1-i)^{2k}]=0,~1\leq k\leq m.
$

\end{proof}
For minimal 2-sphere $EZ_{d,p}\in{\mathbf H}_{d,n,p}$, 
we have 
$ds^{2}=\lambda^{2}dzd\bar{z}$ (see Subsections~\ref{2.2} and~\ref{2.4}),
where $\lambda=\sqrt{d+2p(d-p)}/(1+|z|^{2})$, and
\begin{equation*}
X=\frac{1}{\lambda}\frac{EZ_{d,p+1}}{|Z_{d,p}|},~~Y=-\frac{|Z_{d,p}|}{\lambda |Z_{d,p-1}|^{2}}EZ_{d,p-1}.
\end{equation*}
Recall Subsection~\ref{2.2}, the Gaussian curvature $K_{d,p}$ is $4/(d+2p(d-p))$ and the K\"ahler angle $\theta_{d,p} \in [0,\pi]$ satisfies $\cos\theta_{d,p}=(d-2p)/(2p(d-p)+d)$. From item (5) in Theorem \ref{summary theorem}, we know $Y$ lies in the hyperquadric $\mathcal{Q}_{n-1}$, so $\tau_{Y}=|^{t}Y\,Y|=0$. Since this 2-sphere is minimal in both $\mathcal{Q}_{n-1}$ and $\mathbb{C}P^{n}$, Theorem~\ref{minimal in cpn and qn-1} implies $\tau_{XY}=|^{t}\!X\,Y|=0$. It follows from \eqref{formula of norm of second fundamental form} that the norm of the second fundamental form satisfies
\begin{equation}
||B||^{2}=-2K_{d,p}+2+6\cos^{2}\theta_{d,p}-4\tau^{2}_{X}.
\end{equation}
Since for $Z_{d,p}$, the Gaussian curvature $K_{d,p}$ and the K\"ahler angle $\theta_{d,p}$ are constant, $||B||^{2}$ is constant if and only if $\tau_{X}$ is constant. By a direct computation,
  \begin{equation*}
\tau_{X}=|^{t}\!X\,X|=\frac{(d-p)!}{(d+2p(d-p))\cdot d!p!}\frac{|^{t}Z_{d,p+1}SZ_{d,p+1}|}{(1+|z|^{2})^{d-2p-2}},
\end{equation*}
where $S=~^{t}\!EE$. Following the argument of Theorem \ref{summary theorem} and by \eqref{local sections equivalent}, we derive
\begin{align}\label{final computations}
\begin{split}
|^{t}Z_{d,p+1}SZ_{d,p+1}|&=|^{t}Z_{d,0}S\frac{\partial^{2p+2}}{\partial z^{2p+2}}Z_{d,0}|,\\
 &=|\sum_{i,j=0}^{d}s_{ij}~\sqrt{\tbinom{d}{i}\tbinom{d}{j}}z^{i+j-(2p+2)}j(j-1)\cdots(j-2p-1)|.
\end{split}
\end{align}
\begin{theorem}\label{non homogenous thm}
 Suppose $EZ_{d,p}\in\mathbf{H}_{d,n,p}$ is a linearly full minimal $2$-sphere. If we set $S:=~^{t}\!EE\in \mathbf{S}_{d,n,p}$, then we have the following.

 {\bf (1)}  If $1\leq d\leq 2$, then $||B||^{2}$ is constant.

 {\bf (2)} If $p=[\frac{d}{2}]$, then $S=0$, $||B||^{2}$ is constant, $2d+1= n$, and
 $\mathbf{H}_{d,2d+1,[\frac{d}{2}]}/O(2d+2;\mathbb{R})$
is a singleton set, given by $[V_{\vec{0}}Z_{d,[\frac{d}{2}]}]$, where $\vec{0}\in\mathbb{R}^{d+1}$ is the zero vector.  

 {\bf (3)} If $d$ is even and $p=[\frac{d}{2}]-1$, then $||B||^{2}$ is constant.

 {\bf (4)} If $d\geq3$, $0\leq p<[\frac{d}{2}]$, and $p\neq [\frac{d}{2}]-1$ when $d$ is even, then $||B||^{2}$ is constant, if and only if, $EZ_{d,p+1}\in \mathbf{H}_{d, n, p+1}$.


\end{theorem}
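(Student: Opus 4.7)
The plan is to reduce the theorem to a single elementary fact about the modulus of a holomorphic polynomial divided by a power of $1+|z|^2$. Since $K_{d,p}$ and $\theta_{d,p}$ are constants by Subsection~\ref{2.2}, the formula $\|B\|^2=-2K_{d,p}+2+6\cos^2\theta_{d,p}-4\tau_X^2$ reduces constancy of $\|B\|^2$ to constancy of $\tau_X$. From~\eqref{final computations} one writes
$$\tau_X \;=\; c_{d,p}\,\frac{|Q(z)|}{(1+|z|^2)^{M}},\qquad M:=d-2p-2,$$
where $Q(z):={}^tZ_{d,0}\,S\,\partial_z^{2p+2}Z_{d,0}$ is a polynomial in $z$ alone of degree at most $2(d-p-1)$, and $c_{d,p}>0$.

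The core step is the following elementary lemma that I would prove first: if $M\geq 1$ and $Q(z)=\sum_k a_kz^k$ is any holomorphic polynomial, then $|Q(z)|^2/(1+|z|^2)^{2M}$ is constant on $\mathbb{C}$ only when $Q\equiv 0$. Indeed, an identity $|Q|^2=c(1+|z|^2)^{2M}$ forces all mixed terms $a_k\bar a_l z^k\bar z^l$ with $k\neq l$ to vanish, so $Q$ must be a monomial; but $|a_k|^2|z|^{2k}$ cannot match the multi-term expansion $c(1+|z|^2)^{2M}$ when $M\geq 1$ unless $c=0$.

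With this lemma, the four assertions become a case tally on the sign of $M$ together with the dimension of $\mathscr{S}_{d,p}$. For (1), $d\leq 2$ forces $M\leq 0$ for every allowable $p$: either $\partial_z^{2p+2}Z_{d,0}=0$ (when $2p+2>d$), making $Q\equiv 0$, or $M=0$, making $Q$ a constant; $\tau_X$ is constant either way. For (2), Proposition~\ref{quadric contains rnc and decomposition} gives $\mathscr{S}_{d,[d/2]}=\{0\}$, so $S=0$ and $\tau_X=0$; since the zero matrix has no singular value equal to $1$, the constraint in the definition of $\mathbf{S}_{d,n,[d/2]}$ forces $2d+1-n\leq 0$, and together with linear fullness $n\leq 2d+1$ this pins down $n=2d+1$. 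The singleton description of the moduli then follows from Theorem~\ref{simple classification} applied to $\mathbf{S}_{d,2d+1,[d/2]}=\{0\}$.

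For (3), $M=0$ and $\mathscr{S}_{d,d/2-1}\cong\mathcal{V}^0$ is one-dimensional; the unique (up to scaling) generator is $SU(2)$-invariant, and the $SU(2)$-equivariance of the assignment $S\mapsto Q$ makes $|Q(z)|^2$ an $SU(2)$-invariant function on $\mathbb{C}P^1$, hence a constant. (Equivalently, a direct computation using $\partial_z^dZ_{d,0}=d!\,e_d$ together with the anti-diagonal alternating-sign form of the $\mathcal{V}^0$-generator shows $Q$ equals the scalar $d!\,s_{0,d}$.) For (4), a routine arithmetic check in both parities of $d$, invoking the exclusion $p\neq[d/2]-1$ when $d$ is even, gives $M\geq 1$; the key lemma then converts ``$\tau_X$ constant'' into $Q\equiv 0$, i.e.\ ${}^tZ_{d,p+1}SZ_{d,p+1}=0$, which by Proposition~\ref{summary theorem}(1) applied at parameter $p+1$ is exactly the condition $EZ_{d,p+1}\in\mathbf{H}_{d,n,p+1}$. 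The main obstacle is handling case (3) cleanly, where the monomial alternative within the lemma's proof is precisely what $SU(2)$-invariance (or the explicit computation) rules out.
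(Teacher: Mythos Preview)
Your approach follows the paper's closely: reduce constancy of $\|B\|^2$ to constancy of $\tau_X$, and then analyze $|Q(z)|/(1+|z|^2)^M$ case by case. The only genuine methodological difference is your core lemma for (4): you argue by comparing coefficients of $z^k\bar z^l$ in $|Q|^2=c(1+|z|^2)^{2M}$, forcing $Q$ to be a monomial and then zero, whereas the paper invokes unique factorization in $\mathbb{C}[z,\bar z]$ and the irreducibility of $1+z\bar z$. Your argument is cleaner and avoids the UFD machinery; both are valid. Your $SU(2)$-invariance argument for (3) is also a nice alternative to the paper's direct anti-diagonal computation, though your parenthetical direct calculation is exactly what the paper does.

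There is, however, a small gap in your handling of (1). You claim ``$M=0$, making $Q$ a constant,'' but your own stated degree bound is $\deg Q\le 2(d-p-1)$, which for $d=2$, $p=0$ gives $\deg Q\le 2$, not $0$. The correct degree bound is $\deg Q\le 2M$, and this does \emph{not} follow from the naive count on ${}^tZ_{d,0}\,S\,\partial_z^{2p+2}Z_{d,0}$ alone; it requires the constraint $S\in\mathscr{S}_{d,p}$. You can fix this in one of two ways: either observe that $\tau_X=|{}^tXX|\le |X|^2\le 1$ is globally bounded, so when $M=0$ the holomorphic polynomial $Q$ is bounded and hence constant by Liouville; or compute directly that for $d=2$, $p=0$ the constraints $s_{12}=s_{22}=0$ coming from ${}^tZ_{2}SZ_{2}=0$ kill the higher-degree terms of $Q$. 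The paper simply says ``by a direct computation using~\eqref{final computations},'' which amounts to the latter.
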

\begin{proof}
 {\bf  (1)} By a direct computation using \eqref{final computations}.

 {\bf (2)} If $p=[\frac{d}{2}]$, then by taking conjugation, it follows from Proposition~\ref{summary theorem} that $EZ_{d,q}$ lies in $\mathbf{H}_{d,n,q}$ for any $0\leq q\leq d$. We conclude $S=0$, which implies all its singular values vanish.  Combining this with the linearly full assumption, we have $2d+1= n$ by Theorem~\ref{coro-dim}. Substituting $S=0$ into \eqref{final computations}, we have $|\tau_X|=0$ and hence $||B||^2$ is a constant.  That the moduli space $\mathbf{H}_{d,2d+1,[\frac{d}{2}]}/O(2d+2;\mathbb{R})$ is a singleton set follows from $\mathbf{S}_{d,2d+1,[\frac{d}{2}]}=\{[0]\}$ by using Theorem \ref{simple classification}.


{\bf (3)}  If $d$ is even, and $p=\frac{d}{2}-1$, then the only possible nonvanishing entries of $S$ are on the main anti-diagonal, i.e. $i+j=d$. So, $(1+|z|^{2})^{d-2p-2}=1$ and \eqref{final computations} is constant, which implies $|\tau_{X}|$ is constant and then $||B||^2$ is constant.

{\bf (4)} In this case, we need only show that $\tau_{X}$ is constant if and only if it is zero. For convenience, denote the polynomial $^{t}Z_{d,0}\,S\,\frac{\partial^{2p+2}}{\partial z^{2p+2}}Z_{d,0}$ in \eqref{final computations} by $g(z)$. This is a polynomial of $z$. Note that $\tau_{X}$ is zero if and only if $g(z)=0$.

If $\tau_{X}$ is constant, then we have
\begin{equation*}
|g(z)|^{2}/(1+|z|^{2})^{2d-4p-4}=c,
\end{equation*}
for some constant $c$. So $c(1+|z|^{2})^{2d-4p-4}=g(z)\cdot \overline{g(z)}$. If $d\geq 3,~0\leq p<[\frac{d}{2}]$, and $p\neq \frac{d}{2}-1$ when $d$ is even, then $2d-4p-4>0$. If $c\neq 0$, then
$|g(z)|^{2}$ is divided by $1+z\bar{z}$. It is well known that $\mathbb{C}[z,\bar{z}]$ is a unique factorization domain. 
Since $1+z\bar{z}$ is irreducible in $\mathbb{C}[z,\bar{z}]$, by unique factorization, either $g(z)$ or $\overline{g(z)}$ is divisible by $1+|z|^{2}$, say, $g(z)$. It leads to a contradiction by a degree count of $\bar{z}$ in $1+z\bar{z}$ and $g(z)$. So $c=0$, which implies $|\tau_X|^2=0$.

\end{proof}

Now we can answer the question raised by Peng, Wang and Xu, mentioned in {\bf Problem 1} said in the introduction as follows.
\begin{coro}
Let $d,n,p$ be three integers satisfying $3\leq d\leq n,~0\leq p<[\frac{d}{2}]$, and $p\neq [\frac{d}{2}]-1$ when $d$ is even. Then the generic minimal $2$-sphere $EZ_{d,p}$ in $\mathbf{H}_{d,n,p}$ is not homogenous.
\end{coro}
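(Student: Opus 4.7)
\medskip

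\noindent\textbf{Proof proposal.} The plan is to reduce non-homogeneity to a genericity statement about the symmetric matrix $S:={}^t\!EE$, then invoke Theorem~\ref{non homogenous thm}(4) together with the algebraic structure of $\mathscr{S}_{d,p}$ from Proposition~\ref{quadric contains rnc and decomposition}. First I would note that any minimal $2$-sphere homogeneous in $\mathcal{Q}_{n-1}$ has constant norm of the second fundamental form $\|B\|^2$, since $\|B\|^2$ is an isometric invariant and the isotropy action is transitive. Hence it suffices to exhibit a Zariski-open dense subset of $\mathbf{H}_{d,n,p}$ on which $\|B\|^2$ is non-constant.

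Under the hypotheses $3\leq d$, $0\leq p<[d/2]$, and $p\neq [d/2]-1$ when $d$ is even, Theorem~\ref{non homogenous thm}(4) asserts that $\|B\|^2$ is constant if and only if $EZ_{d,p+1}\in \mathbf{H}_{d,n,p+1}$. By Proposition~\ref{summary theorem}, this is in turn equivalent to $S\in \mathbf{S}_{d,n,p+1}$. Thus my task reduces to showing that in $\mathbf{S}_{d,n,p}\cong \mathbf{H}_{d,n,p}/O(n+1;\mathbb{R})\cdot U(1)\times SU(2)$ (via Theorem~\ref{simple classification}), the subset $\mathbf{S}_{d,n,p+1}$ is proper; equivalently, the extra linear condition $\Pi_{p+1}(S)=0$ is not implied by membership in $\mathbf{S}_{d,n,p}$.

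The key step invokes Proposition~\ref{quadric contains rnc and decomposition}: $\mathscr{S}_{d,p+1}=\ker\Pi_{0}\cap\cdots\cap\ker\Pi_{p+1}$ sits in $\mathscr{S}_{d,p}$ as a linear subspace of codimension $\dim\mathcal{V}^{d-2p-2}=d-2p-1$. The exclusions in the hypotheses are exactly those that guarantee $d-2p-1\geq 1$: for $d$ odd one has $p\leq (d-3)/2$, hence $d-2p-1\geq 2$; for $d$ even the condition $p\leq d/2-2$ gives $d-2p-1\geq 3$. Therefore $\mathscr{S}_{d,p+1}\subsetneq \mathscr{S}_{d,p}$ is a proper linear subspace, so $\Pi_{p+1}$ restricted to $\mathscr{S}_{d,p}$ is a nonzero linear functional (valued in a nonzero representation). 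Since $\mathbf{S}_{d,n,p}$ is a semialgebraic subset of $\mathscr{S}_{d,p}$ defined by open-type singular-value conditions (maximal singular value equal to $1$ with multiplicity $\geq 2d+1-n$) that are stable under generic perturbation inside $\mathscr{S}_{d,p}$, the intersection $\mathbf{S}_{d,n,p}\cap \ker\Pi_{p+1}=\mathbf{S}_{d,n,p+1}$ is a proper closed subset of $\mathbf{S}_{d,n,p}$, and its complement is open and dense. Combined with Steps~1 and~2, this yields non-homogeneity for the generic sphere.

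The main obstacle is verifying rigorously that $\mathbf{S}_{d,n,p}\not\subset \mathscr{S}_{d,p+1}$ for every admissible $n$ in the range $d\leq n\leq 2d+1$; i.e., that some $S\in \mathbf{S}_{d,n,p}$ actually has $\Pi_{p+1}(S)\neq 0$. For $n=2d+1$ this is automatic since the multiplicity condition is vacuous and $\mathbf{S}_{d,2d+1,p}=\mathscr{S}_{d,p}$ up to the normalization of the maximal singular value. For smaller $n$, I would appeal to the explicit SVD constructions of Section~\ref{sec-const} (especially the construction in the proof of Theorem~\ref{thm-Hd} and Remark~\ref{construction}), producing $S\in \mathbf{S}_{d,n,p}$ with nontrivial higher-degree anti-diagonal entries that make $\Pi_{p+1}(S)\neq 0$. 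Once such examples are in hand, the genericity conclusion follows from the fact that a linear constraint cuts a semialgebraic set of positive dimension into a proper closed subset, and the complement is open and dense in $\mathbf{H}_{d,n,p}/O(n+1;\mathbb{R})$.
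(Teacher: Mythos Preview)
Your approach is essentially the same as the paper's: reduce to Theorem~\ref{non homogenous thm}(4), then observe that the constancy of $\|B\|^2$ forces $S\in\mathbf{S}_{d,n,p+1}$, which is a proper subset of $\mathbf{S}_{d,n,p}$ because $\mathscr{S}_{d,p+1}=\mathscr{S}_{d,p}\cap\ker\Pi_{p+1}$ has strictly smaller dimension by Proposition~\ref{quadric contains rnc and decomposition}. One small correction: in the paper's notation $\mathcal{V}^{k}$ is the space of degree-$2k$ polynomials (spin-$k$ representation), so $\dim_{\mathbb{C}}\mathcal{V}^{d-2p-2}=2d-4p-3$, not $d-2p-1$; the paper records the real codimension as $4d-8p-6$. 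This does not affect your positivity check under the stated hypotheses.

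Your extra caution about whether $\mathbf{S}_{d,n,p}\not\subset\mathscr{S}_{d,p+1}$ for every admissible $n<2d+1$ is a fair point, and the paper's one-line proof does not address it explicitly either; it simply asserts the codimension formula. Note, however, that the constructions you invoke from Section~\ref{sec-const} are written only for $p=0$, so appealing to them for general $p$ is not quite legitimate as stated. If the corollary is read as ``whenever $\mathbf{H}_{d,n,p}$ is nonempty, the generic element is non-homogeneous'', then the argument you and the paper both give suffices once one notes that the singular-value constraints defining $\mathbf{S}_{d,n,p}$ are independent of the linear constraint $\Pi_{p+1}=0$, so the intersection is proper in any stratum of positive dimension.
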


For instance, when $n=2d+1$, by item (4) of the preceding theorem, the corollary follows from the fact that $\mathbf{S}_{d,n,p+1}=\mathbf{S}_{d,n,p}\cap\ker\Pi_{p+1}$ is a proper subset of $\mathbf{S}_{d,n, p}$ with codimenison given by
\begin{equation}\label{difference}
\dim\mathbf{S}_{d,n,p}-\dim \mathbf{S}_{d,n,p+1}=4d-8p-6.
\end{equation}

Note that the constantly curved holomorphic 2-spheres of degree $3$ constructed in Section \ref{deg=3} are not homogeneous, if the parameters $(y,z)$ corresponding to them are not equal to $(0,0)$. 

More generally, observe that when $p=0$, item (4) of Theorem~\ref{non homogenous thm} says that the holomorphic 2-sphere of degree $d$ of constant curvature assumes constant $||B||^2$ if and only if the tangent developable surface ${\mathcal D}$ of the Veronese curve lies in the quadric defined by the symmetric matrix $S$. In Eisenbud~\cite{GreenConjecture}, regarding Green's conjecture, it is pointed out that if we let $z_0,\cdots,z_d$ be the homogeneous coordinates of ${\mathbb C}P^d$, $w_i=z_i/\binom{d}{i},0\leq i\leq d,$ and consider the $2\times d$ matrix
$$
\begin{pmatrix}w_0&w_1&\cdots&w_{d-1}\\w_1&w_2&\cdots&w_d\end{pmatrix},
$$
for which we let $\Delta_{a,b}, 0\leq a,b\leq d-3,$ be the quadric polynomials obtained by taking the determinant of the minor associated with the columns $a$ and $b$, then the quadratic equations of ${\mathcal D}$ are given by
$$
\Gamma_{a,b}:=\Delta_{a+2,b}-2\Delta_{a+1,b+1}+\Delta_{a,b+2}=0,\quad 0\leq a,b\leq d-3.
$$
Moreover, $\Gamma_{a,b}$ generate the ideal of ${\mathcal D}$ when $d\geq 6$.
The linear span of these $\Gamma_{a,b}$, of real dimension $2\binom{d-2}{2}=d^2-5d+6$,
constitutes the space of symmetric matrices containing ${\mathcal D}$, in agreement with $\dim\mathbf{S}_{d,n,1}$ and made explicit of the dimension given in~\eqref{difference} when $n=2d+1$ (note that $\dim \mathbf{S}_{d,2d+1,0}=d^2-d$).


Meanwhile, we present the following corollary of Theorem~\ref{non homogenous thm}, which relates to {\bf Problem 2} mentioned in the introduction. Note that the minimal surface is totally real if and only if its K\"ahler angle is $\pi/2$. For the Veronese maps, the only totally real case is $Z_{d,\frac{d}{2}}$, where $d$ is even; see the formula \eqref{eq-kahler} for $\cos\theta_{d,p}$.
\begin{coro}\label{cor-uniqueness}
A totally real linearly full minimal $2$-sphere of constant curvature $8/(d^2+2d)$ in $\mathcal{Q}_{n-1}$, such that it is also minimal in $\mathbb{C}P^{n}$, exists only when $d$ is even and $n=2d+1$. Moreover, it is unique up to isometric transformations of $\mathcal{Q}_{n-1}$.
\end{coro}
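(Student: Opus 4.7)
(Proposal.)
The plan is to combine the rigidity of constantly curved minimal 2-spheres in $\mathbb{C}P^n$ with item (2) of Theorem~\ref{non homogenous thm}. Let $\gamma:\mathbb{C}P^1\to\mathcal{Q}_{n-1}\subset\mathbb{C}P^n$ be such a 2-sphere, and let $D$ denote the projective dimension of its span. Viewed as a linearly full map into $\mathbb{C}P^D$, $\gamma$ is still minimal of constant curvature, so the rigidity Theorem~\ref{rigidity theorem} applies and forces $\gamma=U\cdot{}^t\rho^{D/2}(B)\cdot Z_{D,q}$ for some $U\in U(D+1)$, $B\in SU(2)$ and $0\leq q\leq D$.

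Next I would use the total reality and curvature conditions to pin down $D$ and $q$. Total reality means the K\"ahler angle equals $\pi/2$, so by~\eqref{eq-kahler} one has $\cos\theta_{D,q}=0$, i.e., $D=2q$. Plugging $D=2q$ into $K=4/(D+2q(D-q))=2/(q(q+1))$ and equating with the prescribed value $8/(d^2+2d)=8/(d(d+2))$ gives $d(d+2)=4q(q+1)$, equivalently $(d+1)^2=(2q+1)^2$. Hence $q=d/2$ and $D=d$; in particular $d$ must be even. Thus $\gamma$ lies in $\mathbf{H}_{d,n,d/2}$, the part of $\mathbf{Mini}_{d,n}$ corresponding to the Veronese index $p=d/2=[d/2]$.

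At this point item (2) of Theorem~\ref{non homogenous thm} applies directly. Because $p=[d/2]$, Proposition~\ref{quadric contains rnc and decomposition}(2) collapses $\mathscr{S}_{d,d/2}$ to the zero module, whence $S={}^t\!EE=0$; the linearly full hypothesis, together with Remark~\ref{rk-nonfull} and the shape of $V_{\vec{\sigma}}$ in \eqref{jm alpha}, then forces $n=2d+1$, and Theorem~\ref{simple classification} identifies $\mathbf{H}_{d,2d+1,d/2}/O(2d+2;\mathbb{R})$ with $\mathbf{S}_{d,2d+1,d/2}/(U(1)\times SU(2))=\{[0]\}$. Hence a representative is $V_{\vec{0}}\cdot Z_{d,d/2}$, which gives uniqueness up to isometric transformations of $\mathcal{Q}_{n-1}$.

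The main obstacle is the step from the analytic/representation-theoretic input (total reality together with the special curvature value) to the algebraic statement $\mathscr{S}_{d,d/2}=\{0\}$; once one recognizes that the prescribed curvature $8/(d^2+2d)$ is precisely the one realized by the totally real Veronese map $Z_{d,d/2}$, the SVD moduli description developed earlier in the paper closes the argument with essentially no further computation.
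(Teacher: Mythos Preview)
Your proposal is correct and follows essentially the same route as the paper: the preamble to the corollary already notes that total reality forces the Veronese index $p=D/2$ (so $D$ is even), and then item~(2) of Theorem~\ref{non homogenous thm} gives $S=0$, $n=2D+1$, and the singleton moduli space. Your argument simply makes explicit the identification $D=d$ via the curvature value $8/(d^2+2d)$, which the paper leaves to the reader; this extra step is necessary and your computation $(d+1)^2=(2q+1)^2$ is clean.
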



To conclude the paper, we propose two interesting questions for further study:
 
\textbf{Question 1}: In view of Remark \ref{d,d+2rmk} and Proposition \ref{Pr}, $\mathbf{H}_{d,d}$ and $\mathbf{H}_{d,d+1}$ are peculiar due to the relatively small degrees engaged. Can we understand their structures better? 
\\

\textbf{Question 2}:
Our work requires that the $2$-spheres be minimal in both the hyperquadric and ${\mathbb C}P^{n}$, while in \cite[p. 1022, (2)]{JiaoLiS^2inQn}, Jiao and Li found a constantly curved $2$-sphere which is only minimal in the hyperquadric. How to construct such $2$-spheres, minimal only in the hyperquadric, in a systematic way?
\\

\textbf{Acknowledgement}:
The second author is supported by NSFC No.11601513, the Fundamental Research Funds for Central Universities, and the CSC scholarship while visiting Washington University in St. Louis. The third author is supported by NSFC No.11871450 and acknowledges the support from the UCAS joint PhD Training Program. He would also thank Professor Xiaoxiang Jiao for guidance and suggestions. They both would thank the first author and the Department of Mathematics and Statistics at Washington University for warm hospitality during their visit.



\noindent Department of Mathematics and Statistics, Washington University, St. Louis, MO63130.\\
Department of Mathematics, China University of Mining and Technology (Beijing), Beijing 100083, China.\\
School of Mathematical Sciences, University of Chinese Academy of Sciences, Beijing 100049, China.\\
\noindent E-mail: chi@wustl.edu;~~~\phantom{,,}xiezhenxiao@cumtb.edu.cn;~~~\phantom{,,}xuyan2014@mails.ucas.ac.cn.

\end{document}